\documentclass[9pt,english]{article}
\usepackage{geometry}
\usepackage{amsmath,amsfonts, amsthm,amssymb,oldgerm,amssymb}
\usepackage{array}
\usepackage[english]{babel}
\usepackage{float}
\usepackage{url}
\usepackage{cite} 
\usepackage{hyperref}
\usepackage{xcolor} 
\usepackage{amssymb,mathrsfs,graphicx,amsmath,graphicx,amsthm}
\numberwithin{equation}{section}
\usepackage{mathtools}
\usepackage{array,multicol,comment,lipsum,caption}
\usepackage{color}
\usepackage{soul} 

\theoremstyle{theorem}
\newtheorem{theorem}{Theorem}[section]

\theoremstyle{definition}
\newtheorem{definition}{Definition}[section]

\theoremstyle{lemma}
\newtheorem{lemma}{Lemma}[section]

\theoremstyle{remark}
\newtheorem{remark}{Remark}[section]

\theoremstyle{proposition}

\newtheorem{corollary}[theorem]{Corollary}

\usepackage{epsfig}
\usepackage{relsize}
\usepackage{hyperref}
\usepackage[utf8]{inputenc}

\newcommand{\Sum}{\displaystyle\sum}

\newcommand{\R}{{\mathbb{R}}}

\DeclarePairedDelimiter\abs{\lvert}{\rvert}

\DeclarePairedDelimiterX{\inp}[2]{\langle}{\rangle}{#1, #2}

\newcommand{\mycomment}[1]{}

\makeatletter
\newcommand{\myitem}[1]{%
\item[#1]\protected@edef\@currentlabel{#1}%
}

\begin{document}
\title{\bf An Inverse Problem for the Moore–Gibson–Thompson Equation: Injectivity and Stability
}

\author{Benjam\'in Palacios\ \footnotemark[2] \hspace{.1cm }and Alexander Rodr\'iguez\footnotemark[3]}


\footnotetext[2]{Facultad de Matem\'aticas, Pontificia Universidad Católica de Chile, Avda.
Vicu\~na Mackenna 4860, Macul, Santiago, Chile. E.mail: benjamin.palacios@uc.cl}

\footnotetext[3]{Facultad de Matem\'aticas, Pontificia Universidad Católica de Chile, Avda.
Vicu\~na Mackenna 4860, Macul, Santiago, Chile. E.mail: jarodriguez19@uc.cl}

\date{\today}

\maketitle

\begin{abstract}
We study an inverse problem for the variable–coefficient Moore–Gibson–Thompson (MGT) equation in a bounded domain, where the objective is to recover an unknown initial acceleration from boundary measurements of the acoustic field. We establish both injectivity and stability of the associated measurement operator. Injectivity is established through a unique continuation property by applying a theorem of Tataru for analytically principally normal operators. The argument relies on a factorization of the third–order operator into wave-type components, combined with Carleman estimates and propagation techniques in the spirit of Tataru. Stability is derived through a microlocal analysis of the forward operator, based on the construction of geometric optics approximate solutions and the identification of an elliptic pseudodifferential operator arising from microlocal back–propagation of the data. These results connect the MGT inverse problem with the classical theory of hyperbolic inverse problems, such as photoacoustic tomography, while highlighting new features induced by the third–order dynamics and damping.
\end{abstract}

\section{Introduction}
The mathematical modeling of nonlinear acoustic wave propagation in viscous and thermally relaxing fluids has been the subject of sustained research for several decades, driven by applications in medical imaging, therapeutic ultrasound, industrial ultrasonics, and sonochemistry. Classical second-order models such as Kuznetsov’s equation, the Westervelt equation, and the KZK equation provide accurate descriptions in many regimes and have been extensively studied in both the physical and mathematical literature; see, for instance, \cite{kuznetsov1971equation,westervelt1963parametric,hamilton2024nonlinear} and the surveys by \cite{crighton1979model,coulouvrat1992equations}.\\

Let $\Omega \subset \mathbb{R}^n$ $(n \ge 2)$ be a bounded domain with smooth boundary $\partial\Omega$ and let 
$Q=\Omega\times(0,T)$ for some $T>0$. 
We consider the following third-order Moore--Gibson--Thompson (MGT) equation
\begin{equation}
\begin{cases}
\tau(x) u_{ttt} + \alpha(x) u_{tt} 
- c^{2}(x)\,\Delta u
- b(x) \Delta u_t=0,
& (x,t)\in Q,\\[1mm]
u(x,0)=0,\quad u_t(x,0)=0,\quad u_{tt}(x,0)=f(x),
& x\in\Omega,
\end{cases}
\tag{1.1}
\end{equation}
where the coefficients $\tau(x), b(x),$ 
$c(x)$ and $\alpha(x)$ are smooth real-valued functions. 
This PDE involves coefficients that depend on the physical properties of the medium and must therefore be treated as spatially varying functions. In applications such as ultrasound propagation in heterogeneous biological tissues \cite{greenleaf1974algebraic,dines1979ultrasonic,gemmeke2016,alaskar2021,javaherian2020}, the sound speed $c=c(x)$ is a key imaged quantity, while the sound diffusivity $b=b(x)$ accounts for attenuation effects and carries tissue-specific information. Additionally, the coefficient $\alpha=\alpha(x)$, multiplying the second-order time derivative, is linked to thermoviscous relaxation, attenuation, and material nonlinearity \cite{bjornum1986,cain1986,ichida1983,burov1994}. Allowing these coefficients to depend on the spatial variable provides a means of imaging material properties of the medium, for instance in ultrasound tomography.\\

Inverse problems associated with the Moore–Gibson–Thompson equation and its nonlinear version, the Jordan–Moore–Gibson–Thompson equation, have received increasing attention in recent years. Early contributions focused on the identification of time-dependent or space-dependent coefficients from boundary measurements, motivated by applications in ultrasound imaging. In particular, on \cite{liu2013} they studied the recovery of a time-dependent coefficient in a third-order acoustic equation by means of a single boundary observation. More recently, on \cite{kaltenbacher2021,kaltenbacher2022} they investigated several inverse problems for nonlinear acoustic models, including the identification of nonlinear damping parameters and space-dependent coefficients from boundary data.\\

Stability results for partial data inverse problems involving time-dependent viscosity and potential terms in the MGT equation were established in \cite{fuyaoyu2026} using Carleman estimates and geometric optics solutions. In addition, inverse problems for related third-order models have been addressed in the context of fractional and relaxation-type equations, where the recovery of coefficients or parameters was studied under various observation settings, see for instance \cite{kaltenbacher2022nikolic}. These works demonstrate that inverse problems for MGT-type equations are closely tied to unique continuation properties and require refined analytical tools adapted to the third-order time structure of the model.\\

Following the ideas given in \cite{UHLMANN}, we establish a unique continuation property for the MGT operator via Carleman estimates, extending continuation principles for hyperbolic equations with memory to a higher-order setting. Although the result requires conditions on the whole boundary rather than partial data, the main challenge stems from the lack of ellipticity caused by the third-order time structure and space–time coupling. To overcome this, we use a Tataru’s theorem based on analytic principal normality, verify that the MGT principal symbol satisfies this property, and thereby obtain unique continuation despite the non-elliptic framework. Motivated by the geometric microlocal approach given in \cite{StefanovUhlmann2011}, we further construct a parametrix and derive a stability estimate for the associated inverse problem.

\section{Mathematical Setting and Inverse Problem}

Let's define the variable coefficient Moore-Gibson-Thompson (MGT) operator
\begin{equation}\label{eq:MGT-operator}
Pu := \tau(x)\, u_{ttt} + \alpha(x)\, u_{tt} - c^2(x)\, \Delta u - b(x)\, \Delta u_t,
\end{equation}
with $c, \tau, b$ and $\alpha$ all smooth, bounded, real-valued functions, such that
\begin{equation}\label{hyp:lower_bounds}
\tau\geq\tau_0,\quad \alpha\geq \alpha_0,\quad c\geq c_0\quad b\geq b_0,
\end{equation}
for fixed constants $\tau_0,\alpha_0,c_0,\beta_0>0$, and consider the initial value problem
\begin{equation}\label{eq:forward}
\begin{cases}
Pu = 0, & \text{in } \R^n\times(0,T) , \\[6pt]
(u,\; u_t,\; u_{tt})|_{t=0} = \mathbf{f}, & \text{in } \R^n,
\end{cases}
\end{equation}
where $\mathbf{f}=(f_1,f_2,f_3)$ is a given initial condition supported inside the bounded region $\Omega$.

Given that the MGT equation originates from the Maxwell–Cattaneo law for heat conduction (see \cite{kaltenbacher2011wellposedness}), solutions to this equation possess the finite-speed-of-propagation property. Well-posedness for this system then follows from \cite{nicaise2020wellposedness} by considering null Dirichlet conditions on the boundary of a sufficiently large domain $\Omega' \supset \Omega$ (which depends on $T$). Indeed, for
\[
(f_1,f_2,f_3) \in H_0^1(\Omega') \times L^2(\Omega') \times L^2(\Omega'),
\]
there exists a unique solution $u$ satisfying
\[
u \in C([0,T]; H_0^1(\Omega')), \quad
u_t \in C([0,T]; L^2(\Omega')), \quad
u_{tt} \in C([0,T]; L^2(\Omega')),
\]
with continuous dependence on the initial data.\\

This paper concerns the study of the inverse problem that seeks to determine the initial source of waves from measurements acquired at the boundary of the region of interest $\Omega$, and for all time in the interval $(0,T)$. Let $\Gamma\subset\partial\Omega$ be an open subset of the boundary, which will represent the observation region, and let's denote
\[
Q := \Omega\times(0,T) \qquad \text{and}\qquad
\Sigma := \Gamma\times(0,T) .
\]
When measurements are available on the whole boundary we set $ \Gamma := \partial\Omega$, and we will refer to this as the complete data (inverse) problem. The observation map is the operator that assigns to each initial source $\mathbf{f}$, the Dirichlet boundary measurement
\begin{equation}\label{eq:observation}
\Lambda \mathbf{f} := u\big|_{\Sigma} .
\end{equation}

In the particular case of $\mathbf{f}= (0,0,f_3)$, the inverse problem consists in determining the initial acceleration $f_3$ from the knowledge of $\Lambda \mathbf{f}$. In this scenario, we will simply write $\Lambda f_3$.\\

We point out that the inverse problem considered here is closely related, both in formulation and in analytical structure, to the classical problem of photoacoustic tomography (PAT) for the wave equation. In PAT, one seeks to reconstruct an initial pressure distribution from acoustic boundary measurements governed by the wave equation, and this problem has been extensively studied in the mathematical literature with respect to uniqueness, stability, and reconstruction formulas; see, for example, the seminal works of Stefanov and Uhlmann on thermoacoustic and photoacoustic inversion with variable sound speed and partial data \cite{MR2519863,StefanovUhlmann2011}. These studies establish fundamental properties of the measurement operator in the undamped wave setting and demonstrate how boundary measurements can be used to obtain a unique determination of the initial condition, while stability estimates under partial data are obtained under suitable geometric conditions. The PAT inverse problem has also been extensively studied in the setting of attenuating media and for different types of attenuation models. See, for instance, \cite{Homan2013,BENJAMIN-ACOSTA-THERMO,Palacios2022,huang2023determination,acosta2025inverse}. Our work, then, can be viewed as an extension of this rich framework to the Moore–Gibson–Thompson setting, where both the higher-order dynamics and damping introduce additional analytical challenges. 

Recently, the microlocal analysis of waves with memory-type attenuation and its application to coefficient inverse problems has received considerable attention. Works along these lines include \cite{covi2026propagation,kian2026inverse,grebnev2026high,yu2026uniqueness}.

The main result of this paper establishes injectivity and a stability estimate for the measurement operator $\Lambda$. We will state it informally as follows.
\begin{theorem}\label{main-thm}
Let $T>0$ and let $\Omega \subset \mathbb{R}^n$ be an open and bounded domain with a smooth and strictly convex boundary for the metric $\frac{\tau}{b}dx^2$., and let $\mathcal{K}\subset\Omega$ compact subset. Assume that the domain $\Omega$ can be foliated by a particular family of smooth and strictly convex hypersurfaces, and
the supremum of the length of geodesic segments inside $\Omega$ and passing through $\mathcal{K}$ is less than $2T$.
Then, the observation map $\Lambda$ for the complete data problem (defined in \eqref{eq:observation}) is injective; that is,  
if $f_3 \in L^2(\Omega)$ and $\Lambda f_3 = 0$, then $f_3 \equiv 0$ in $\Omega$. Moreover, there is a constant $C_{\mathcal{K}}>0$ such that the following stability estimate holds:
\[
\| f_3 \|_{L^2(\mathcal{K})} \le C_{\mathcal{K}}\, \| \Lambda f_3 \|_{H^1(\partial\Omega\times (0,T))},
\qquad \forall f_3 \in L^2(\mathcal{K}).
\]
\end{theorem}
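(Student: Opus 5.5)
The plan has two parts, injectivity and stability, both built on the factorization of the principal part of $P$. For $P$ the principal symbol (order three in $(x,t,\xi,\sigma)$, with $\sigma$ dual to $t$) is
\[
p_3(x,\xi,\sigma) = -i\,\sigma\big(\tau(x)\sigma^2 - b(x)|\xi|^2\big) = -i\,\tau(x)\,\sigma\Big(\sigma^2 - \tfrac{b(x)}{\tau(x)}|\xi|^2\Big).
\]
So the characteristic set is the union of the hyperplane $\{\sigma=0\}$ and the light cone of the metric $g=\frac{\tau}{b}dx^2$. Formally $P=\big(\tau\partial_t + \alpha\big)\big(\partial_t^2 - \tfrac{b}{\tau}\Delta\big) + \text{lower order}$, where the first factor is a transport (ODE-in-time) factor and the second is a wave factor.

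\textbf{Injectivity.} Suppose $\Lambda f_3=0$. Extend $u$ by zero outside $\Omega$; this is legitimate because the boundary trace vanishes on $\partial\Omega\times(0,T)$. I would then show that the Neumann trace also vanishes. One route is to use the solution on $\R^n\setminus\overline\Omega$: it has zero Cauchy data at $t=0$ there and zero Dirichlet data on the boundary, so uniqueness for the exterior mixed problem gives $u\equiv 0$ outside $\Omega$.

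Next I extend $u$ oddly (or evenly) in $t$ to $(-T,T)$, using that the time-reversal structure of $P$ is controlled up to the lower-order terms. Then I apply Tataru's unique continuation theorem for operators that are analytically principally normal with respect to a pseudoconvex foliation. The foliation is given by the hypothesis: it consists of the level sets $\{\psi(x) - \epsilon t^2 = s\}$, built from the family of strictly convex hypersurfaces foliating $\Omega$. The coefficients are $t$-independent, hence analytic in $t$, which is the partial analyticity Tataru requires.

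The key verification is principal normality, i.e.\ $\{p_3,\bar p_3\}$ vanishes to the right order on the characteristic set. On the cone component, this reduces to the classical fact that the real wave symbol is principally normal. On the component $\sigma=0$, it reduces to the observation that the transport factor is real principal type in the $t$-direction only, and the level sets are noncharacteristic for it provided $\epsilon$ is small. Pseudoconvexity of the level sets with respect to the cone follows from their strict $g$-convexity. The length condition $<2T$ ensures the continuation sweeps all of $\Omega$ starting from $\partial\Omega$, giving $u=0$ near $t=0$, and hence $f_3=u_{tt}(\cdot,0)=0$.

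\textbf{Stability.} Here I would follow Stefanov--Uhlmann. First I build a geometric optics parametrix for $f\mapsto u$. Decomposing the initial data microlocally, $u=\sum_{\pm}\int e^{i\phi_\pm(t,x,\xi)}a_\pm\,\hat f(\xi)\,d\xi$ plus a term supported near $\sigma=0$. The phases $\phi_\pm$ solve the eikonal equation $\partial_t\phi = \pm\sqrt{b/\tau}\,|\nabla_x\phi|$. The transport equations for the amplitudes pick up an exponential damping factor of the form $e^{-\int \gamma}$ along bicharacteristics, with $\gamma$ computed from $\alpha,\tau,b,c$; this factor is positive and bounded below for $t\le T$. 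The $\sigma=0$ component is non-propagating: it stays in $\mathcal K$, is smoothing in time, and contributes only a compact operator to the boundary data.

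Since $\partial\Omega$ is strictly convex, every singularity of $f\in L^2(\mathcal K)$ reaches $\partial\Omega$ transversally along a $g$-geodesic in both directions before time $T$. I then define the back-projection $A$ by solving the time-reversed problem with the boundary data $\Lambda f$ as Dirichlet data and restricting to $\mathcal K$ at $t=0$. A Fourier integral operator calculus shows $A\Lambda = I - K$ with $K$ of order $<0$ on $L^2(\mathcal K)$. The damping appears only as an elliptic multiplier, so one can alternatively show that $\Lambda^*\Lambda$ is an elliptic $\Psi$DO of order $-1$ measured against the $H^1$ norm on the data side.

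Ellipticity gives $\|f\|_{L^2(\mathcal K)}\le C\|\Lambda f\|_{H^1}+C\|f\|_{H^{-1}}$. Combining this with injectivity and compactness of $H^{-1}\hookrightarrow$, the standard Peetre / compactness-uniqueness argument removes the error term.

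\textbf{Main obstacle.} I expect the hardest step to be verifying the hypotheses of Tataru's theorem. The principal symbol has a double-type structure: the cone and the hyperplane intersect at $\sigma=0$, $\xi=0$ only at the zero section, but the third-order structure makes principal normality and the pseudoconvexity conditions nonstandard. I would first try to reduce to the factorized form and treat the transport factor as elliptic in the $x$-directions conormal to the foliation, or apply the continuation to $w=(\tau\partial_t+\alpha)u$, which satisfies a wave equation modulo lower-order coupling.
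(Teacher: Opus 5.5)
\textbf{The gap: your injectivity argument fails at $t=0$.}

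First, the even extension $u(x,|t|)$ does not solve an equation with $t$-analytic coefficients. (The odd extension is worse: it produces a source $2\tau f\,\delta(t)$.) Under $t\mapsto -t$ the terms $\tau\partial_t^3$ and $b\Delta\partial_t$ change sign while $\alpha\partial_t^2$ and $c^2\Delta$ do not. So on $(-T,T)$ you only get $\tau\partial_t^3u-b\Delta\partial_tu+\operatorname{sgn}(t)(\alpha\partial_t^2-c^2\Delta)u=0$, whose coefficients jump precisely where Tataru needs analyticity in $t$.

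Second, and more fundamentally, Tataru's hypotheses only concern covectors with $\sigma=0$. There $p_3(x,\xi,0)\equiv 0$, so all of $\{\sigma=0\}$ is characteristic, with vertical bicharacteristics $\{x=\mathrm{const}\}$. The cone component meets $\{\sigma=0\}$ only at the zero section and plays no role. Your level sets $\{\psi(x)-\epsilon t^2=s\}$ have normal with $t$-component $-2\epsilon t$. So at $t=0$ they are characteristic for the transport factor for every $\epsilon$, contrary to your claim; small $\epsilon$ only brings them closer to vertical cylinders. With $q=\sigma(\tau\sigma^2-b|\xi|^2)$ (so $p_3=-iq$) and $\varphi=\psi-\epsilon t^2$, at $t=\sigma=0$ one finds $\{q,\varphi\}=0$ and $\{q,\{q,\varphi\}\}=b^2|\xi|^4\varphi_{tt}=-2\epsilon b^2|\xi|^4<0$. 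The vertical rays touch the surface from the unknown side, and pseudoconvexity fails.

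This is not a technicality. Take constant coefficients with $\alpha b=\tau c^2$. Then $P=(\tau\partial_t+\alpha)(\partial_t^2-\frac{b}{\tau}\Delta)$, and $u=e^{-\alpha t/\tau}w(x)$ with $w\in C_c^\infty(\Omega)$ solves $Pu=0$ on $\R^n\times\R$, is analytic in $t$, and has zero Cauchy data on $\partial\Omega\times\R$. So no argument using only the equation and lateral Cauchy data can work. The conditions $u(0)=u_t(0)=0$ must enter quantitatively at $t=0$, not merely to glue an extension.

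\textbf{The missing ingredient and the paper's route.} This is exactly the paper's Step 1. Lemma~\ref{bound for both norms} uses $P=P_0\partial_t+c^2P_0+\gamma\partial_t^2$ and $P_0u(\cdot,0)=f$ to produce a $t=0$ boundary term. That term is absorbed by the $t=0$ terms of wave Carleman estimates for $u$ and $u_t$, with weight $(R-x^n)^2-at^2-r^2$ on $t\ge 0$; this is where strict convexity of $\{x^n=s\}$ is used. The result is Lemma~\ref{lemma local unique continuation}. Tataru's theorem is then applied only across the surfaces $\{(\epsilon-x^n)(T-t-x^n)=\gamma\}$, whose defining function has nonvanishing $t$-derivative, so the $\sigma=0$ conditions hold vacuously (Lemma~\ref{Lemma-local-continuation}). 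The two steps are iterated inward.

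Your fallback of working with $w=(\tau\partial_t+\alpha)u$ points in a workable direction, since recovering $u$ from $w$ is a Volterra integral that does use $u(0)=0$. But it leads to a wave equation with memory whose unique continuation you would still have to prove, and you do not carry it out.

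\textbf{Stability.} Your stability part follows essentially the paper's route: three-phase parametrix with a non-propagating $\sigma=0$ mode, back-propagation, ellipticity, then compactness--uniqueness. One correction: the length hypothesis only guarantees that \emph{one} of $\gamma_{x,\pm\xi}$ exits before time $T$, not both. So you get the elliptic symbol of Lemma~\ref{principal-symbol-composition} rather than $A\Lambda=I-K$, which is still enough. However, since you remove the error term using injectivity, the gap above carries over to the final estimate.
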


\begin{remark}
The previous theorem provides a unified statement of the main result. The detailed assumptions and formulations are given in Corollary \ref{cor:uniqueness} and Corollary \ref{cor:stability-thm}, which address separately the injectivity of the observation map and the corresponding stability estimate respectively.
\end{remark}
\medskip

We emphasize that the requirement for global boundary measurements is dictated by the precise foliation considered in the proof of injectivity (see Theorem \ref{Theorem-unique-continuation}). 
We note, however, that partial boundary measurements may also be considered under additional geometric and technical assumptions; see, for instance, \cite{acosta2025inverse} for a related result. On the other hand, the stability analysis of the inverse problem is carried out under partial boundary observations, showing that the stable reconstruction of the unknown initial datum $f_3$ remains possible even when the data are available only on a suitable subset of the boundary.\\

The rest of the paper is organized as follows. 
In section \ref{sec:UC} we establish the injectivity of the measurement operator by proving a unique continuation property for the Moore--Gibson--Thompson equation. 
The proof follows an argument introduced in \cite{UHLMANN} and later generalized in \cite{stefanov2013recovery,acosta2025inverse}. It consists of two main steps which are then iterated a finite number of times. In the first step, a Carleman estimates for small times allows to show unique continuation in a neighborhood of the (strictly convex) boundary. 
The iteration of this step requires the strict convexity assumption imposed on the foliation. In the second step, the uniqueness region is extended in time by means of non-characteristic foliation and Tataru's unique continuation techniques \cite{TATARU}. 

Section \ref{sec:stab} is devoted to the stability analysis of the inverse problem. There, we present a standard geometric optics construction of microlocal approximate solutions (parametrices). As a consequence of this analysis, we obtain continuity estimates for the observation map and the ellipticity of a normal pseudodifferential operator associated with the microlocal back-propagation of the boundary data. 

The paper concludes with an appendix, where we provide the detailed derivation of one the Carleman estimates employed in the proof of the unique continuation result.


\section{Unique Continuation and Injectivity of the Measurement Operator}\label{sec:UC}

In this section we prove the injectivity of the measurement operator $\Lambda$ by establishing a unique continuation property for solutions of the Moore--Gibson--Thompson equation. 
More precisely, we show that if the boundary measurement $\Lambda f_3$ vanishes, then the corresponding solution of the forward problem must vanish identically, which in turn implies that the unknown initial datum $f_3$ is zero. The proof of this unique continuation result constitutes the core analytical ingredient in the derivation of injectivity for the inverse problem.

In this section, we will work with the following generalization of the MGT operator defined in \eqref{eq:MGT-operator}:
\begin{equation}\label{general-mgt-operator}
P u := \, u_{ttt} + \alpha(x)\, u_{tt} - c^2(x)\, \partial_i\!\big(g^{ij}(x)\partial_j u\big) - \, \partial_i\!\big(g^{ij}(x)\partial_j u_t\big),
\end{equation}
with $g^{ij}(x)$ a smooth, symmetric, and positive definite matrix. Up to lower order terms, we obtain \eqref{eq:MGT-operator} from \eqref{general-mgt-operator} by choosing $g = \frac{b}{\tau}dx^2$ and relabeling coefficients.

Although the inverse problem is originally formulated for the special case corresponding to the Euclidean Laplacian, the unique continuation arguments are developed in this more general geometric setting. This choice allows us to exploit available Carleman estimates for wave operators with variable coefficients established in the literature \cite{BENJAMIN-ACOSTA-THERMO}, which we explicitly invoke in the proof, while at the same time producing a result that applies to a wider class of models. Moreover, we believe that presenting in detail the Carleman computations adapted to this operator is of independent interest and value for the general literature; for this reason, the full derivation is included in the appendix. This makes the analysis self-contained and provides a reusable framework for related problems.

Below, we use the notation
$$
\begin{gathered}
Q^{T}:=\{(x, t) \in Q: \operatorname{dist}(x, \partial \Omega)+t<T\} \quad \text { and } \quad \Omega^{T}:=\{x \in \Omega: \operatorname{dist}(x, \partial \Omega)<T\};
\end{gathered}
$$
and near $\partial\Omega$, we consider boundary normal coordinates 
\[
x^n=\operatorname{dist}_g(x,\partial\Omega),
\]
where
\[
\operatorname{dist}_g(x,\partial\Omega)
=
\inf_{y\in\partial\Omega} d_g(x,y),
\]
with $d_g$ denoting the Riemannian distance associated with the metric $g$. The mains results of this section are the following.
\begin{theorem}\label{Theorem-unique-continuation}[Boundary unique continuation for MGT]
Suppose that $\Omega$ is strictly convex domain for the metric $g=\left(g_{i j}(x)\right)_{i j}, \alpha $, $c$ are smooth functions depending only on the variable $x$ and $B_1,B_2$ smooth vector fields. In addition, assume that $T>0$ is such that $x^n=\operatorname{dist}_g(x, \partial \Omega)$ is a smooth function in $\Omega$ with non-zero differential for $0 \leq x^n \leq T$ and the level surfaces $\left\{x^n=s\right\}$, for $0 \leq s \leq T$, are strictly convex for the metric $g$. If
\begin{equation}\label{MGT-System}
\begin{gathered}
u_{t t t}+\alpha u_{t t}-c^2 \partial_{x^j}\left(g^{i j}(x) \partial_{x^i} u\right)-\partial_{x^j}\left(g^{i j}(x) \partial_{x^i} u_t\right)+ B_1 \nabla u_t + B_2 \nabla u  = 0\quad \text { in } Q \\
u(x, 0)=0, \quad u_t(x, 0)=0, \quad u_{tt}(x,0) = f\\
\left.u\right|_{\Sigma}=0,\left.\quad \dfrac{\partial u}{\partial \nu}\right|_{\Sigma}=0,
\end{gathered}
\end{equation}
with $f \in C^{\infty}(\bar{\Omega})$ and $\operatorname{supp} f \subset \Omega^T$, then $u=0$ in $Q^T$. In particular, $f=0$ in $\Omega^T$.
\end{theorem}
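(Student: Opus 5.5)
We follow the layer-stripping scheme of \cite{UHLMANN}, as later refined in \cite{stefanov2013recovery,acosta2025inverse}: we propagate the zero Cauchy data on $\Sigma$ inward along the foliation $\{x^n=s\}$, $0\le s\le T$. The basic structural observation is that the operator in \eqref{MGT-System} factors, modulo lower order terms, as
\[
P = (\partial_t + \alpha)\Big(\partial_t^2 - \Delta_g\Big) - (c^2-\alpha)\Delta_g + \text{l.o.t.},
\]
where $-\Delta_g u = -\partial_{x^j}(g^{ij}\partial_{x^i}u)$. Equivalently, setting $w := u_t + c^2 u$ (or $w:=u_t+\alpha u$, depending on which lower-order terms one wants to isolate) produces a coupled system in which $w$ satisfies a damped wave equation with principal part $\partial_t^2-\Delta_g$, sourced by $u$, while $u$ solves the first-order ODE $u_t + c^2u = w$ in time.

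\textbf{Step 1 (near the boundary, small times).} We use the convexity of $\partial\Omega$ and the Carleman estimate for variable-coefficient wave operators from \cite{BENJAMIN-ACOSTA-THERMO}, with weight $\phi = e^{\lambda\psi}$ and $\psi(x,t) = x^n - \epsilon^{-1}(t-t_0)^2 + \dots$ adapted to strict convexity, applied to $w$. The coupling term involving $u$ is absorbed by the ODE relation: $u(t)=\int_0^t e^{-c^2(t-s)}w(s)\,ds$ since $u(0)=0$, and a weighted Volterra estimate gives $\|e^{\sigma\phi}u\|\lesssim \sigma^{-1}\|e^{\sigma\phi}w\|$, which is swallowed for large $\sigma$. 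The appendix computation supplies the analogous estimate directly for $P$. This yields $u=0$ in a thin neighborhood $\{x^n<\delta,\ t<T-\delta'\}$.

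\textbf{Step 2 (extension in time).} The principal symbol of $P$ is $p = -i\xi_0^3 + \xi_0^2\cdot 0 +\dots$; more precisely, $p(x,\xi_0,\xi) = i\xi_0\,(|\xi|_g^2 - \xi_0^2)$ up to factors. Its characteristic set is $\{\xi_0=0\}\cup\{\xi_0^2=|\xi|_g^2\}$. We check that the surfaces $\{x^n = s\}\times(0,T)$, tilted as $\{x^n + \kappa(t)=s\}$, are non-characteristic, and that $P$ is analytically principally normal in $t$ (the coefficients are time-independent). Tataru's theorem \cite{TATARU} then propagates the vanishing of $u$ across these surfaces up to the domain of dependence, giving $u=0$ on $\{x^n<\delta,\ t+x^n<T\}$.

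\textbf{Step 3 (iteration and conclusion).} Strict convexity of each leaf lets us repeat Steps 1--2 with $\partial\Omega$ replaced by $\{x^n=s\}$. A compactness argument in $s\in[0,T]$ gives uniform step sizes, so finitely many iterations cover $Q^T$. We then read off $f=u_{tt}(\cdot,0)=0$ in $\Omega^T$.

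\textbf{Main obstacle.} We expect the main difficulty to lie in Step 2. The component $\{\xi_0=0\}$ of the characteristic set is characteristic for every time-like foliation, and the required tangential analyticity must be verified for a third-order, non-strictly-hyperbolic symbol. Our first attempt would be to apply Tataru's theorem to the factored wave part $w$, reconstructing $u$ from $w$ by the time ODE, rather than applying it to $P$ itself.
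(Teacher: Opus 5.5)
Your three-step architecture is the paper's: a Carleman estimate near $\partial\Omega\times\{t=0\}$, then Tataru's theorem to reach $t+x^n<T$, then iteration over the leaves $\{x^n=s\}$ with $T$ replaced by $T-s$. Step 3 matches the paper. Step 1 follows a different route that works once the details below are fixed. Step 2 is missing its decisive verification.

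\textbf{Step 1 (a different, valid route).} You and the paper rest on the same splitting \eqref{split-operator}, i.e.\ $P=P_0(\partial_t+c^2)+\gamma\,\partial_t^2$ up to first-order terms, with $\gamma=\alpha-c^2$.
\begin{itemize}
\item The paper applies the wave Carleman estimate separately to $u$ and to $u_t$, via Lemma~\ref{bound for both norms}. This creates the term $C\int_{\Omega_0}|f|^2e^{2\tau\varphi(\cdot,0)}dx$, which must then be absorbed by a trace term at $t=0$. That absorption relies on a summand $-2\tau\varphi|v_t|^2$ of $Z$. However, the appendix's own computation of $I_1$ gives $-2\tau\varphi_t|v_t|^2$, which vanishes at $t=0$.
\item You apply the estimate once, to $w=u_t+c^2u$. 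Since $w(\cdot,0)=0$ and $w=\partial_\nu w=0$ on $\Sigma$, Lemma~\ref{lemma-local-carleman-benjamin} gives no contribution from $t=0$: every summand of $Z$ contains $v$, $v_x$ or $\varphi_t$. So the $f$-term never arises, which is a genuine advantage.
\end{itemize}
What your route needs is the Volterra step, done correctly.
\begin{itemize}
\item Besides first-order terms in $w$, the source in the $w$-equation is $a_0u+a\cdot\nabla_xu$, coming from $[\Delta_g,c^2]$ and the $B_j$.
\item Through $u=\int_0^te^{-c^2(t-s)}w\,ds$ this source is bounded by $C(\|w\|_\tau+\|\nabla_xw\|_\tau)$, because $\varphi(x,\cdot)$ is non-increasing on $[0,t]$. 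It is then absorbed by $\tau\|w'\|_\tau^2+\tau^3\|w\|_\tau^2$.
\item No gain in $\tau$ is needed. Near $t=0$ the $\sigma^{-1}$ gain you claim fails anyway; only $\sigma^{-1/2}$ is available there.
\end{itemize}
This forces the paper's weight $(R-x^n)^2-at^2-r^2$.
\begin{itemize}
\item It must be non-increasing for $t\ge0$ with $\varphi_t(\cdot,0)=0$. Your $(t-t_0)^2$ with $t_0>0$ breaks the Volterra bound.
\item It must be decreasing in $x^n$, because the curvature term in the appendix is $-2\tau\varphi_{x^n}\partial_ng^{kl}v_kv_l$. Your $\psi=x^n-\dots$ has the wrong sign.
\end{itemize}
Two smaller points. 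The choice $w=u_t+\alpha u$ does not work: it leaves $(c^2-\alpha)\Delta_gu$, hence $\Delta_gw$ after the Volterra substitution, which the estimate cannot absorb. Also, the appendix estimate is for $P_0$, not for $P$.

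\textbf{Step 2 (the gap).} Your plan is the paper's: Tataru's theorem for $P$ itself, with $t$-independent coefficients, across tilted surfaces. But non-characteristicity plus analytic principal normality is not the hypothesis. What is needed is strong pseudoconvexity in $\Gamma=\{\theta=0\}$.
\begin{itemize}
\item For the wave operator, non-characteristicity suffices, essentially because $p|_\Gamma=|\xi|_g^2$ is elliptic.
\item Here $p=i\theta(|\xi|_g^2-\theta^2)$ vanishes identically on $\Gamma$. That makes analytic principal normality trivial, but it also means pseudoconvexity must be checked by hand.
\end{itemize}
You leave exactly this check as the ``main obstacle''. It is short. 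Assume $\psi_t\neq0$ and $|\nabla_x\psi|_g\neq|\psi_t|$, i.e.\ $p(\nabla\psi)\neq0$.
\begin{itemize}
\item On $\Gamma$, $\{p,\psi\}=i\psi_t|\xi|_g^2\neq0$ for $\xi\neq0$.
\item On $\Gamma$, $p_\psi=0$ forces $\langle\xi,\nabla_x\psi\rangle_g=0$ and $|\xi|_g^2=\tau^2(|\nabla_x\psi|_g^2-\psi_t^2)$. There
\[
\{p_\psi,\psi\}=-2i\tau^2\psi_t\bigl(|\nabla_x\psi|_g^2-\psi_t^2\bigr)\neq0 .
\]
\end{itemize}
So both conditions of Definition~\ref{definition-pseudoconvex-surface} hold vacuously, and Remark~\ref{remark-surface-implies-pseudoconvex-function} gives the weight $e^{\lambda\psi}$. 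As a check, on $\Gamma$ the first function condition reads $\{q,\{q,e^{\lambda\psi}\}\}=|\xi|_g^4\,\partial_t^2e^{\lambda\psi}>0$, where $p=iq$. With $\psi=(\epsilon-x^n)(T-t-x^n)$ this is Lemma~\ref{Lemma-local-continuation}.

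Your worry is correct only for untilted cylinders. Every covector in $\{\theta=0\}$ is characteristic, but a surface is characteristic only when $\psi_t=0$. In that case $\{p,\psi\}\equiv0$ and $\{q,\{q,\psi\}\}=|\xi|_g^4\psi_{tt}=0$ on $\Gamma$, so pseudoconvexity genuinely fails. The tilt is what makes the argument work.

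\textbf{Your fallback would fail.} You propose applying Tataru's theorem to $w$ and recovering $u$ from the ODE.
\begin{itemize}
\item $w$ satisfies no scalar differential equation by itself: its source $a_0u+a\cdot\nabla_xu$ is a memory term in $w$.
\item The monotone-weight Volterra argument of Step 1 does not carry over directly to Tataru's estimate, whose weight involves a Gaussian smoothing in $t$.
\item Turning the $(u,w)$ system back into a scalar equation, e.g.\ by applying $\partial_t+c^2$, reproduces the principal part $\partial_tP_0$, the symbol of $P$. You are then back at the computation above.
\end{itemize}
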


The proof of this theorem will be presented in subsections \ref{subsec:prelim_UC}-\ref{subsec:end_proof}. We first state and demonstrate the associated uniqueness result for the inverse problem.

\begin{corollary}\label{cor:uniqueness}[Uniqueness of the inverse problem]
Let $\Omega$ be a strictly convex domain and $T>0$ such that  $x^n=\operatorname{dist}_g(x,\partial\Omega)$ is smooth with strictly convex level sets $\{x^n=s\}$, $0\le s\le T$, for $g=\frac{\tau}{b}dx^2$. Then $\Lambda f=0$ implies $f=0$ in $\Omega_T$.
\end{corollary}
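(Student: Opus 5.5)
The plan is to reduce Corollary \ref{cor:uniqueness} to Theorem \ref{Theorem-unique-continuation}. Let $f=f_3$ with $\Lambda f=0$, and let $u$ solve \eqref{eq:forward} with data $(0,0,f)$. Divide \eqref{eq:MGT-operator} by $\tau$:
\[
u_{ttt}+\tfrac{\alpha}{\tau}u_{tt}-\tfrac{c^2}{\tau}\Delta u-\tfrac{b}{\tau}\Delta u_t=0 .
\]
Next I rewrite $\tfrac{b}{\tau}\Delta$ as $\partial_i(g^{ij}\partial_j\,\cdot)$ plus a first-order term, where $g^{ij}=\tfrac{b}{\tau}\delta^{ij}$, so that $g=\tfrac{\tau}{b}dx^2$. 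Explicitly,
\[
\tfrac{b}{\tau}\Delta v=\partial_i\!\big(\tfrac{b}{\tau}\partial_i v\big)-\nabla\!\big(\tfrac{b}{\tau}\big)\cdot\nabla v .
\]
Similarly,
\[
\tfrac{c^2}{\tau}\Delta u=\tfrac{c^2}{b}\,\partial_i\!\big(\tfrac{b}{\tau}\partial_i u\big)-\tfrac{c^2}{b}\nabla\!\big(\tfrac{b}{\tau}\big)\cdot\nabla u .
\]
This puts the equation in the form \eqref{MGT-System} with the relabelled coefficients $\tilde\alpha=\alpha/\tau$ and $\tilde c^2=c^2/b$, and with smooth vector fields $B_1=\nabla(b/\tau)$ and $B_2=(c^2/b)\nabla(b/\tau)$. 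Hypothesis \eqref{hyp:lower_bounds} guarantees that all of these are smooth.

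Next I verify the boundary data required in \eqref{MGT-System}. The initial data $u=u_t=0$ and $u_{tt}=f$ hold by construction, and $u|_{\partial\Omega\times(0,T)}=\Lambda f=0$. The Neumann trace is not measured directly, so I would recover it from the exterior. Since $\operatorname{supp} f\subset\Omega$, the function $u$ restricted to $(\R^n\setminus\bar\Omega)\times(0,T)$ solves the MGT equation with zero initial data and zero Dirichlet data on $\partial\Omega$. The exterior problem is well-posed, by \cite{nicaise2020wellposedness} applied in the bounded region $\Omega'\setminus\bar\Omega$ thanks to finite speed of propagation. Hence $u=0$ outside $\Omega$. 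The solution is smooth enough across $\partial\Omega$ (for smooth $f$ by regularity, or by a density and mollification-in-time argument for $L^2$ data), so both traces from the two sides agree, and in particular $\partial_\nu u|_{\Sigma}=0$.

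Then Theorem \ref{Theorem-unique-continuation} applies under the stated geometric hypotheses, provided $f\in C^\infty$ with support in $\Omega^T$. It gives $u=0$ in $Q^T$. Letting $t\to0^+$, we get $u_{tt}(\cdot,0)=f=0$ in $\Omega^T=\Omega_T$.

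The main obstacle is regularity: the theorem requires $f\in C^\infty(\bar\Omega)$, while the data is only $L^2$. The way around it uses that the coefficients do not depend on $t$. Mollifying in time, $u_\epsilon=\phi_\epsilon *_t u$ (for $t$ away from $0$), or using time derivatives $\partial_t^k u$, produces solutions with zero Cauchy data on $\Sigma$. To these I would apply the time-translated version of the unique continuation step, with the argument in Theorem \ref{Theorem-unique-continuation} localized in $Q^T$ and not relying on the initial conditions. A limit then gives $u=0$ in $Q^T$. A subtle point is the support assumption $\operatorname{supp} f\subset\Omega^T$. If it is not assumed, the conclusion is read as $f=0$ on $\Omega^T$, which follows from the same local-in-$Q^T$ unique continuation.
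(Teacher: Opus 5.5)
Your main argument is the paper's proof. You restrict $u$ to $(\R^n\setminus\overline{\Omega})\times(0,T)$, use uniqueness for the exterior problem with zero initial and Dirichlet data to get $u\equiv0$ there and hence $\partial_\nu u|_{\Sigma}=0$, and then apply Theorem \ref{Theorem-unique-continuation}. Your explicit rewriting of $\tau^{-1}P$ in the divergence form that theorem requires is correct: $g^{ij}=\frac{b}{\tau}\delta^{ij}$, coefficients $\alpha/\tau$ and $c^2/b$, $B_1=\nabla(b/\tau)$, $B_2=\frac{c^2}{b}\nabla(b/\tau)$. It makes precise a reduction the paper only mentions in passing.

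The step that would fail is your last paragraph, the reduction from $L^2$ data to smooth data.

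\begin{itemize}
\item Time derivatives and time mollifications of $u$ still solve the equation with zero lateral Cauchy data, but they leave the class of data the theorem covers. For example, $\partial_t u$ has initial data $(0,f,-\frac{\alpha}{\tau}f)$, whereas the remark after Lemma \ref{bound for both norms} says the Carleman argument needs $f_1=f_2=0$.
\item The replacement you propose is false for this operator: a time-translated unique continuation in $Q^T$ that does not use the initial conditions. The lateral boundary $\partial\Omega\times(0,T)$ is characteristic, because the principal symbol $i\theta(-\theta^2+|\xi|_g^2)$ vanishes at $\theta=0$. Explicit solutions exploit this. Take $c$ constant, $\alpha=c^2$ and $B_1=B_2=0$, so that $\gamma=0$ and $P=(\partial_t+c^2)P_0$. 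Then $u=e^{-c^2t}v(x)$ with any $v\in C_0^\infty(\Omega)$ satisfies $Pu=0$ and has zero Cauchy data on $\partial\Omega\times(0,T)$. Yet $u\neq0$ in $Q^T$ whenever $\operatorname{supp}v$ meets $\Omega^T$.
\item This is the non-propagating mode of Section \ref{sec:stab}, and only the conditions $u(\cdot,0)=u_t(\cdot,0)=0$ exclude it. The same example shows that smoothing in $t$ gains no spatial regularity, since $e^{-c^2t}v$ is smooth in $t$ for arbitrarily rough $v$. So your limiting argument has nothing to rest on.
\end{itemize}

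The paper does not attempt this reduction at all. It applies Theorem \ref{Theorem-unique-continuation} directly, so in effect it treats $f$ meeting that theorem's hypotheses ($f\in C^\infty(\overline{\Omega})$, $\operatorname{supp}f\subset\Omega^T$). You should either state the corollary in that setting, or justify the Step 1 estimates directly for finite-energy solutions while keeping the initial conditions in play rather than discarding them.
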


\begin{proof}[Proof of the Corollary]
Suppose that $\Lambda f=0$. In order to use Theorem \ref{Theorem-unique-continuation}, we consider the restriction of $u$ to the exterior domain
\[
Q_{\mathrm{ext}}
=
\bigl(\mathbb{R}^{n}\setminus\overline{\Omega}\bigr)\times(0,T) .
\]
Since $\operatorname{supp}(f)\subset \Omega$, the exterior function \[ u_{\mathrm{ext}} = u\big|_{Q_{\mathrm{ext}}} \] satisfies homogeneous initial conditions. Moreover, the assumption $\Lambda f=0$ yields homogeneous Dirichlet data on $\Sigma$. Therefore, $u_{\mathrm{ext}}$ solves
\[
\begin{cases}
Pu_{\mathrm{ext}}=0,
& \text{in } Q_{\mathrm{ext}},\\[1ex]
u_{\mathrm{ext}}|_{t=0}=0,
\qquad
\partial_tu_{\mathrm{ext}}|_{t=0}=0,
\qquad
\partial_t^2u_{\mathrm{ext}}|_{t=0}=0,
& \text{in } \mathbb{R}^{n}\setminus\overline{\Omega},\\[1ex]
u_{\mathrm{ext}}=0,
& \text{on } \Sigma .
\end{cases}
\]
By uniqueness for the exterior problem, $u_{\mathrm{ext}}\equiv 0$ in $Q_{\mathrm{ext}}$, and consequently $\partial_\nu u_{\mathrm{ext}}=0$ on $\Sigma$. Then, all the assumptions of Theorem \ref{Theorem-unique-continuation} are satisfied and applying the theorem yields the result.
\end{proof}

\subsection{Preliminaries for the proof of Theorem \ref{Theorem-unique-continuation}}\label{subsec:prelim_UC}

Before entering the proof, we briefly describe the strategy used to handle the third--order nature of the Moore--Gibson--Thompson equation. 
The key observation --- which is presented below as a lemma --- is that the operator can be factorized in a suitable way using wave--type operators, which allows us to reduce the unique continuation analysis to a sequence of second--order problems. 
This factorization plays a central role in the construction of the Carleman estimates employed below.

Let us first introduce the coefficient
$$
\gamma(x):=\alpha(x)-c^2(x),
$$
which allows us to write \eqref{general-mgt-operator} as 
\begin{equation}\label{split-operator}
 P u:=P_0 u_t+c^2 P_0 u+\gamma(x) u_{t t},  
\end{equation}
with $P_0$ a wave-type operator given by
$$
P_0:= \partial_t^2-\frac{\partial}{\partial x^j}\left(g^{i j}(x) \frac{\partial}{\partial x^i}\right) .
$$

Let's recall that for any $y \in \partial \Omega$, we can prescribe boundary normal coordinate $\left(x^{\prime}, x^n\right)$ near $y$, satisfying that $\Omega$ and $\partial \Omega$ are given by locally and respectively by $x^n>0$ and $x^n=0$. Moreover, in such coordinates, the metric $g$ takes the form 
\begin{equation}\label{definition-metric}
   g_{\alpha, \beta}\left(x^{\prime}, x^n\right) \mathrm{d} x^\alpha \mathrm{d} x^\beta+\left(\mathrm{d} x^n\right)^2 
\end{equation}
for $\alpha, \beta \leq n-1$. We denote $\tilde{g}=\left(g_{\alpha \beta}(x)\right)$. 

In this coordinates, the strict convexity of the level surfaces $\left\{x^n=s\right\}$ translates into
$$
\Pi(v, v)=\left(-\dfrac{1}{2} \dfrac{\partial g_{\alpha \beta}}{\partial x^n}\right) v^\alpha v^\beta \geq \kappa_s|v|_{\tilde{g}}^2, \quad \forall v \in T\left\{x^n=s\right\}
$$
with $\kappa_s=R_s^{-1}>0$ the smallest eigenvalue (principal curvature) of the second fundamental form $\Pi$ in $\left\{x^n=s\right\}$, and $R_s$ the largest curvature radius of $\left\{x^n=s\right\}$. The analogous condition for convectors follows from the natural isomorphism $\xi_i=g_{i j}(x) v^j$ and reads

\begin{equation}\label{positive-definiteness-inequality}
    \Pi(\xi, \xi)=\left(\dfrac{1}{2} \dfrac{\partial g^{\alpha \beta}}{\partial x^n}\right) \xi_\alpha \xi_\beta \geq \kappa_s|\xi|_{\tilde{g}}^2, \quad \forall \xi \in T^*\left\{x^n=s\right\}.
\end{equation}

We also define the following Carleman-weight function and some sets related to it: for $0<a<1$ and $R,r>0$, we introduce
$$
\varphi(x, t):=\left(R-x^n\right)^2-a t^2-r^2,
$$
$Q_0:=(\Omega \times[0, \infty)) \cap\left\{\varphi>0\right\}$, and $\Gamma_0=Q_0\cap\{\varphi=0\}$; in addition, given a point $y=\left(y^{\prime}, 0\right) \in \partial \Omega$ and local coordinates $\left(U,\left(x^1, \ldots, x^{n-1}\right)\right)$ in a boundary neighborhood of $y^{\prime}$, we define the sets

\begin{equation}\label{definition-local-spaces}
\begin{aligned}
& Q_0^y:=\left\{(t, x) \in Q: \varphi(x, t)>0, \;x^{\prime} \in U\right\}, \\
& \Gamma_0^y:=Q_0^y\cap\{\varphi=0\},\\
& \Omega_0^y:=\left\{x \in \Omega:\varphi(x, 0)>0, \;x^{\prime} \in U\right\}.
\end{aligned}
\end{equation}

\begin{lemma}\label{bound for both norms}
For initial conditions as in \eqref{MGT-System},
there is a constant $C>0$ such that
\begin{equation}\label{lemma-bound-on-both-norms}
\left\|P_0 u_t\right\|_\tau^2+\left\|P_0 u\right\|_\tau^2 \leq \|P u\|_\tau^2+\left\|\gamma u_{tt}\right\|_\tau^2 +C\int_{\Omega_0}|f|^2 e^{2 \tau \varphi(\cdot, 0)} d x .
\end{equation}
Here, $\|\cdot\|_{\tau}$ corresponds to the following weighted norm
$$
\|w\|_{\tau}^2:=\int_{Q_0} e^{2 \tau \varphi}|w|^2 d x d t.
$$
\end{lemma}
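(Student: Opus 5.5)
The plan is a direct energy-type computation built on the splitting \eqref{split-operator}. Since the coefficients of $P_0$ do not depend on $t$, we have $P_0u_t=\partial_t(P_0u)$. Then \eqref{split-operator} reads
\[
Pu-\gamma u_{tt}=\partial_t(P_0u)+c^2P_0u .
\]
I would take the weighted norm $\|\cdot\|_\tau^2$ of both sides and expand the square:
\[
\|Pu-\gamma u_{tt}\|_\tau^2=\|P_0u_t\|_\tau^2+\|c^2P_0u\|_\tau^2+\int_{Q_0}e^{2\tau\varphi}c^2\,\partial_t|P_0u|^2\,dx\,dt .
\]
The whole point is to show that the cross term is bounded below by $-C\int_{\Omega_0}|f|^2e^{2\tau\varphi(\cdot,0)}dx$.

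For the cross term I integrate by parts in $t$ over $Q_0$. Differentiating the weight produces
\[
-\int_{Q_0}2\tau\varphi_t\,e^{2\tau\varphi}c^2|P_0u|^2 .
\]
Since $\varphi_t=-2at\le 0$ on $Q_0\subset\{t\ge0\}$, this term equals $4a\tau\int_{Q_0} t\,e^{2\tau\varphi}c^2|P_0u|^2\ge0$ and can be dropped. The boundary contributions come from three pieces of $\partial Q_0$:
\begin{itemize}
\item On the lateral part $\partial\Omega\times(0,\infty)$ the $t$-component of the normal vanishes, so there is no contribution.
\item On the curved part $\Gamma_0=\{\varphi=0\}$ the outward normal is proportional to $-\nabla_{x,t}\varphi$, whose $t$-component $2at$ is nonnegative, so that term is $\ge0$.
\item On the bottom $\{t=0\}\cap\{\varphi>0\}=\Omega_0$ the outward normal has $t$-component $-1$. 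This gives $-\int_{\Omega_0}c^2|P_0u(\cdot,0)|^2e^{2\tau\varphi(\cdot,0)}dx$.
\end{itemize}
By the initial conditions in \eqref{MGT-System}, $u(\cdot,0)=0$ forces $\partial_i(g^{ij}\partial_ju)(\cdot,0)=0$, and $u_{tt}(\cdot,0)=f$. Hence $P_0u(\cdot,0)=f$, and the bottom term equals $-\int_{\Omega_0}c^2|f|^2e^{2\tau\varphi(\cdot,0)}$, which is bounded in absolute value by $\sup c^2$ times the $f$-integral.

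Combining these gives
\[
\|P_0u_t\|_\tau^2+\|c^2P_0u\|_\tau^2\le\|Pu-\gamma u_{tt}\|_\tau^2+C\int_{\Omega_0}|f|^2e^{2\tau\varphi(\cdot,0)}dx .
\]
Two routine adjustments then give \eqref{lemma-bound-on-both-norms}. First, $c\ge c_0$ gives $\|P_0u\|_\tau^2\le c_0^{-4}\|c^2P_0u\|_\tau^2$. Second, the triangle inequality gives $\|Pu-\gamma u_{tt}\|_\tau^2\le2\|Pu\|_\tau^2+2\|\gamma u_{tt}\|_\tau^2$. These produce harmless multiplicative constants, depending only on $c_0$ and $\sup c$, in front of the right-hand terms. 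Such constants are irrelevant for the later Carleman argument, and I would either state them explicitly or absorb them by rescaling, which is the form in which the lemma is used.

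The main obstacle is justifying the integration by parts and the boundary term on $\Gamma_0$. This needs enough regularity of $P_0u$ up to $t=0$; I would use $f\in C^\infty$ and approximate if necessary. It also needs a careful orientation of the normal on the level set $\{\varphi=0\}$, where the weight equals $1$ rather than vanishing. The sign computation above, driven by $\varphi_t\le0$, is what makes that term favorable. If instead one works with a cutoff near $\Gamma_0$, as is customary, that boundary term disappears altogether.
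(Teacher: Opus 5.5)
Your proposal is correct and is essentially the paper's proof: expand $\|Pu-\gamma u_{tt}\|_\tau^2$ using $P_0u_t=\partial_t(P_0u)$, integrate the cross term by parts in $t$ (the paper does this slice-wise on $[0,t(x))$ with a partition of unity rather than via the divergence theorem), drop the nonnegative contributions coming from $\varphi_t\le 0$ and from $\{\varphi=0\}$, and use $P_0u(\cdot,0)=f$. Your explicit tracking of the factor $2$ from $\|a+b\|_\tau^2\le 2\|a\|_\tau^2+2\|b\|_\tau^2$ and of $c_0^{-4}$ is more accurate than the paper's last step, which silently drops these constants, so the lemma is really proved (and only needed) up to such harmless multiplicative constants.
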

\begin{proof}
According to \eqref{split-operator}, we see that
\begin{equation}\label{equality-before-integration-by-parts}
\left\|P u-\gamma u_{t t}\right\|_{\tau}^2=\left\|P_0 u_t\right\|_{\tau}^2+\left\|c^2P_0 u\right\|_{\tau}^2+\int_{Q_0} c^2 e^{2\tau \varphi} \partial_t\left|P_0 u\right|^2 d x d t.
\end{equation}
We want to estimate from below the third term of the RHS of \eqref{equality-before-integration-by-parts}. Consider a smooth and finite partition of unity $\left\{\chi_i\right\}$ subordinate to a collection $\left\{U_i\right\}$, and consider points $y \in {U_i}$. Then, since $\varphi_t \leq 0$ on $Q_0$ and given the specific form of initial conditions, we obtain

\begin{align*}
\int_{Q_0} c^2 e^{2\tau \varphi} \partial_t\left|P_0 u\right|^2 d t d x &= \Sum_i\int_{Q_0^y} \chi_ic^2 e^{2\tau \varphi} \partial_t\left|P_0 u\right|^2 d t d x  \\
&=\Sum_i\left(\int_{\Omega_0^y} \chi_ic^2\left|P_0 u(\cdot, t(x))\right|^2 e^{2 \tau \varphi(\cdot, t(x))} d x-\int_{\Omega_0^y}\chi_ic^2\left|P_0 u(\cdot, 0)\right|^2 e^{2 \tau \varphi(\cdot, 0)} d x \right.\\
& \left.- \int_{\Omega_0^y}\int_0^{t(x)} \chi_i\tau c^2\left|P_0 u\right|^2\left(\partial_t \varphi\right) e^{\tau \varphi} d t d x .\right) \\
&\geq -\Sum_i\int_{\Omega_0^y} \chi_i c^2\left|P_0 u(\cdot, 0)\right|^2 e^{2 \tau \varphi(\cdot, 0)} d x\\
&= -\Sum_i\int_{\Omega_0^y} \chi_i c^2\left| f \right|^2 e^{2 \tau \varphi(\cdot, 0)} d x\\
&=-\int_{\Omega_0} c^2\left| f \right|^2 e^{2 \tau \varphi(\cdot, 0)} d x ,
\end{align*}
for all $t\leq t(x)$, with $t(x)$ being the positive times such that $\varphi(x,t(x)) = 0$. Therefore, from \eqref{equality-before-integration-by-parts} and since the coefficients are bounded, we conclude that there exists $C>0$ such that
\begin{equation*}
\left\|P_0 u_t\right\|_\tau^2+\left\|c^2P_0 u\right\|_\tau^2 \leq \|P u\|_\tau^2+\left\|\gamma u_{t t}\right\|_\tau^2 +  C\int_{\Omega_0}\left|f\right|^2 e^{2\tau \varphi(\cdot, 0)} d x. 
\end{equation*}
\end{proof}

\begin{remark}
The above estimate requires the assumption $f_1 = f_2 = 0$ on the initial conditions. The treatment of general initial data will certainly necessitate refined estimates adapted to the full MGT operator and is therefore left for future investigation.
\end{remark}


\subsection{Small time unique continuation (step 1)}\label{STEP-1}
In this first step, we establish a local unique continuation property for small times, associated with the Moore--Gibson--Thompson equation. The proof of this result is based on suitable Carleman estimates adapted to the Moore--Gibson--Thompson operator using factorization \eqref{split-operator}. 

\begin{lemma}\label{lemma local unique continuation}
Suppose that the conditions of Theorem \ref{Theorem-unique-continuation} are satisfied. There exists $\varepsilon_1, \varepsilon_2>0$ such that the solution $u$ to \eqref{MGT-System} vanishes in $\Omega^{\varepsilon_1} \times\left[0, \varepsilon_2\right]$.
\end{lemma}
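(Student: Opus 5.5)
We will prove the lemma by a Carleman argument in the boundary-normal region, built on the factorization \eqref{split-operator} and Lemma \ref{bound for both norms}. Fix $y\in\partial\Omega$ and work in boundary normal coordinates near $y$, with the weight $\varphi(x,t)=(R-x^n)^2-at^2-r^2$. We take $R$ slightly larger than the curvature radius of $\partial\Omega$, $r<R$ close to $R$, and $a\in(0,1)$. The level set $\{\varphi=0\}$ then cuts off a thin lens $Q_0^y$ adjacent to $\Sigma$. Its spatial width is of order $R-r$ and its time extent is of order $\sqrt{(R^2-r^2)/a}$, which gives the constants $\varepsilon_1,\varepsilon_2$.

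The first ingredient is the Carleman estimate for the wave-type operator $P_0$ with this weight, taken from \cite{BENJAMIN-ACOSTA-THERMO}. Strict convexity \eqref{positive-definiteness-inequality} makes $\varphi$ strongly pseudoconvex with respect to $P_0$ near the boundary, provided $a$ is small and $R-r$ is small. We apply the estimate to $w=\chi u$ and to $w=\chi u_t$, where $\chi$ is a cutoff equal to one on a slightly smaller lens. It reads
\[
\tau^3\|w\|_\tau^2+\tau\|\nabla_{x,t}w\|_\tau^2\le C\|P_0w\|_\tau^2+\text{(boundary terms)}.
\]
The boundary terms on $\Sigma$ vanish because $u$, $\partial_\nu u$ and hence $u_t$, $\partial_\nu u_t$ vanish there. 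The contribution from $t=0$ involves $f$, since $u_{tt}(\cdot,0)=f$ and $u(\cdot,0)=u_t(\cdot,0)=0$. Adding the estimates for $u$ and $u_t$ and using Lemma \ref{bound for both norms}, with $Pu$ replaced by the lower-order terms $-B_1\nabla u_t-B_2\nabla u$, we obtain
\[
\tau^3\big(\|u\|_\tau^2+\|u_t\|_\tau^2\big)+\tau\big(\|\nabla_{x,t}u\|_\tau^2+\|\nabla_{x,t}u_t\|_\tau^2\big)\le C\big(\|\nabla u_t\|_\tau^2+\|\nabla u\|_\tau^2+\|\gamma u_{tt}\|_\tau^2\big)+C\!\int_{\Omega_0}|f|^2e^{2\tau\varphi(\cdot,0)}dx+\text{commutator terms}.
\]
The term $\|u_{tt}\|_\tau\le\|\nabla_{x,t}u_t\|_\tau$ and the first-order terms are absorbed into the left side for $\tau$ large. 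The commutator terms coming from $[P_0,\chi]$ are supported where $\varphi<\delta$ for some small $\delta>0$, so they are bounded by $Ce^{2\tau\delta}$.

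The main obstacle is the initial-time term $\int|f|^2e^{2\tau\varphi(\cdot,0)}$. It carries the same weight as the interior terms and cannot be absorbed directly. We will control it using $u_{tt}(\cdot,0)=f$, writing $f(x)=u_{tt}(x,t)-\int_0^t u_{ttt}\,ds$. An alternative is to use the trace-type term at $t=0$ that the Carleman estimate itself provides. In \cite{BENJAMIN-ACOSTA-THERMO} that estimate gives a positive contribution $\tau\int|\partial_tw(\cdot,0)|^2e^{2\tau\varphi(\cdot,0)}$, because $\varphi_t\le0$ with $\partial_t\varphi=0$ at $t=0$. Applied to $w=u_t$, this controls $\tau\int|f|^2e^{2\tau\varphi(\cdot,0)}$ on the favorable side. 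For $\tau$ large it therefore dominates the term $C\int|f|^2e^{2\tau\varphi(\cdot,0)}$ produced by Lemma \ref{bound for both norms}, and we will check that the signs work out this way.

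Once all bad terms are absorbed, we restrict the left side to $\{\varphi>2\delta\}$. This gives $e^{4\tau\delta}\|u\|^2_{L^2(\varphi>2\delta)}\le Ce^{2\tau\delta}$, and letting $\tau\to\infty$ yields $u=0$ there. Covering $\partial\Omega$ by finitely many such patches, using compactness and the partition of unity already used in Lemma \ref{bound for both norms}, gives a uniform neighborhood $\Omega^{\varepsilon_1}\times[0,\varepsilon_2]$ on which $u$ vanishes.
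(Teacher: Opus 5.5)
The route you outline is the paper's: Carleman estimates for $P_0$ applied to $u$ and $u_t$, then Lemma \ref{bound for both norms}, absorption, and a partition of unity. Your cutoff in $\varphi$ handles $\{\varphi=0\}$ more cleanly than the paper's surface terms $G(u),G(u_t)$. However, the step you flag as the main obstacle is never resolved, and neither of your two mechanisms works. For (b), the face $\{t=0\}$ enters the Carleman identity only through $\int Z(x,0)\,dx$, where
\[
Z=4\tau\langle\varphi_x,v_x\rangle_g v_t-2\tau\varphi_t|v_t|^2+2\tau(\Psi+\tilde\gamma)vv_t-2\tau\varphi_t|v_x|_g^2-2\tau^3\Phi\varphi_t|v|^2 .
\]
Since $\varphi_t(\cdot,0)=0$, what survives at $t=0$ is $4\tau\langle\varphi_x,v_x\rangle_g v_t+2\tau(\Psi+\tilde\gamma)vv_t$. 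This vanishes whenever $w(\cdot,0)=0$, which holds for both $w=u$ and $w=u_t$. So the fact you cite, $\partial_t\varphi=0$ at $t=0$, is exactly why there is \emph{no} trace term, favourable or otherwise. The paper gets $\tau\int e^{2\tau\varphi(x,0)}|u_{tt}(x,0)|^2dx$ by integrating the term $-2\tau\varphi|v_t|^2$ of \eqref{equation for Z} in $t$. But the appendix's own computation of $I_1$ produces $\partial_t(-2\tau\varphi_t|v_t|^2)$, so that $\varphi$ is a misprint for $\varphi_t$, and the paper does not supply this step either. For (a), you would need weighted control of $u_{ttt}$, which $H^1$-level estimates do not give. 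Even the Bukhgeim--Klibanov form of the idea does not suffice. There you replace $u_{ttt}$ by $\Delta_g u_t+P_0u_t$ inside $-\int\partial_t(|u_{tt}|^2e^{2\tau\varphi})$ and integrate by parts, using $u_{tt}|_\Sigma=0$ and $\nabla u_t(\cdot,0)=0$. This only yields
\[
\int_{\Omega_0}|f|^2e^{2\tau\varphi(\cdot,0)}\,dx\le C\tau\|\nabla_{x,t}u_t\|_\tau^2+C\tau^{-1}\|P_0u_t\|_\tau^2+(\text{cutoff errors}),
\]
which has the same power of $\tau$ as the Carleman left-hand side. An $O(1)$ multiple of the trace therefore still cannot be absorbed.

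The missing idea is a gain in $\tau$ on the $f$-term of Lemma \ref{bound for both norms}. Because $P_0$ has time-independent coefficients, $w:=P_0u$ solves the ODE $w_t+c^2w=F:=Pu-\gamma u_{tt}$ with $w(\cdot,0)=f$, and $P_0u_t=w_t$. By Duhamel's formula, and since $\varphi$ is nonincreasing in $t$ on $Q_0$, the $F$-part of $w$ has $\|\cdot\|_\tau$-norm at most $C\|F\|_\tau$. Since $e^{2\tau\varphi(x,t)}=e^{2\tau\varphi(x,0)}e^{-2a\tau t^2}$, the free part satisfies $\|e^{-c^2t}f\|_\tau^2\le C\tau^{-1/2}\int_{\Omega_0}|f|^2e^{2\tau\varphi(\cdot,0)}dx$. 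Hence
\[
\|P_0u\|_\tau^2+\|P_0u_t\|_\tau^2\le C\|F\|_\tau^2+C\tau^{-1/2}\int_{\Omega_0}|f|^2e^{2\tau\varphi(\cdot,0)}\,dx .
\]
Inserting the trace bound above turns the $f$-contribution into $C\tau^{1/2}\|\nabla_{x,t}u_t\|_\tau^2+C\tau^{-3/2}\|P_0u_t\|_\tau^2$, which is absorbed for $\tau$ large. With this replacement, the rest of your argument goes through: absorbing $F$ into $\tau(\|\nabla_{x,t}u\|_\tau^2+\|\nabla_{x,t}u_t\|_\tau^2)$, comparing $e^{4\tau\delta}$ with $e^{2\tau\delta}$, and covering $\partial\Omega$.
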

\begin{proof}
The idea is to apply the Carleman estimates for the operator $P_0$. However, due to the estimate obtained in Lemma \ref{bound for both norms}, we encounter a term that depends on the initial condition $f$. This requires us to introduce a similar term in the Carleman estimate. To do this, we set $P_\tau=e^{\tau \varphi} P_0 e^{\tau \varphi}$ and use the localized Carleman estimate, derived in [Lemma 1, \cite{BENJAMIN-ACOSTA-THERMO}] (see Appendix \ref{APPENDIX} for further details).
\begin{equation}\label{localized-carleman}
C \tau \int_{Q_0^y}\left(\left|v_t\right|^2+\left|v_x\right|_g^2\right)+C \tau^3 \int_{Q_0^y}|v|^2 d x d t \leq \int_{Q_0^y}\left|P_{\tau} v\right|^2 d x d t-\int_{Q_0^y}\left(\operatorname{div}_x Y+\partial_t Z\right) d x d t,
\end{equation}
where $v(t,x)=\chi\left(x^{\prime}\right) u(t, x) e^{\tau \varphi(t,x)}$ with $\chi \in C_0^{\infty}(U)$ for $U$ a neighborhood of a given point $y \in \partial \Omega$ where boundary local coordinates are defined and
\begin{equation}\label{equation for Z}
 Z = 4 \tau\left\langle\varphi_x, v_x\right\rangle_g v_t-2 \tau \varphi\left|v_t\right|^2+2 \tau \Psi v v_t-2 \tau\left|v_x\right|_g^2 \varphi_t-2 \tau^3 \Phi \varphi_t|v|^2 + 2 \tau \tilde{\gamma} v v_t.   
\end{equation}
In order to get the extra term depending on the initial condition $f$ we manipulate the second term on \eqref{equation for Z} 
\begin{equation}\label{manipulated-term}
\partial_t(2\tau\varphi \abs{v_t}^2) = 2\tau\varphi_t \abs{v_t}^2 + 2\tau \varphi \partial_t(\abs{v_t})^2.
\end{equation}
Since $v$ is localized and integrating by parts, we can see that
\begin{equation}\label{integration by parts of manipulated term}
\begin{aligned}
 \int_{Q_0^y}\varphi\partial_t(\abs{v_t})^2 dxdt&= \int_{\Omega_0^y }\int_0^{t(x)} \varphi\partial_t(\abs{v_t})^2dxdt\\
 &= \int_{\Omega_0^y} \varphi(x,t(x))\abs{v_t(x,t(x))}^2 dx-\int_{\Omega_0^y} \varphi(x,0)\abs{v_t(x,0)}^2 dx - \int_0^{t(x)}\int_{\Omega_0^y} \varphi_t \abs{v_t}^2 dxdt 
\end{aligned}
\end{equation}
Since $\varphi(x,t(x))=0$, from \eqref{manipulated-term} and \eqref{integration by parts of manipulated term} we see
\begin{equation}\label{integrals of manipulated term}
\int_{Q_0^y} \partial_t(2\tau\varphi \abs{v_t}^2) d x d t = -2\tau \int_{\Omega_0^y} \varphi(x,0)\abs{v_t(x,0)}^2 dx
\end{equation}
So, using \eqref{integrals of manipulated term}, the local carleman estimate \eqref{localized-carleman} becomes
\begin{align*}
C \int_{\Omega_0^y}\tau|v_t(x,0)|^2 d x + &C \tau \int_{Q_0^y}\left(\left|v_t\right|^2+\left|v_x\right|_g^2\right)+C \tau^3 \int_{Q_0^y}|v|^2 d x d t\\
&\leq \int_{Q_0^y}\left|P_{\tau} v\right|^2 d x d t-\int_{Q_0^y}\left(\operatorname{div}_x Y+\partial_t \tilde{Z}\right) d x d t .
\end{align*}
Where $\tilde{Z}$ is $Z$ minus the second term and after we apply the Gauss-Ostrogradskii formula (divergence theorem) noticing that $\left.v\right|_{\Sigma}=\left.\frac{\partial v}{\partial \nu}\right|_{\Sigma}=0$, and $v(x, 0)=0$ in $\Omega$, there exists a matrix-function $A(x, t)$, a vector-function $B(x, t)$, and a scalar-function $d(x, t)$ which are continuous and depend on $\varphi$ and $Q_0$, such that the following estimate holds
\begin{align*}
C \int_{\Omega_0^y}\tau|v_t(x,0)|^2 d x + &C \tau \int_{Q_0^y}\left(\left|v_t\right|^2+\left|v_x\right|_g^2\right)+C \tau^3 \int_{Q_0^y}|v|^2 d x d t \\
&\leq \int_{Q_{0}^y} e^{2 \tau \varphi}\left|P_{\tau} v\right|^2 d x d t+\int_{\Gamma_{0}^y}\left(\left\langle A v^{\prime}, v^{\prime}\right\rangle+\left\langle B, v^{\prime}\right\rangle v+d\left|v\right|^2\right) d S
\end{align*}

To get the estimate in a neighborhood of the whole boundary, as done in the proof of Lemma \ref{bound for both norms}, we consider a smooth and finite partition of unity $\left\{\chi_i\right\}$ subordinate to a collection $\left\{U_i\right\}$, and consider points $y_i \in U_i$. Denoting $u_i=\chi_i^{1 / 2} u$, from the previous, there are continuous matrix, vector and scalar-valued functions $A_i(x, t)$, $B_i(x, t)$ and $d_i(x, t)$ such that
\begin{equation*}
\begin{aligned}
& \int_{\Omega_0}e^{2 \tau \varphi(x,0)} \tau|u_t(x,0)|^2 d x + \tau^3 \int_{Q_0} e^{2 \tau \varphi}|u|^2 d x d t+\tau \int_{Q_0} e^{2 \tau \varphi}\left(\left|u_t\right|^2+\left|u_x\right|_g^2\right) d x d t \\
& =\sum_i\left(\tau \int_{\Omega_0}\chi_i e^{2 \tau \varphi(x,0)} |u_t(x,0)|^2 d x + \tau^3 \int_{Q_0} \chi_i e^{2 \tau \varphi}|u|^2 d x d t+\tau \int_{Q_0} \chi_i e^{2 \tau \varphi}\left(\left|u_t\right|^2+\left|u_x\right|_g^2\right) d x d t\right) \\
& \leq \sum_i\left(\tau \int_{\Omega_0} e^{2 \tau \varphi(x,0)} |(u_i)_t(x,0)|^2 d x + \tau^3 \int_{Q^{y_i}_0} e^{2 \tau \varphi}\left|u_i\right|^2 d x d t+\tau \int_{Q^{y_i}_0} e^{2 \tau \varphi}\left(\left|\left(u_i\right)_t\right|^2+\left|\left(u_i\right)_x\right|_g^2\right) d x d t\right) \\
& +C \tau \int_{Q_0} e^{2 \tau \varphi}|u|^2 d x d t + C\tau \int_{\Omega_0} e^{2 \tau \varphi(x,0)} |u(x,0)|^2\\
& \leq C \sum_i\left(\int_{Q^{y_i}_0} e^{2 \tau \varphi}\left|P_0 u_i\right|^2 d x d t+\int_{\Gamma^{y_i}_0}\left(\left\langle A_i u_i^{\prime}, u_i^{\prime}\right\rangle+\left\langle B_i, u_i^{\prime}\right\rangle u_i+d_i\left|u_i\right|^2\right) d S\right) \\
& +C \tau \int_{Q_0} e^{2 \tau \varphi}|u|^2 d x d t  + C\tau \int_{\Omega_0} e^{2 \tau \varphi(x,0)} |u(x,0)|^2\\
& \leq C \sum_i \int_{Q^{y_i}_0} e^{2 \tau \varphi}\left(\chi_i|P_0 u|^2+\left|\left[P_0, \chi_i\right] u\right|^2\right) d x d t + C \int_{\Gamma_0}\left(\left\langle A u^{\prime}, u^{\prime}\right\rangle+\left\langle b, u^{\prime}\right\rangle u+d|u|^2\right) d S\\
& +C \tau \int_{Q_0} e^{2 \tau \varphi}|u|^2 d x d t + C\tau \int_{\Omega_0} e^{2 \tau \varphi(x,0)} |u(x,0)|^2,
\end{aligned}
\end{equation*}
with $A, B$ and $d$, piecewise continuous functions. We can absorb the interior integrals with lower order derivatives of $u$ with the left hand side and get
\begin{equation}\label{carleman-u}
\begin{aligned}
& \int_{\Omega_0}e^{2 \tau \varphi(x,0)} \tau|u_t(x,0)|^2 d x +\int_{Q_0}\left(\tau^3|u|^2+\tau\left|u^{\prime}\right|^2\right) e^{2 \tau \varphi} d x d t+\int_{\Gamma_0}\left(\left\langle A u^{\prime}, u^{\prime}\right\rangle+\right.\left.2\left\langle b, u^{\prime}\right\rangle u+d|u|^2\right) d S \\
& \leq C \int_{Q_0}|P_0 u|^2 e^{2 \tau \varphi} d x d t + C\tau \int_{\Omega_0} e^{2 \tau \varphi(x,0)} |u(x,0)|^2,
\end{aligned}
\end{equation}
similarly for $u_t$ we have
\begin{equation}\label{carlema-u_t}
\begin{aligned}
& \int_{\Omega_0}e^{2 \tau \varphi(x,0)} \tau|u_{tt}(x,0)|^2 d x +\int_{Q_0}\left(\tau^3|u_t|^2+\tau\left|u_t^{\prime}\right|^2\right) e^{2 \tau \varphi} d x d t+\int_{\Gamma_0}\left(\left\langle A u_t^{\prime}, u_t^{\prime}\right\rangle+\right.\left.2\left\langle b, u_t^{\prime}\right\rangle u_t+d|u_t|^2\right) d S \\
& \leq C \int_{Q_0}|P_0 u_t|^2 e^{2 \tau \varphi} d x d t+ C\tau \int_{\Omega_0} e^{2 \tau \varphi(x,0)} |u_t(x,0)|^2.
\end{aligned}
\end{equation}
Defining
$$
G(u) = \int_{\Gamma_0}\left(\left\langle A u^{\prime}, u^{\prime}\right\rangle+\right.\left.2\left\langle b, u^{\prime}\right\rangle u+d|u|^2\right) d S,
$$
and given the initial conditions $u(x,0) = 0$, $u_t(x,0) = 0$, $u_{tt}(x,0) = f$ and adding together \eqref{carleman-u} with \eqref{carlema-u_t} we get
$$
\begin{gathered}
\int_{\Omega_0}e^{2 \tau \varphi(x,0)} \tau|f|^2 d x + \int_{Q_0}\left(\tau^3|u|^2+\tau\left|u^{\prime}\right|^2\right) e^{2 \tau \varphi} d x d t + \int_{Q_0}\left(\tau^3|u_t|^2+\tau\left|u_t^{\prime}\right|^2\right) e^{2 \tau \varphi} d x d t + G(u) + G(u_t)\\
\leq C \int_{Q_0}|P_0 u|^2 e^{2 \tau \varphi} d x d t + C \int_{Q_0}|P_0 u_t|^2 e^{2 \tau \varphi} d x d t.
\end{gathered}
$$
Therefore, using Lemma \ref{bound for both norms} we obtain
\begin{equation}\label{semi-final-carleman-local}
\begin{gathered}
\int_{\Omega_0}e^{2 \tau \varphi(x,0)} \tau|f|^2 d x + \int_{Q_0}\left(\tau^3|u|^2+\tau\left|u^{\prime}\right|^2\right) e^{2 \tau \varphi} d x d t + \int_{Q_0}\left(\tau^3|u_t|^2+\tau\left|u_t^{\prime}\right|^2\right) e^{2 \tau \varphi} d x d t + G(u) + G(u_t)\\
\leq \|P u\|_\tau^2+C\left(\int_{\Omega_0}|f|^2 e^{2 \tau \varphi(\cdot, 0)} d x+\left\|\gamma u_{tt}\right\|_\tau^2\right)
\end{gathered}
\end{equation}
The last two terms on the RHS of \eqref{semi-final-carleman-local} can be absorbed by terms on LHS. 

Finally, applying triangle inequality and absorbing the corresponding terms on the RHS with those of the LHS, we introduce lower terms and obtain
\begin{equation*}\label{final-local-carleman}
\begin{gathered}
\int_{\Omega_0}e^{2 \tau \varphi(x,0)} \tau|f|^2 d x + \int_{Q_0}\left(\tau^3|u|^2+\tau\left|u^{\prime}\right|^2\right) e^{2 \tau \varphi} d x d t + \int_{Q_0}\left(\tau^3|u_t|^2+\tau\left|u_t^{\prime}\right|^2\right) e^{2 \tau \varphi} d x d t + G(u) + G(u_t)\\
\leq \|P u + B_1 \nabla u_t + B_2 \nabla u \|_\tau^2.
\end{gathered}
\end{equation*}
Assuming that $Pu+ B_1 \nabla u_t + B_2 \nabla u =0$, and seeing that the terms $G(u),G(u_t)$ are independent of $\tau$, we can divide by $\tau^3$ and let $\tau$ goes to infinity to finally deduce that $u=0$ in $Q_0$ and in particular, $f=0$ in $\Omega^T$. We can then find $\epsilon_1, \epsilon>0$ such that $\Omega^{\epsilon_1} \times\left[0, \epsilon\right] \subseteq Q_0$ and conclude the proof.
\end{proof}
\subsection{Unique continuation up to the characteristic set (step 2)}
Our goal now is to prove that $u$ vanishes in the same spatial neighborhood but for times up to the characteristic surface $T-t=\operatorname{dist}(x, \partial \Omega)$. This is achieved in Lemma \ref{lemma continuation of the zero}, whose proof is based on Lemma \ref{Lemma-local-continuation}.

\begin{lemma}\label{lemma continuation of the zero}
Under the condition of Theorem \ref{Theorem-unique-continuation}, $u=0$ in

\[
Q^{\epsilon_1}:=\left\{(x, t) \in Q: x \in \Omega^{\epsilon_1}, 0<t<T-\operatorname{dist}(x, \partial \Omega)\right\},
\]
where $\epsilon_1$ is as in Lemma \ref{lemma local unique continuation}.
\end{lemma}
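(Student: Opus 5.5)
The plan is to propagate the zero set from the thin slab $\Omega^{\epsilon_1}\times[0,\varepsilon_2]$ of Lemma \ref{lemma local unique continuation} forward in time, up to the characteristic surface $t+\operatorname{dist}(x,\partial\Omega)=T$. The engine is Tataru's unique continuation theorem \cite{TATARU} for operators whose coefficients are analytic (here: independent) in $t$, applied across a non-characteristic family of surfaces. This is the role of the auxiliary Lemma \ref{Lemma-local-continuation}. Its formulation will be: if $u$ solves \eqref{MGT-System}, vanishes on one side of a $C^2$ hypersurface $S\subset Q$ near a point $p\in S$, and $S$ is non-characteristic for $P$ at $p$, then $u$ vanishes in a neighborhood of $p$.

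To justify Lemma \ref{Lemma-local-continuation}, I would first compute the principal symbol of $P$, which is third order. It equals $p_3(x,\xi,\tau)=i\tau\,(|\xi|_g^2-\tau^2)$, up to a constant factor, since $u_{ttt}$ and $-\partial_i(g^{ij}\partial_j u_t)$ are the top-order terms. Because all coefficients are $t$-independent, Tataru's hypothesis of partial analyticity in $t$ holds trivially. It then suffices to check that $p_3$ is principally normal, or more precisely that $P$ satisfies Tataru's pseudoconvexity condition in the non-analytic directions $(x,\xi)$, relative to the surface $S$. The factorization \eqref{split-operator} helps here. The factor $\tau$ is characteristic only on $\{\tau=0\}$, and a surface $\{\psi=0\}$ with $\psi_t\neq0$ is non-characteristic for it. The factor $|\xi|_g^2-\tau^2$ is the wave symbol, for which the standard Tataru/Robbiano--Zuily result applies across non-characteristic surfaces. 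The real symbol $|\xi|_g^2-\tau^2$ multiplied by the imaginary unit is still principally normal, since $\{p,\bar p\}$ is controlled by $p$ itself. I therefore expect the check to reduce to the wave-operator case. Lower order terms, including $\alpha u_{tt}$, $B_1\nabla u_t$ and $B_2\nabla u$, do not affect the result.

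With Lemma \ref{Lemma-local-continuation} in hand, I would build a continuous foliation by surfaces $S_s=\{(x,t): t = h_s(x)\}$ adapted to the distance function. One choice is $h_s(x)=s-\theta(x)$, with $\theta$ a smooth function of $x^n$ that is increasing on $\Omega^{\epsilon_1}$. These surfaces must be non-characteristic for $P$, which means $|\nabla h_s|_g\neq1$ and $\partial_t$-component $\neq0$. The simplest way to achieve this is to take $|\nabla_x h_s|_g<1$, i.e.\ spacelike surfaces such as $t=s-\lambda(\epsilon_1-x^n)$ with $\lambda<1$. These surfaces are bounded by the lateral boundary $\Sigma$, where $u$ and $\partial_\nu u$ vanish (extend $u$ by zero outside $\Omega$ across $\Sigma$). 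They are also bounded by $\{x^n=\epsilon_1\}$, which requires care.

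The continuation argument runs in the parameter $s$. Let $s^*$ be the supremum of the $s$ for which $u=0$ below $S_s$ within the cone region $Q^{\epsilon_1}$. Lemma \ref{lemma local unique continuation} gives $s^*>0$. At $s^*$, the lemma is applied at each point of $S_{s^*}\cap Q^{\epsilon_1}$, and compactness then yields a contradiction unless the surface has reached $t=T-x^n$. A sharper approach replaces spacelike planes by surfaces approaching the null cone $t+x^n=T$ from below, letting the slope tend to $1$; this recovers the full region $Q^{\epsilon_1}$ as a union. The main obstacle is the region near the inner edge $x^n=\epsilon_1$, where no zero information is available from the side. There the slab of Lemma \ref{lemma local unique continuation} must be used, and the surfaces must be shaped so that their boundaries meet it transversally or stay inside the domain of dependence. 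Geometrically this is the classical domain-of-dependence argument, and I would handle it by letting the surfaces curve downward to time $\le\varepsilon_2$ near $x^n=\epsilon_1$. The step requiring the most care is verifying Tataru's condition for the third-order symbol.
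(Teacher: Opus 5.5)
\textbf{The gap is the choice of foliation.} Spacelike surfaces ($|\nabla_x h_s|_g<1$) only reproduce the domain-of-dependence argument for the Cauchy problem. The vanishing Cauchy data on $\Sigma$ is what has to carry the zero set up to $t=T-x^n$, and it can only be exploited by continuation across \emph{timelike} surfaces.

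Concretely, your compactness step at $s^*$ fails near the edge $\{x^n=\epsilon_1\}$. Your local lemma at a point of $S_{s^*}$ needs one-sided vanishing in a full neighborhood. So the neighborhoods it produces shrink to zero as $x^n\to\epsilon_1$, unless that part of $S_{s^*}$ lies in the slab $t\le\varepsilon_2$, the only place where $u=0$ is known across $\{x^n=\epsilon_1\}$. Since $|\nabla_x h_s|_g<1$ makes $h_s$ $1$-Lipschitz for $d_g$, this forces $h_s(x)\le\varepsilon_2+\epsilon_1-x^n$. Your sweep therefore stalls at $\{t<\varepsilon_2+\epsilon_1-x^n\}$, a proper subset of $Q^{\epsilon_1}$ as soon as $T>\varepsilon_2+\epsilon_1$. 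Letting the slope tend to $1$ does not change this bound.

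Your proposed repair bends $S_s$ down to $t\le\varepsilon_2$ near $x^n=\epsilon_1$ while keeping it near $t=T$ at $x^n=0$. That forces $|\nabla_x h_s|_g>1$ somewhere. If $S_s$ stays spacelike anywhere else, continuity gives a point with $|\nabla_x h_s|_g=1$. There $S_s$ is characteristic for $p=i\theta(|\xi|_g^2-\theta^2)$ (writing $\theta$ for the dual of $t$, as in the paper), and Tataru's theorem gives nothing.

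\textbf{What is missing} is a family of surfaces that are timelike everywhere and never reach the inner edge inside $Q$. This is exactly what the paper's weight $\psi(x,t)=(\epsilon_1-x^n)(T-t-x^n)$ provides.
\begin{itemize}
\item In $Q^{\epsilon_1}$ one has $\psi_t=-(\epsilon_1-x^n)\neq0$ and $|\nabla_x\psi|_g=(T-t-x^n)+(\epsilon_1-x^n)>|\psi_t|$, so every level set is non-characteristic.
\item The level set $\{\psi=\gamma\}$ is the graph $t=T-x^n-\gamma/(\epsilon_1-x^n)$. Its slope is less than $-1$, and $t\to-\infty$ as $x^n\to\epsilon_1$.
\item So inside $Q$ it runs from $\Sigma$ (handled by the zero extension) down into the zero slab. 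It never meets $\{x^n=\epsilon_1\}$ or the null surface $t+x^n=T$.
\item The superlevel sets $Q^{\epsilon_1}_\gamma$ form a small corner inside the slab for $\gamma$ close to $\epsilon_1T$, and they exhaust $Q^{\epsilon_1}$ as $\gamma\downarrow0$.
\item The infimum argument on $\gamma^*$ then closes by compactness: $\psi=0$ on both bad edges, so $\{\psi=\gamma^*\}$ stays away from them when $\gamma^*>0$.
\end{itemize}

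\textbf{A separate point on the local lemma.} Your reduction of the Tataru check to the wave factor in \eqref{split-operator} is not an argument. The conditions must be verified for the third-order symbol itself, which vanishes identically on $\{\theta=0\}$, unlike the wave symbol. A direct computation does work for any non-characteristic $\psi$:
\begin{itemize}
\item On $\{\theta=0\}$, $\{p,\psi\}=i\psi_t|\xi|_g^2\neq0$ for $\xi\neq0$.
\item Wherever $p_\psi=0$, one finds $\operatorname{Im}\{p_\psi,\psi\}=-2\tau^2\psi_t(|\nabla_x\psi|_g^2-\psi_t^2)\neq0$.
\end{itemize}
Hence both of Tataru's conditions are vacuous. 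For timelike surfaces this second fact is genuinely needed, because $p_\psi$ does vanish there. For your spacelike surfaces $p_\psi\neq0$, which is why the check looked easy — and also why those surfaces carry no lateral information.
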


As mentioned before, this lemma follows from a preliminary result, which established a local unique continuation across a non-characteristic hypersurface. When working locally, and similarly to the previous section, we denote by $x^n$ the signed distance to $\partial \Omega$, which, according to the hypothesis of Theorem \ref{Theorem-unique-continuation}, is well defined at least up to $x^n=T$. Close to $\partial \Omega$ and for a sufficiently small $\epsilon>0$, we define the weight function
\[
\psi(x, t):=\left(\epsilon-x^n\right)\left(T-t-x^n\right),
\]
and for $\gamma>0$, we consider the sets
\[
Q_\gamma^{\epsilon}:=\left\{(x, t) \mid \psi(x, t)>\gamma, x^n<\epsilon\right\} \cap Q.
\]
which are such that, $Q^\epsilon_\gamma\subset Q^\epsilon_{\gamma'}$ if $0\leq\gamma'<\gamma$, and they exhaust 
\[
Q^{\epsilon}=\left\{(x, t) \in Q \mid x^n<\epsilon, 0<t<T-x^n\right\},
\]
this is, $Q^{\epsilon}=$ $\bigcup_{\gamma>0} Q_\gamma^{\epsilon}$. Moreover, there exists $\gamma_0>0$ such that
\begin{equation}\label{gamma_0}
\emptyset \neq Q_{\gamma_0}^{\epsilon} \subset\left[0, \epsilon_1\right] \times\left\{x \in \Omega: \operatorname{dist}(x, \partial \Omega)<\epsilon\right\}
\end{equation}
We also denote by $B\left(x_0, t_0 ; r\right)$ the ball centered at $\left(x_0, t_0\right)$ and radius $r$ for the euclidean metric.
\begin{lemma}\label{Lemma-local-continuation}
Suppose that for $\left(x_0, t_0\right) \in Q^{\epsilon}$ and $u$ vanishes below the level surface $\left\{\psi(x, t)=\psi\left(x_0, t_0\right)\right\}$ near $\left(x_0, t_0\right)$, this is, in $Q_{\psi\left(x_0, t_0\right)}^{\epsilon} \cap B\left(x_0, t_0 ; r\right)$ for some $r>0$. Then, $u=0$ in a neighborhood of $\left(x_0, t_0\right)$. 
\end{lemma}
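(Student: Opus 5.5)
We will deduce Lemma \ref{Lemma-local-continuation} from Tataru's unique continuation theorem for operators with partially analytic coefficients \cite{TATARU}. This theorem applies here because the coefficients of $P$ in \eqref{general-mgt-operator} (and of the lower order perturbations $B_1\nabla u_t+B_2\nabla u$) are independent of $t$, so they are trivially analytic in $t$.

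Tataru's result states the following. Let $L$ be an operator of order $m$ whose coefficients are analytic in the variables $t$ and smooth in $x$. Let $S=\{\psi=\psi(x_0,t_0)\}$ be an oriented hypersurface through $(x_0,t_0)$ that is non-characteristic with respect to $L$. Assume the pseudoconvexity-type condition for the analytically principally normal case holds. Then any solution of $Lu=0$ vanishing on one side of $S$ near $(x_0,t_0)$ vanishes in a full neighbourhood of $(x_0,t_0)$. When the coefficients do not depend on $t$, the pseudoconvexity hypothesis reduces to requiring $S$ to be non-characteristic in the directions with $\tau=0$ (the dual variable of $t$). So the plan is to verify the hypotheses for the principal symbol
\[
p(x,t,\xi,\tau) = -i\,\tau\big(\tau^2 - |\xi|_g^2\big),
\]
since the principal part of $P$ is $\partial_t P_0$, up to the factor $i^3$. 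The terms $\alpha u_{tt}$ and $c^2\partial_i(g^{ij}\partial_j u)$ are of lower order. This is the factorization \eqref{split-operator} at the principal level.

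\textbf{Step 1: non-characteristic condition.} We need $p(x,t,d\psi)\neq0$ on $Q^\epsilon$. We compute
\[
d\psi=-(T-t-x^n+\epsilon-x^n)\,dx^n-(\epsilon-x^n)\,dt .
\]
Write $\psi_t=-(\epsilon-x^n)$ and $\psi_x=-(T-t+\epsilon-2x^n)\,dx^n$. In $Q^\epsilon$ we have $x^n<\epsilon$, so $\psi_t\neq0$; this makes the factor $\tau$ of $p$ nonzero. For the wave factor we need $\psi_t^2\neq|\psi_x|_g^2$. Using $|dx^n|_g=1$, this amounts to
\[
(\epsilon-x^n)^2 \neq (T-t+\epsilon-2x^n)^2 .
\]
Since $T-t-x^n>0$ in $Q^\epsilon$, we get $T-t+\epsilon-2x^n>\epsilon-x^n>0$. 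Hence $|\psi_x|_g>|\psi_t|$ and the surface is strictly space-like-transverse, i.e.\ non-characteristic, for $P_0$. Therefore $p(d\psi)\neq0$.

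\textbf{Step 2: Tataru's condition.} Next we check Tataru's condition for $t$-independent coefficients, where only the $x$-principal normality matters. On the set $\{\tau=0\}$ the symbol becomes $p=-i\tau|\xi|_g^2$ to leading order, and the relevant condition is that $\psi$ be non-characteristic for the restricted operator. I expect this is either automatic, since $p$ restricted to $\tau=0$ vanishes and Tataru's theorem then only requires non-characteristic $S$, or follows from Step 1 together with the sign of $\psi_x$.

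\textbf{Main obstacle.} The main difficulty is the precise matching to Tataru's hypotheses for a third-order operator that is not principally normal in the usual sense. The characteristic variety $\{\tau=0\}\cup\{\tau^2=|\xi|_g^2\}$ is a union of smooth pieces that meet on $\{\tau=0,\ \xi=0\}$, which is excluded. So, microlocally, $p$ is a product of a real elliptic-in-$\xi$ factor and simple real factors. Each factor is of real principal type and hence principally normal, and the partial analyticity in $t$ takes care of the remaining directions. I would first try to invoke Tataru's theorem in the form "$t$-analytic coefficients plus non-characteristic surface suffice", as used in \cite{UHLMANN,stefanov2013recovery}. Applied near $(x_0,t_0)$ with $u\equiv0$ on $Q^\epsilon_{\psi(x_0,t_0)}\cap B(x_0,t_0;r)$, this gives $u=0$ in a neighbourhood of $(x_0,t_0)$, which is the claim.
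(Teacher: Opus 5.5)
Your strategy is the paper's (Tataru's theorem plus the $t$-independence of the coefficients), but there is a genuine gap where the actual work lies. You never verify Tataru's strong pseudoconvexity hypothesis on $\Gamma=\{\theta=0\}$. (Here I write $\theta$ for the dual variable of $t$ and keep $\tau$ for the Carleman parameter, as in the paper.) Your Step 2 is only a guess.

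The shortcut you rely on, ``$t$-analytic coefficients plus a non-characteristic surface suffice'', is not a general consequence of Tataru's theorem. It holds for the wave operator because there the restricted symbol $p(x,\xi,0)=|\xi|_g^2$ is elliptic, which makes the conditions essentially automatic. For the MGT symbol $p=i\theta(|\xi|_g^2-\theta^2)$ the restriction to $\{\theta=0\}$ vanishes identically. This is the maximally degenerate case: every $(\xi,0)$ lies in the characteristic set, so the conditions are as demanding as possible, not automatic, and nothing transfers from the wave case. Read literally, your Step 2 criterion (non-characteristic for the restricted operator) even fails, since an operator with vanishing principal symbol has every covector characteristic.

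Likewise, the quantity you check in Step 1, $p(x_0,t_0,d\psi)\neq 0$, is not among Tataru's hypotheses. Those hypotheses are evaluated at covectors $(\xi,0)$ and involve $\{p,\psi\}$, $p_\psi$ and $\{p_\psi,\psi\}$.

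What is missing is the explicit check, which is what the paper's proof does, that both conditions of Definition \ref{definition-pseudoconvex-surface} are vacuous on $\{\theta=0\}$.

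For the first condition: on $\{\theta=0\}$ one has $p\equiv 0$, but $\{p,\psi\}=i\,\psi_t|\xi|_g^2$. This is nonzero for $\xi\neq 0$ because $\psi_t=-(\epsilon-x^n)\neq 0$, so the first condition is empty.

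For the second condition, at $\theta=0$ one computes
\[
p_\psi=-\tau\psi_t\big(|\xi|_g^2-\tau^2|\psi_x|_g^2+\tau^2\psi_t^2+2i\tau\langle\xi,\psi_x\rangle_g\big)
\]
and
\[
\{p_\psi,\psi\}=\psi_t\big(i(3\tau^2\psi_t^2+|\xi|_g^2-3\tau^2|\psi_x|_g^2)-4\tau\langle\xi,\psi_x\rangle_g\big).
\]
So $p_\psi=0$ forces $\langle\xi,\psi_x\rangle_g=0$ and $|\xi|_g^2=\tau^2(|\psi_x|_g^2-\psi_t^2)$. On that set $\{p_\psi,\psi\}=2i\tau^2\psi_t(\psi_t^2-|\psi_x|_g^2)$, which is nonzero precisely because of the inequality $|\psi_x|_g>|\psi_t|>0$ that you proved in Step 1.

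With both conditions vacuous, the rest is routine. $P$ is trivially analytically principally normal in $\{\theta=0\}$, since its coefficients are independent of $t$ and $p(x,\xi,0)\equiv 0$. You then pass to $\varphi=e^{\lambda\psi}$ via Remark \ref{remark-surface-implies-pseudoconvex-function} and apply \cite[Theorem 1]{TATARU}.

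So your Step 1 already contains the key inequality. It has to be fed into these bracket computations, not into a non-characteristic condition for $P$.
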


The main technical ingredient behind the proof of this lemma is a pseudodifferential-weighted Carleman estimate for analytically principally normal operators, developed by Tataru \cite{TATARU}. Before demonstrating the lemma, we recall some definitions. 

Let $A \subset \mathbb{R}^d=\mathbb{R}^{d_a} \times \mathbb{R}^{d_b}$, where $d_a+d_b=d$. We denote by $x=\left(x_a, x_b\right)$ the global variables and $\xi=\left(\xi_a, \xi_b\right)$ the associated dual variables. The differential operators involved are assumed to be analytic with respect to the variables $x_a$. We start with the definition of an analytically principally normal operator, followed by the definition of a strongly pseudoconvex function.

\begin{definition}\textbf{(Analytically Principally Normal Operator).} 
Let $P$ be a partial differential operator on an open set $A \subset \mathbb{R}^{d_a} \times \mathbb{R}^{d_b}$ of order $m \in \mathbb{N}^*$, with smooth coefficients and principal symbol $p\left(x_a, x_b, \xi_a, \xi_b\right)$. We say that $P$ is an analytically principally normal operator in $\left\{\xi_a=0\right\}$ inside $A$ if the coefficients of $P$ are real-analytic in the variable $x_a$ and for any $x^0 \in A$ there exist $\Omega_a \subset \mathbb{R}^{d_a}, \Omega_b \subset \mathbb{R}^{d_b}$ such that $x^0 \in \Omega_a \times \Omega_b\subset A$, and there exists a complex neighborhood $\Omega_a^{\mathbb{C}}$ of $\Omega_a$ in $\mathbb{C}^{d_a}$ and a constant $C>0$ such that for all $z_a, \tilde{z}_a \in \Omega_a^{\mathbb{C}}$ and all $\left(x_b, \xi_b\right) \in \Omega_b \times \mathbb{R}^{n_b}, \xi_b \neq 0$, we have
\[ 
\begin{gathered}
\left|\left\{p\left(z_a, \cdot, 0, \cdot\right), p\left(\tilde{z}_a, \cdot, 0, \cdot\right)\right\}\left(x_b, \xi_b\right)\right|+\left|\left\{\overline{p\left(z_a, \cdot, 0, \cdot\right)}, p\left(\tilde{z}_a, \cdot, 0, \cdot\right)\right\}\left(x_b, \xi_b\right)\right| \leq C\left|p\left(z_a, x_b, 0, \xi_b\right)\right|\left|\xi_b\right|^{m-1} \\
\left|\partial_{z_a} p\left(z_a, x_b, 0, \xi_b\right)\right| \leq C\left|p\left(z_a, x_b, 0, \xi_b\right)\right|
\end{gathered}
\] 
\end{definition}
\begin{remark}\label{remark-for-analytic-principally-normal-operator}
Special cases of operators being analytically principally normal in $\left\{\xi_a=0\right\}$ (as considered, for instance, in \cite{HORMANDER-PRINCIPALLY-NORMAL}) are those with analytic coefficients in $x_a$, principally normal and invariant with respect to the null bicharacteristic flow in $\left\{\xi_a=0\right\}$, i.e. satisfying
\[
\left|\{\bar{p}, p\}\left(x_a, x_b, 0, \xi_b\right)\right| \leq C\left|p\left(x_a, x_b, 0, \xi_b\right)\right|\left|\xi_b\right|^{m-1} \quad \text { and } \quad \partial_{x_a} p\left(x_a, x_b, 0, \xi_b\right)=0.
\]
\end{remark}

In the following definitions, we need the conjugated symbol $p_\psi(x, \xi):=p(x, \xi+i \tau \nabla \psi)$ and the Poisson Bracket
\[
\{f, h\}:=\sum_{j=1}^n \frac{\partial f}{\partial \xi_j} \frac{\partial h}{\partial x_j}-\frac{\partial f}{\partial x_j} \frac{\partial h}{\partial \xi_j}.
\]

\begin{definition}\label{definition-pseudoconvex-surface}\textbf{(Strongly pseudoconvex surface)}
Let $A \subset \mathbb{R}^n, \Gamma$ be a closed conic subset of $T^* A$, and let $P$ be principally normal in $\Gamma$ inside A$\Omega$ with principal symbol $p$. Let $S$ be a $C^2$ oriented hypersurface of $A$ and $x^0 \in S \cap A$. We say that $S$ is strongly pseudoconvex in $\Gamma$ at $x^0$ for $P$ if there exists $\psi \in C^2(\Omega ; \mathbb{R})$ such that $S=\{\psi=0\}, \nabla \psi\left(x^0\right) \neq 0$, satisfying:

\[
\begin{aligned}
\operatorname{Re}\{\bar{p},\{p, \psi\}\}\left(x^0, \xi\right)>0, \quad \text { if } p\left(x^0, \xi\right) & =\{p, \psi\}\left(x^0, \xi\right)=0 \text { and } \xi \in \Gamma_{x^0}, \xi \neq 0; \\
\frac{1}{i \tau}\left\{\bar{p}_\psi, p_\psi\right\}\left(x^0, \xi\right)>0, \quad \text { if } p_\psi\left(x^0, \xi\right) & =\left\{p_\psi, \psi\right\}\left(x^0, \xi\right)=0 \text { and } \xi \in \Gamma_{x^0}, \tau>0.
\end{aligned}
\]

\end{definition}

\begin{definition}\label{definition-pseudoconvex-function}\textbf{(Strongly pseudoconvex function)}
Let $P$ be a principally normal operator in $A \subset \mathbb{R}^n$, with principal symbol $p$, let $\varphi \in C^2(\Omega ; \mathbb{R})$ and $\Gamma$ be a closed conic subset of $T^* A$. Let $x^0 \in A$. We say that $\varphi$ is strongly pseudoconvex in $\Gamma$ at $x^0$ for $P$ if:

\[
\begin{aligned}
\operatorname{Re}\{\bar{p},\{p, \varphi\}\}\left(x^0, \xi\right)>0, & \text { if } p\left(x^0, \xi\right)=0 \text { and } \xi \in \Gamma_{x^0}, \xi \neq 0; \\
\frac{1}{i \tau}\left\{\bar{p}_\varphi, p_\varphi\right\}\left(x^0, \xi\right)>0, & \text { if } p_\varphi\left(x^0, \xi\right)=0 \text { and } \xi \in \Gamma_{x^0}, \tau>0.
\end{aligned}
\]
\end{definition}
\begin{remark}\label{remark-surface-implies-pseudoconvex-function}
Given a function $\psi$ satisfying the geometric conditions in Definition \ref{definition-pseudoconvex-surface}, there exists a positive constant $\lambda_0$ large enough such that, for all $\lambda\geq \lambda_0$, $\varphi=e^{\lambda \psi}$ satisfies the stronger pseudoconvexity conditions of Definition \ref{definition-pseudoconvex-function} (see \cite[Proposition 28.3.3]{hormander1994}). 
\end{remark}

\begin{proof}[Proof of Lemma \ref{Lemma-local-continuation}]
We start the proof by showing that the surface $S =\{ \psi = 0\}$ is strongly pseudoconvex for

\[
\mathcal{P}=\partial_t^3 + \alpha \partial_t^2 - \partial_t \sum_{i j} \partial_{x^i}\left(g^{i j} \partial_{x^j}\right)- c^2 \sum_{i j} \partial_{x^i}\left(g^{i j} \partial_{x^j}\right)
\]
on the subset of the cotangent bundle given by $\Gamma$. Here, $g$ is the metric defined in \eqref{definition-metric} and the principal symbol of $\mathcal{P}$  corresponds to $p(x, t ; \xi, \theta)= i\theta \left(-\theta^2+|\xi|_g^2\right)$, where $|\xi|_g^2=g^{i j}(x) \xi_i \xi_j$ is the covector-norm induced by $g$. We point out that the symbol $p$ is analytically principally normal in $\Gamma$ since it trivially satisfies the special case given in Remark \ref{remark-for-analytic-principally-normal-operator}. Now, we need to check both of the conditions on Definition \ref{definition-pseudoconvex-surface}. 

Let $\rho>\beta>0$ be the constants given by the evaluations:
\[
\psi_{x^{\prime}}\left(x_0,t_0\right)=0, \quad\psi_t\left(x_0,t_0\right)=-\beta \quad \text{and} \quad \psi_{x^n}\left(x_0,t_0\right)=-\rho
\]
Also, denoting by $\delta_{i j}$ the Kronecker delta,
\[
\psi_{t t}=0, \quad \psi_{t x^i}=\delta_{i n}, \quad \psi_{x^i x^j}=2 \delta_{i n} \delta_{j n} .
\]
For the first condition, we see that
\[
\{p,\psi \}  = i \theta \sum_k\sum_j \left( g^{jk}+g^{kj}\right) \psi_{x^k} - 3i\theta^2 \psi_t + i \psi_t\abs{\xi}_g^2,
\]
so $p(x_0,t_0,\xi,0) = 0$ and $\{p,\psi \}(x_0,t_0,\xi,0) = 0$ if and only if $\xi = 0$. This implies the set where the inequality should hold is empty, and consequently, the first condition is satisfied by default.

For the second condition, we compute
\[
\begin{aligned}
\left\{p_\psi, \psi\right\}&= \sum_k\left(-2\theta\tau \sum_j g^{kj}\psi_{x^j} - \tau\psi_t\sum_j\left(g^{jk}+g^{kj}\right)\xi_j \right. \\
&\left. +i\theta \sum_j\left(g^{jk}+g^{kj}\right)\xi_j - i2\tau^2\psi_t\sum_jg^{kj}\psi_{x^j}\right)\psi_x \\
&+ \left(6\tau\theta \psi_t -2\tau \sum_{ij}g^{ij}\xi_i\psi_{x^j} - i3\theta^2 + 3i\tau^2\psi_t^2 + i \abs{\xi}^2_g - i\tau^2\abs{\psi_x}^2_g \right)\psi_t.
\end{aligned}
\]
Plugging in $(x^0,\theta,\xi)$, we get
\[
\begin{aligned}
\left\{p_\psi, \psi\right\}(x_0,t_0,\xi,0)&= -\tau \beta\rho \sum_k\sum_j\left(g^{j k}+g^{k j}\right)\xi_j  -2\tau \rho\beta\xi_n \\
&+ i\left( 3 \tau^2 \beta \rho^2-3 \tau^2 \beta^3-\beta|\xi|_g^2 \right).
\end{aligned}
\]
In order for $\mathfrak{Im}\left\{p_\psi, \psi\right\}(x_0,t_0,\xi,0)$ to vanish for all large $\tau$, then necessarily we must have $\rho^2-\beta^2=0$ and $\xi=0$. We conclude that the second condition is satisfied by default as well.

Finally, as mentioned in Remark \ref{remark-surface-implies-pseudoconvex-function}, for a large enough $\lambda>0$, the function $\varphi = e^{\lambda\psi}$ is a strongly pseudoconvex weight function with respect to $\mathcal{P}$ in the set $\Gamma =\left\{(\xi, 0) \in T_{\left(x_0, t_0\right)}^* \Omega\times [0,T]\right\}$. The proof is then complete after applying \cite[Theorem 1]{TATARU}.
\end{proof}

We are now in position to prove Lemma \ref{lemma continuation of the zero}.
\begin{proof}[Proof Lemma \ref{lemma continuation of the zero}] Let $\gamma^*$ be the infimum of the $\gamma>0$, such that
\[
\operatorname{supp} u \cap Q_\gamma^{\epsilon_1}=\emptyset, \quad\forall \gamma>\gamma^*.
\]
Notice that due to \eqref{gamma_0}, $\gamma<\gamma_0$. On the other hand,
if $\gamma^*=0$, we are done.  However, if $\gamma^*>0$, we must have that $\operatorname{supp} u \cap \overline{Q^{\epsilon_1}_{\gamma^*}} \neq \emptyset$ and we
might applying Lemma \ref{Lemma-local-continuation} on every contact point $\left(t^*, x^*\right) \in \operatorname{supp} u \cap Q^{\epsilon_1}_{\gamma^*}$. Then, by compactness, we can find $\gamma_1\in [0,\gamma^*)$ such that $\operatorname{supp} u \cap Q_\gamma^{\epsilon_1}=\emptyset$ for all $\gamma>\gamma_1$, which contradicts the definition of $\gamma^*$. In consequence, we deduce that $u=0$ on every $Q_\gamma^{\epsilon_1}$ with $\gamma>0$, and therefore $u=0$ in $Q^{\epsilon_1}$.
\end{proof}
\subsection{Conclusion of the proof of Theorem \ref{Theorem-unique-continuation}}\label{subsec:end_proof}

By Steps 1 and 2, we have shown that if $u$ has null Cauchy data on $\partial \Omega \times (0,T)$, then $u=0$ in $Q^\varepsilon$ for some  $\varepsilon>0$. The same argument holds if one starts by assuming $u=0$ in $Q^\sigma$ with $\sigma\in [0,T]$, that is, $\exists \varepsilon(\sigma)>0$ such that $u=0$ in $Q^{\sigma+\varepsilon}.$


Now consider the interval $[0,T]$. Since it is compact, there is $N\in\mathbb{N}$ and $ \{\sigma_j\}_{j=1}^{N}\subset [0,T]$ with $\sigma_1=0$, such that $ [0,T]\subset [0,\epsilon_1)\cup\bigcup_{j=2}^{N}\left(\sigma_j,\,\sigma_j+\varepsilon(\sigma_j)\right)$.

Starting from null Cauchy data on $\partial \Omega \times (0,T)$, the local propagation result from Steps 1 and 2 extends the vanishing across each one of these finitely many intervals, that is, 
$u=0$ on every $Q^{\sigma_j+\epsilon(\sigma_j)}$,
and in particular,
\[
u=0 \qquad \text{in } Q^T =\{(x,t)\in Q:\text{dist}_g(x,\partial\Omega)+t<T\}.
\]
This completes the proof.

\section{Stability of the Inverse Problem}\label{sec:stab}

In this section, we establish quantitative stability estimates for the inverse problem of recovering the unknown initial datum $f_3$ from the boundary measurement $\Lambda f_3$. 
While the injectivity of the measurement operator is obtained through unique continuation arguments, the present analysis provides explicit continuity bounds that measure the sensitivity of the reconstruction with respect to perturbations in the data.

The proof is based on a microlocal analysis of the forward operator and the construction of geometric optics approximate solutions (parametrices). 
This approach allows us to derive precise mapping properties for the observation operator and to identify a pseudodifferential normal operator associated with the microlocal back-propagation of the boundary data, whose ellipticity, in conjunction with uniqueness, yields the desired stability estimate.

\subsection{Forward parametrix}\label{subsec:fwd_para}

For this section, we go back to our original formulation of the MGT equation with variable coefficients:
\[
\tau(x)\, u_{ttt} + \alpha(x)\, u_{tt} - c^2(x)\, \Delta u - b(x)\, \Delta u_t = 0.
\]
The associated differential operator is
\[
P := \tau\, \partial_t^3 + \alpha\, \partial_t^2 - c^2\, \Delta - b\, \Delta \partial_t,
\]
with full symbol 
\[
p(x,\xi,\sigma) = -i \tau\, \sigma^3 - \alpha\, \sigma^2 + i b\, \sigma\, |\xi|^2 + c^2\, |\xi|^2,
\]
which is independent of time; while its principal symbol is
\[
p_3(x,\xi,\sigma) = -i \tau\, \sigma^3  + i b\, \sigma\, |\xi|^2.
\]

We want to construct an operator $E$ acting on $\mathbf{f}=\left[f_1, f_2,f_3\right]$ such that $P(E \mathbf{f})=0$ microlocally, i.e. $P(E \mathbf{f}) \in$ $C^{\infty}\left(\mathbb{R}^n\right)$, and satisfying also the initial conditions (microlocally):

\begin{equation}\label{iniitial_conditions_parametrix}
\left.E f\right|_{t=0}=f_1,\left.\quad \partial_t E f\right|_{t=0}=f_2 \left.\quad \partial_{t}^2 E f\right|_{t=0}=f_3 , \quad f_1, f_2,f_3 \in \mathcal{D}^{\prime}
\end{equation}
We perform this construction for $f_i \in C^{\infty}\left(\mathbb{R}^n\right)$ with $\operatorname{supp} f_i \subset \bar{\Omega}$, and we later extend the domain of $E$ to compactly supported distributions.\\

Let's start by considering $E$ of the form

\[
E\left(f_1, f_2,f_3\right)(t)=\frac{1}{(2 \pi)^n} \sum_{k= 1}^3 \int e^{i \phi_k(x,t ; \xi)}\left(\sum_{j=1}^3|\xi|^{1-j}a_{j,k}(x,t;\xi)\hat{f}_{j}
\right) d \xi,
\]
where $\hat{f}_j$ is the Fourier transform of $f_i$, for $i=1,2,3$.

We look for phase functions $\phi_{ k} \in C^{\infty}\left(\mathbb{R}_t \times \mathbb{R}_x^n \times\left(\mathbb{R}^n \backslash 0\right)_{\xi}\right)$, positive homogeneous of degree 1 in $\xi$, and such that $\nabla_{x, t, \xi} \phi_{ k} \neq 0$. The amplitude functions $a_{i, k}$ are classical Kohn–Nirenberg symbols of order zero, admitting asymptotic expansions

\[
a_{i, k} \sim \sum_{j=0}^{\infty} a_{i, k}^{(j)}(x,t; \xi), \quad i=1,2,3,
\]
with $a_{i, k}^{(j)}$ smooth functions, positive homogeneous of degree $-j$ in the variable $\xi$. The notation above means that $a_{i, k}-\sum_{j=0}^m a_{i, k}^{(j)} \in$ $S^{-m-1}\left(\mathbb{R} \times \mathbb{R}^n \times \mathbb{R}^n\right)$ for all $m \geq 0$; see \cite{GrigisSjostrand1994}.

The construction is carried out iteratively by applying $P$ to $E \mathbf{f}$ and choosing $\phi_{ k}$and $a_{i, k}^{(j)}$ such that they eliminate the homogeneous terms of every degree. By applying $P$ and arranging terms with respect to the degree of homogeneity, we get 
\begin{equation}
\begin{aligned}
P E \mathbf{f} \sim & \frac{1}{(2 \pi)^n} \sum_{k=1}^3 \int e^{i \phi_k}\left\{\left(-i\tau\left(\partial_t\phi_k\right)^3-ib\partial_t\phi_k|\nabla_x\phi|^2\right)\left(
\sum_{j=1}^3|\xi|^{1-j}a_{j,k}\hat{f}_j
\right)\right. \\
& \left.+\sum_{i=1}^3 \sum_{j=0}^{\infty}\left[ P_2 a_{i, k}^{(j)}+\left(P_1 + P\phi\right) a_{i, k}^{(j-1)}+Pa_{i, k}^{(j-2)}\right]|\xi|^{-(i-1)} \hat{f}_i\right\} d \xi,
\end{aligned}
\end{equation}
where $a_{i,k}^{(-1)} = a_{i,k}^{(-2)} = 0$ for $i,k=1,2,3$, and 
\begin{equation}
\begin{aligned}
P_2=& -2\tau|\partial_t\phi_k|^2\partial_t -\tau\partial_t\phi_k\partial_t^2\phi_k -\tau \partial_t|\partial_t\phi_k|^2-\alpha\left|\partial_t \phi_k\right|^2+c^2\left|\nabla_x \phi_k\right|^2 \\
&+ 2b\partial_t\phi_k\nabla_x\phi_k\nabla_x +b\partial_t\phi_k \Delta\phi_k +b\partial_t|\nabla_x\phi_k|^2 +b|\nabla_x\phi_k|^2\partial_t \\
P_1=& 2i\alpha\partial_t\phi_k\partial_t -2i\tau\partial_t^2\phi_k\partial_t +i\tau \partial_t^2\phi_k\partial_t\\
&-2ic^2\nabla_x\phi_k\nabla_x -2ib\partial_t\nabla_x\phi_k\nabla_x - ib\Delta \phi_k\partial_t\\
&-ib\partial_t\phi_k\Delta -2ib\nabla_x\phi(\partial_t\nabla_x) +3i \tau\partial_t\phi_k\partial_t^2.
\end{aligned}
\end{equation}
Also, considering \eqref{iniitial_conditions_parametrix},
we have the following initial conditions. 
From $\left.E \mathbf{f}\right|_{t=0}=f_1$ we get:
\begin{align*}
\sum_{k=1}^3 a_{1,k}(0,x,\xi) &= 1, \\
\sum_{k=1}^3 a_{2,k}(0,x,\xi) &= 0, \\
\sum_{k=1}^3 a_{3,k}(0,x,\xi) &= 0;
\end{align*}
from $\left.\partial_tE \mathbf{f}\right|_{t=0}=f_2$:
\begin{align*}
\sum_{k=1}^3 \left( i \partial_t\phi_k(0,x,\xi)\, a_{1,k}(0,x,\xi) + \partial_t a_{1,k}(0,x,\xi) \right) &= 0, \\
\sum_{k=1}^3 \left( i \partial_t\phi_k(0,x,\xi)\, a_{2,k}(0,x,\xi) + \partial_t a_{2,k}(0,x,\xi) \right) &= |\xi|, \\
\sum_{k=1}^3 \left( i \partial_t\phi_k(0,x,\xi)\, a_{3,k}(0,x,\xi) + \partial_t a_{3,k}(0,x,\xi) \right) &= 0;
\end{align*}
and from $\left.\partial_t^2E \mathbf{f}\right|_{t=0}=f_3$:
\begin{align*}
\sum_{k=1}^3 \left( -|\partial_t\phi_k(0,x,\xi)|^2\, a_{1,k}(0,x,\xi) + 2 i \partial_t\phi_k(0,x,\xi)\, \partial_t a_{1,k}(0,x,\xi) + \partial_t^2 a_{1,k}(0,x,\xi)  + i a_{1,k}(0,x,\xi)\, \partial_{t}^2\phi_k(0)\right) &= 0, \\
\sum_{k=1}^3 \left( -|\partial_t\phi_k(0,x,\xi)|^2\, a_{2,k}(0,x,\xi) + 2 i \partial_t\phi_k(0,x,\xi)\, \partial_t a_{2,k}(0,x,\xi) + \partial_t^2 a_{2,k}(0,x,\xi)  + i a_{2,k}(0,x,\xi)\, \partial_{t}^2 \phi_k(0)\right)&= 0, \\
\sum_{k=1}^3 \left( -|\partial_t\phi_k(0,x,\xi)|^2\, a_{3,k}(0,x,\xi) + 2 i \partial_t\phi_k(0,x,\xi)\, \partial_t a_{3,k}(0,x,\xi) + \partial_t^2 a_{3,k}(0,x,\xi)  + i a_{3,k}(0,x,\xi)\, \partial_{t}^2 \phi_k(0)\right)&= |\xi|^2.
\end{align*}

To get rid of the terms homogeneous of degree 3, we choose $\phi_k(t,x,\xi)$ to be a solution of the eikonal equation plus initial conditions:

\[
p_3(\partial_t \phi_k, \nabla_x \phi_k) = 0, \quad \phi_k(0,x ; \xi)=x \cdot \xi.
\]
Explicitly, this gives:
\begin{equation}
-i \tau \partial_t \phi_k\left( (\partial_t \phi_k)^2  -\dfrac{b}{\tau}|\nabla_x \phi_k|^2 \right) =0.
\end{equation}
So, we see that there are three possibilities:
\begin{equation}\label{eikonal-wave}
\partial_t \phi_k = 
\begin{cases}
\pm \sqrt{\dfrac{b}{\tau}}\left|\nabla_x \phi_k\right|, & \text{if } k=1,2, \\
0, & \text{if } k=3,
\end{cases}
\qquad \phi_k(0,x;\xi) = x \cdot \xi.
\end{equation}
In particular, we see that $\phi_3(t,x;\xi) = x\cdot\xi$; the other two cases ($k=1,2$) give rise to typical solutions to the eikonal equation associated to the wave speed $\sqrt{b/\tau}$.

We next build the coefficients $a_{1,k}^{(j)}$ recursively by solving a sequence of transport equations stemming from the elimination of homogeneous terms of lower degrees. In the following we write $a_{1,k}(0,x,\xi) = a_{1,k}(0)$ to simplify the notation, and given that $\phi_k$ satisfies \eqref{eikonal-wave}, we set
\begin{equation*}
\rho_1(\xi)= \sqrt{\dfrac{b}{\tau}}|\xi|, \quad \rho_2(\xi)= -\sqrt{\dfrac{b}{\tau}}|\xi| \quad \text{and} \quad \rho_3(\xi) = 0.
\end{equation*}
To deal with the terms homogeneous of degree 2, we choose $a^{(0)}_{1,k}$ such that they satisfy
\begin{equation*}
P_2a_{1,k}^{(0)} = 0,
\end{equation*}
with initial conditions $a_{1,1}^{(0)}(0) =  a_{1,2}^{(0)}(0) = 0$ and $a_{1,3}^{(0)}(0)= 1$, which are the solutions of the algebraic system:

\[
\sum_{k=1}^{3} a_{1,k}^{(0)}(0) = 1,
\quad
\sum_{k=1}^{3} i \rho_k\, a_{1,k}^{(0)}(0) = 0,
\quad
\sum_{k=1}^{3} -\rho_k^2\, a_{1,k}^{(0)}(0) = 0.
\]
Now, for the terms homogeneous of degree 1, we solve
\begin{equation*}
P_2a_{1,k}^{(1)} = -\left(P_1+P\phi \right)a_{1,k}^{(0)}
\end{equation*}
with initial conditions obtained by solving the system:

\begin{align*}
&\sum_{k=1}^{3} a_{1,k}^{(1)}(0) = 0,\\
&\sum_{k=1}^{3} i \rho_k\, a_{1,k}^{(1)}(0) + \partial_t a_{1,k}^{(0)}(0) = 0,\\
&\sum_{k=1}^{3} -\rho_k^2\, a_{1,k}^{(1)}(0) + 2i \rho_k\, \partial_t a_{1,k}^{(0)}(0) +i\, a_{1,k}^{(0)}(0)\, \partial_{t}^2 \phi_k(0) = 0.
\end{align*}
In general, for the terms homogeneous of degree $2-j$ with $j \geq 2$, we proceed by induction, so we choose $a_{1, k}^{(j)}$ satisfying

\begin{equation*}
P_2a_{1,k}^{(j)} = -\left(P_1+P\phi \right)a_{1,k}^{(j-1)} - Pa_{1,k}^{(j-2)},
\end{equation*}
subject to initial conditions obtained by solving:

\begin{align*}
&\sum_{k=1}^{3} a_{1,k}^{(j)}(0) = 0,\\
&\sum_{k=1}^{3} i \rho_k\, a_{1,k}^{(j)}(0) + \partial_t a_{1,k}^{(j-1)}(0) = 0,\\
&\sum_{k=1}^{3} -\rho_k^2\, a_{1,k}^{(j)}(0) + 2i \rho_k\, \partial_t a_{1,k}^{(j-1)}(0) + \partial_t^2 a_{1,k}^{(j-2)}(0) + i\, a_{1,k}^{(j-1)}(0)\, \partial_{t}^2 \phi_k(0) = 0.
\end{align*}

Similarly, for the coefficients $a_{2,k}$ we get the following. The first transport equation is 
\begin{equation*}
P_2a_{2,k}^{(0)} = 0,
\end{equation*}
with initial conditions $a_{2,1}^{(0)}(0) = \dfrac{i\sqrt{\tau}}{2b}, \, a_{2,2}^{(0)}(0) = -\dfrac{i\sqrt{\tau}}{2b}$ and $a_{2,3}^{(0)}(0)= 0$, which are the  solutions of the system:

\[
\sum_{k=1}^{3} a_{2,k}^{(0)}(0) = 0,
\quad
\sum_{k=1}^{3} i \rho_k\, a_{2,k}^{(0)}(0) = |\xi|,
\quad
\sum_{k=1}^{3} -\rho_k^2\, a_{2,k}^{(0)}(0) = 0.
\]
To remove terms homogeneous of degree 1, we solve
\begin{equation*}
P_2a_{2,k}^{(1)} = -\left(P_1+P\phi \right)a_{2,k}^{(0)}
\end{equation*}
with initial conditions obtained from the system:

\begin{align*}
&\sum_{k=1}^{3} a_{2,k}^{(1)}(0) = 0,\\
&\sum_{k=1}^{3} i \rho_k\, a_{2,k}^{(1)}(0) + \partial_t a_{2,k}^{(0)}(0) = 0,\\
&\sum_{k=1}^{3} -\rho_k^2\, a_{2,k}^{(1)}(0) + 2i \rho_k\, \partial_t a_{2,k}^{(0)}(0) + i\, a_{2,k}^{(0)}(0)\, \partial_{t}^2 \phi_k(0) = 0.
\end{align*}
Now, for $j \geq 2$, we solve the transport equation 

\begin{equation*}
P_2a_{2,k}^{(j)} = -\left(P_1+P\phi \right)a_{2,k}^{(j-1)} - Pa_{2,k}^{(j-2)},
\end{equation*}
subject to the initial conditions coming from:

\begin{align*}
&\sum_{k=1}^{3} a_{2,k}^{(j)}(0) = 0,\\
&\sum_{k=1}^{3} i \rho_k\, a_{2,k}^{(j)}(0) + \partial_t a_{2,k}^{(j-1)}(0) = 0,\\
&\sum_{k=1}^{3} -\rho_k^2\, a_{2,k}^{(j)}(0) + 2i \rho_k\, \partial_t a_{2,k}^{(j-1)}(0) + \partial_t^2 a_{2,k}^{(j-2)}(0) + i\, a_{2,k}^{(j-1)}(0)\, \partial_{t}^2 \phi_k(0) = 0.
\end{align*}
Finally, for the coefficients $a_{3,k}$, the first transport equation is
\begin{equation}\label{eq: coefficientsa31}
P_2a_{3,k}^{(0)} = 0,
\end{equation}
with initial conditions $a_{3,1}^{(0)}(0) = a_{3,2}^{(0)}(0) = -\dfrac{\tau}{2b}$ and $a_{3,3}^{(0)}(0)= \dfrac{\tau}{b}$, obtained from:

\begin{equation}\label{initial-conditions-for-principal-term}
  \sum_{k=1}^{3} a_{3,k}^{(0)}(0) = 0,
\quad
\sum_{k=1}^{3} i \rho_k\, a_{3,k}^{(0)}(0) = 0,
\quad
\sum_{k=1}^{3} -\rho_k^2\, a_{3,k}^{(0)}(0) = |\xi|^2.
\end{equation}
The elimination of terms homogeneous of degree 1 implies solving
\begin{equation*}
P_2a_{3,k}^{(1)} = -\left(P_1+P\phi \right)a_{3,k}^{(0)},
\end{equation*}
with initial conditions obtained from
\begin{align*}
&\sum_{k=1}^{3} a_{3,k}^{(1)}(0) = 0,\\
&\sum_{k=1}^{3} i \rho_k\, a_{3,k}^{(1)}(0) + \partial_t a_{3,k}^{(0)}(0) = 0,\\
&\sum_{k=1}^{3} -\rho_k^2\, a_{3,k}^{(1)}(0) + 2i \rho_k\, \partial_t a_{3,k}^{(0)}(0) + i\, a_{3,k}^{(0)}(0)\, \partial_{t}^2 \phi_k(0) = 0;
\end{align*}
while for $j \geq 2$, we solve the transport equations
\begin{equation*}
P_2a_{3,k}^{(j)} = -\left(P_1+P\phi \right)a_{3,k}^{(j-1)} - Pa_{3,k}^{(j-2)},
\end{equation*}
subject to the initial conditions satisfying the system:
\begin{align*}
&\sum_{k=1}^{3} a_{3,k}^{(j)}(0) = 0,\\
&\sum_{k=1}^{3} i \rho_k\, a_{3,k}^{(j)}(0) + \partial_t a_{3,k}^{(j-1)}(0) = 0,\\
&\sum_{k=1}^{3} -\rho_k^2\, a_{3,k}^{(j)}(0) + 2i \rho_k\, \partial_t a_{3,k}^{(j-1)}(0) + \partial_t^2 a_{3,k}^{(j-2)}(0) + i\, a_{3,k}^{(j-1)}(0)\, \partial_{t}^2 \phi_k(0) = 0.
\end{align*}

The previously constructed operator $E(t): C_0^{\infty} \times C_0^{\infty} \times C_0^{\infty} \rightarrow C^{\infty}\left(\left(-\epsilon_0, \epsilon_0\right) \times \mathbb{R}^n\right)$ is called a parametrix for the MGT operator, or more precisely, for the solution operator of the initial value problem \eqref{eq:MGT-operator}-\eqref{eq:forward}. It can be extended to a linear operator from $\mathcal{E}^{\prime}\left(\mathbb{R}^n\right) \times \mathcal{E}^{\prime}\left(\mathbb{R}^n\right)\times \mathcal{E}^{\prime}\left(\mathbb{R}^n\right)$ to $C^{\infty}\left(\left(-\epsilon_0, \epsilon_0\right) ; \mathcal{D}^{\prime}\left(\mathbb{R}^n\right)\right)$. Then, for all $\mathbf{f}=\left[f_1, f_2,f_3\right] \in \mathcal{E}^{\prime}\left(\mathbb{R}^n\right) \times \mathcal{E}^{\prime}\left(\mathbb{R}^n\right)\times \mathcal{E}^{\prime}\left(\mathbb{R}^n\right), u=E(t) \mathbf{f}$ is such that
\[
P u=K \mathbf{f},\left.\quad u\right|_{t=0}=f_1,\left.\partial_t u\right|_{t=0}=f_2,\left.\partial^2_{t} u\right|_{t=0}=f_3,
\]
where 
$K: \mathcal{E}^{\prime}\left(\mathbb{R}^n\right) \times$ $\mathcal{E}^{\prime}\left(\mathbb{R}^n\right) \times \mathcal{E}^{\prime}\left(\mathbb{R}^n\right)\rightarrow C_0^{\infty}\left(\left(-\epsilon_0, \epsilon_0\right) \times \mathbb{R}^n\right)$ is smoothing. By restricting $E\mathbf{f}$ to the boundary, we obtain a microlocal representation of the measurement operator $\Lambda \mathbf{f}$ as an oscillatory integral:
\begin{equation}\label{eq:meas_op_FIO}
\Lambda \mathbf{f} \simeq \frac{1}{(2\pi)^n} \sum_{k=1}^2 \int 
e^{i\phi_{k}(x,t;\xi)}
\left( \sum_{j=1}^3|\xi|^{1-j} a_{j,k}(t,x;\xi)\widehat{f}_j(\xi)
\right)
\, d\xi \Big|_{\partial\Omega},
\end{equation}
where $\simeq$ means equality up to a smooth function.
\begin{remark}
    Notice term $k=3$ does not appear in the expression above. This is because it is associated with a pseudodifferential operator, and consequently it does not spatially propagate singularities, which means that it is regarded as a smooth error when restricted to the boundary.
\end{remark}

By relabeling $k=1,2$ with $k=\pm$, respectively, the operator $\Lambda$ admits the microlocal decomposition
\[
\Lambda = \Lambda_+ + \Lambda_-,
\]
where $\Lambda_\pm : C_0^\infty(\Omega) \to C^\infty((0,\infty)\times\partial\Omega)$ are Fourier integral operators of order zero, whose canonical relations are given by the graphs of the maps
\[
(y,\xi) \mapsto \big(\pm t(y,\xi), \gamma_{y,\xi}(\pm t(y,\xi)), |\xi|, \dot{\gamma}_{y,\xi}(\pm t(y,\xi))\big).
\]
Here, $\gamma_{y,\xi}(s)$ denotes the unit speed geodesic issued from the point $y$ in the direction $\xi$, for the metric $\frac{\tau}{b}dx^2$. The function $t(y,\xi)$ represents the travel time needed for this curve to reach the boundary $\partial \Omega$, that is, it is defined by the condition $\gamma_{y,\xi}(t(y,\xi)) \in \partial \Omega$ (assuming a non-trapping condition). The signs $\pm$ correspond to forward and backward propagation along the geodesic. This decomposition reflects the propagation of singularities for the MGT operator: two wave-type branches generate bicharacteristics that intersect the boundary and are therefore detected by the measurement operator. The remaining branch, which is not considered in \eqref{eq:meas_op_FIO}, corresponds to a non-propagating mode with vertical null-bicharacteristics. The associated singularities remain spatially localized and consequently, microlocally invisible from the boundary.

\subsection{Backward parametrix}


Let $\mathcal{K}$ be a compact subset of $\Omega$ and define the quantity $T_1(\mathcal{K}, \Gamma)$ as the minimal time such that for every $(x,\xi) \in S^* \mathcal{K}$, at least one of the geodesics $\gamma_{x,\xi}$ or $\gamma_{x,-\xi}$ reaches $\Gamma$ before time $T_1(\mathcal{K}, \Gamma)$. We say that $\mathcal{K}$ is visible from $\Gamma$ if $T_1(\mathcal{K}, \Gamma)<\infty$.

For a visible set $\mathcal{K}$ from $\Gamma$, we can always find an open subset $\Gamma'\Subset\Gamma$ such that $T_1(\mathcal{K},\Gamma')<\infty$. Then, we let $\chi\in C^\infty((0,T) \times \partial\Omega)$ be a compactly supported cut-off, with support contained in $\Gamma$ and such that $\chi =1$ in $\Gamma'$. We then consider the operator $\chi \Lambda$ and an observation time $T > T_1(\mathcal{K}, \Gamma)$. 

Following the ideas presented in \cite{MR2519863}, we back-propagate singularities by solving the system

\begin{equation}
\begin{cases}\label{boundary-system}
Pv = 0 & \text{in } \Omega\times (0, T) , \\
v|_{[0,T] \times \partial \Omega} = \chi h,
\end{cases}    
\end{equation}
and then study the normal operator obtained by composing the forward and backward propagation.

Provided that $\text{supp}(f)\subset \mathcal{K}$, any singularity $\text{WF}(f)$ will be propagated to a singularity in $\text{WF}(\chi\Lambda f)\subset T^*\big(\Gamma\times(0,T)\big)$.
Assuming for a moment that $\Lambda f$ has its wave front set contained in a small conic neighborhood of some
$((y_0,t_0), (\eta_0',\sigma_0))\in T^*\big(\Gamma\times(0,T)\big)$,
 obtained by the null-bicharacteristic flow out from $(x_0,\xi_0)\in WF(f)$ ---
this can be achieved by applying a microlocal cut-off --- 
we apply geometric optics again, this time over \eqref{boundary-system}, thus with data $h = \Lambda f$ posed on $\mathbb{R} \times \partial \Omega$. This generates two types of singularities: one type propagates to the initial time with spatial projections lying along geodesics close to $\gamma_{x_0, \xi_0}$;
    the spatial projections of the second type follow geodesics near the one issued from $(y_0,(\eta'_0,-\eta_0''))\in T^*\overline{\Omega}$, with $\eta_0=(\eta_0',\eta''_0)$ such that $p_3(y_0,\sigma_0,\eta_0)=0$ and outward pointing.
    These correspond to reflected rays (or broken geodesics) traveling forward in time (towards $t=T$).
Because we have that $h = \chi \Lambda f = 0$ for $t$ close to $T$, the second type of singularity is ruled out by our microlocal construction. 

Denoting by $A:H^1(\Gamma\times(0,T))\to L^2(\Omega)$ the bounded operator mapping $h\mapsto v''(0)$, with $v$ solution to the system \eqref{boundary-system}, we want to microlocally analyze the normal operator $A\chi \Lambda$ in our case of interest $\mathbf{f} = [0,0,f]$. 
This analysis is summarized in the following lemma.

\begin{lemma}\label{principal-symbol-composition}
The composition $A\chi \Lambda: L^2(\Omega) \rightarrow L^2(\Omega)$, with $\text{supp}(\chi) \subset \Gamma$, is a pseudodifferential operator of order zero with principal symbol 
\[
-\frac{1}{2}\left[\chi\left(\tau_{y, \eta}, \gamma_{y, \eta}\left(\tau_{y, \eta}\right)\right)+\chi\left(\tau_{y,-\eta}, \gamma_{y,-\eta}\left(\tau_{y,-\eta}\right)\right)\right].
\]
\end{lemma}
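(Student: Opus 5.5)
We follow the scheme of Stefanov--Uhlmann for thermoacoustic tomography and compute the principal symbol of $A\chi\Lambda$ by composing two geometric-optics parametrices. By \eqref{eq:meas_op_FIO}, for $\mathbf f=[0,0,f]$ we have $\chi\Lambda f\simeq \chi(\Lambda_+ +\Lambda_-)f$. Here $\Lambda_\pm$ are zeroth order FIOs with phases $\phi_{1,2}$. To leading order their amplitudes are $|\xi|^{-2}a_{3,\pm}^{(0)}$ times $|\xi|^2$, i.e.\ the transported amplitudes with initial values $a_{3,\pm}^{(0)}(0)=-\tau/(2b)$. The plan has three steps.

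First, we construct a parametrix for the backward boundary problem \eqref{boundary-system}. We localize microlocally near $((y_0,t_0),(\eta_0',\sigma_0))$ in the hyperbolic region, where $\sigma_0^2>\frac{b}{\tau}|\eta_0'|^2$. We seek $v$ as a sum of oscillatory integrals
\[
v=\sum_\pm\int e^{i\varphi_\pm(t,x,\eta',\sigma)}\,b_\pm(t,x,\eta',\sigma)\,\hat h(\eta',\sigma)\,d\eta'\,d\sigma .
\]
The phases $\varphi_\pm$ solve the wave eikonal equation $(\partial_t\varphi)^2=\frac b\tau|\nabla_x\varphi|^2$ with $\varphi|_{\partial\Omega}=t\sigma+y'\cdot\eta'$, and the two signs correspond to incoming and outgoing normal components. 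The amplitudes solve the same transport equations $P_2b=\dots$ as in Subsection~\ref{subsec:fwd_para}, with $b_\pm|_{\partial\Omega}$ chosen so that their sum equals $\chi$. The third, non-propagating branch ($\partial_t\varphi=0$) is incompatible with $\sigma_0\ne 0$, so only the two wave branches occur. Because $\chi h$ vanishes near $t=T$, the outgoing (reflected) branch has no singularities in the region relevant for $t\to 0$, as argued above. Only the branch that retraces $\gamma_{x_0,\pm\xi_0}$ back to $t=0$ survives. We then compute $v''(0)$: the dominant contribution is $(i\partial_t\varphi)^2 b = -\rho^2 b$, with $\rho=\sqrt{b/\tau}|\xi|$.

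Second, we compose. $A\chi\Lambda_\pm$ is a composition of FIOs whose canonical relations are mutually inverse graphs: the forward map $(y,\xi)\mapsto$ exit point and covector, followed by the backward flow, returns to $(y,\xi)$. By clean composition, $A\chi\Lambda_\pm$ is therefore a $\Psi$DO of order zero. For its principal symbol we use the following observation. The transport equation $P_2a=0$ on a wave branch is, after multiplication by the factor $\partial_t\phi$, the standard wave transport equation, and this factor is constant along bicharacteristics to leading order. Consequently the backward solution with Dirichlet data equal to the restriction of the forward solution is exactly the forward solution itself, multiplied by $\chi$ evaluated at the exit point. Hence at $t=0$ the principal amplitude of $v$ on the $\pm$ branch is
\[
\chi\big(\tau_{y,\pm\eta},\gamma_{y,\pm\eta}(\tau_{y,\pm\eta})\big)\,a_{3,\pm}^{(0)}(0)\,|\eta|^2 .
\]
Applying $\partial_t^2$ at $t=0$ multiplies this by $-\rho_\pm^2=-\frac b\tau|\eta|^2$. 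We normalize so that the final operator has order zero: $A$ carries the inverse factor $|\eta|^{-2}$ implicit in $\hat h$, mapping $H^1\to L^2$. Since $-\rho^2\cdot(-\tau/(2b))=\tfrac12|\eta|^2$, each branch contributes $\pm\tfrac12\chi(\cdot)$. The overall sign $-\tfrac12$ in the statement should come from the sign convention in the symbol $-i\tau\sigma^3$, or equivalently from $v''(0)$ versus $f$ via the time-reversal of $\partial_t^3$. We will track this sign carefully, using the fact that $\partial_t^3$ is odd under $t\mapsto T-t$. Summing the two branches gives the stated symbol.

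Third, we handle the error terms: the contributions of the branch $k=3$ and of the reflected branch are smoothing on $\mathcal K$, so they do not affect the principal symbol.

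The main obstacle is this second step, namely verifying that the backward parametrix for the third-order operator with only Dirichlet data is uniquely determined modulo smoothing on the hyperbolic set. One Dirichlet condition must suffice even though $P$ is third order, and this requires the non-propagating mode to carry no boundary singularity. We also need to pin down the constant $-\tfrac12$. We would first check both points in the constant-coefficient case by Fourier transform in $(t,x')$ and then transfer the result through the transport equations.
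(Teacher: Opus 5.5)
Your plan matches the paper's proof. The back-propagated solution uses the same phases $\phi_\pm$. Its principal amplitude solves the same linear homogeneous transport ODE along the same null bicharacteristic as $a^{(0)}_{3,\pm}$, so it equals $\chi$ at the exit point times the forward amplitude. The $\sigma=0$ branch and the reflected branch are discarded, and the symbol is read off from $\partial_t^2 v|_{t=0}$ and summed over the two branches.

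The genuine gap is your final step, the sign. Your own computation, $-\rho^2\cdot(-\tau/(2b))$, gives $+\frac12\chi$ per branch, and neither mechanism you propose can turn this into $-\frac12$:
\begin{itemize}
\item The backward problem \eqref{boundary-system} uses the same operator $P$; only the Cauchy data are placed at $t=T$. So the oddness of $\partial_t^3$ under $t\mapsto T-t$ never enters, and $\partial_t^2$ is even under that map anyway. Hence $B_0=\chi_0\,a^{(0)}_{3,\pm}$ holds with no sign change.
\item The symbol convention is irrelevant. One has $\partial_t^2 e^{i\phi_\pm}=\bigl((i\partial_t\phi_\pm)^2+\ldots\bigr)e^{i\phi_\pm}$ with $(i\partial_t\phi_\pm)^2|_{t=0}=-\frac{b}{\tau}|\xi|^2$, however the symbol of $P$ is written.
\end{itemize}

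In fact $+\frac12$ is forced. Take $\chi\equiv 1$ near both exit points; then back-propagation reproduces both wave branches of $u$. The third relation in \eqref{initial-conditions-for-principal-term}, $\sum_k-\rho_k^2a^{(0)}_{3,k}(0)=|\xi|^2$ with $\rho_3=0$, says each wave branch contributes exactly $\frac12 f$ to $u_{tt}(0)=f$. So the symbol must be $\frac12+\frac12=1$, not $-1$.

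The paper reaches $-\frac12$ only by writing $|\xi|^{-2}|\partial_t\phi_+|^2B_0$ where $|\xi|^{-2}(i\partial_t\phi_+)^2B_0$ belongs. It does keep the minus sign in the forward initial conditions, so this is an internal slip. Do not try to force a minus sign into the argument. Conclude that the principal symbol is $\frac12\bigl[\chi(\tau_{y,\eta},\gamma_{y,\eta}(\tau_{y,\eta}))+\chi(\tau_{y,-\eta},\gamma_{y,-\eta}(\tau_{y,-\eta}))\bigr]$. Theorem \ref{stability-thm} only uses that this symbol is nonvanishing, so nothing downstream changes.

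Two smaller points:
\begin{itemize}
\item Your power of $|\eta|$ is inverted. The amplitude of $v$ carries $|\eta|^{-2}$ inherited from the $f_3$ slot of the parametrix, not $|\eta|^{2}$. This factor is cancelled exactly by $\partial_t^2$, so $A$ needs no extra normalization.
\item The issue you flag, that one Dirichlet condition should suffice for a third-order operator, is real. The lateral boundary is characteristic for $p_3=i\sigma(b|\xi|^2-\tau\sigma^2)$ at $\sigma=0$. The paper does not resolve this either; it simply asserts that the stationary mode never reaches the boundary.
\end{itemize}
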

\begin{proof}
The idea is to construct an oscillatory integral formulation of $A\chi \Lambda f$ using a geometric optics expansion based on the two characteristic roots $\pm \sqrt{\dfrac{b}{\tau}}|\xi|$. In the microlocal representation of $\Lambda f$, there is also a zero characteristic root, but we disregard it because the singularities it generates are stationary and therefore never interact with the boundary $\partial \Omega$.

Now, since $\mathcal{K}$ is visible from $\Gamma$, we can construct a parametrix for the solution of the boundary value problem \eqref{boundary-system} with zero Cauchy data at $t = T$ and boundary data $\chi \Lambda_+ f$, of the form

\begin{equation}\label{eq:anzatsforboundary}
 v(x,t) = (2\pi)^{-n} \int e^{i\tilde{\phi}(x,t,\xi)} B(x,t, \xi) \, |\xi|^{-2}\hat{f}(\xi) \, d\xi.   
\end{equation}
For this, by means of microlocal cut-offs and Egorov's theorem, we can always assume that $f$ has a wavefront set contained in a conic neighborhood of some $(x_0,\xi_0)\in T^*\mathcal{K}$, and consequently, the construction of the phase and amplitude is done near the associated null bicharacteristic.

The phase function $\tilde\phi$ is chosen to satisfy the same eikonal equation and the same boundary condition as the outgoing phase $\phi_+$ in the forward parametrix. Hence, by uniqueness of the local solution to this boundary value problem, we obtain
$
\tilde\phi=\phi_+.
$
The boundary parametrix is then constructed using the same phase as in the forward parametrix, while the difference between the two constructions is encoded in the amplitude $B$. 
Our goal is to determine $B$ at $t = 0$. The amplitude $B$ satisfies the same transport equation as in the construction of the forward parametrix, but now with initial data posed on $\mathbb{R} \times \partial \Omega$. Specifically, $B$ is a classical amplitude of order zero, with principal symbol $B_0$ satisfying the equation \eqref{eq: coefficientsa31}, which takes the form of a linear homogeneous ODE along the null bicharacteristics, with boundary value given by $\chi a|_{\Gamma\times(0,T)}$ with $a:=a^{(0)}_{3,1}$ defined by \eqref{eq: coefficientsa31}-\eqref{initial-conditions-for-principal-term}. The functions $B_0$ and $a$ are solutions to the same transport equation but with boundary values differing only by a multiplicative factor. For each $(x,\xi)\in T^*\mathcal{K}$, this factor is given by the evaluation of $\chi(x,t)$ at the exit point and time of the associated geodesic $\gamma_{x,\xi}$, namely $\chi_0(x,\xi):=\chi(\gamma_{x,\xi}(\tau_{x,\xi}),\tau_{x,\xi})$, then
 $B_0(x,\xi)|_{t=0}= \chi_0 a|_{t=0}=-\dfrac{\chi_0 \tau}{2b}$.
Evaluating $\partial_t^2v$ at $t = 0$ gives a representation of $A\chi \Lambda_+$ as a pseudodifferential operator with principal symbol 
\[
|\xi|^{-2}|\partial_t\phi_+|^2B_0\big|_{t=0} = \frac{b}{\tau}B_0(x,0,\eta) = -\frac{1}{2}\chi_0(x,\xi),
\]
which follows by differentiating twice \eqref{eq:anzatsforboundary} and recalling \eqref{eikonal-wave}. 

The same reasoning applies to the term corresponding to the negative characteristic root. Consequently, taking into account both contributions from singularities traveling along $\gamma_{y_0, \eta_0}$ and $\gamma_{y_0, -\eta_0}$, we find that $A\chi\Lambda$ is a zero order pseudodifferential operator with principal symbol given by
\[
-\frac{1}{2} \left[ \chi(\tau_{y_0, \eta_0}, \gamma_{y_0, \eta_0}(\tau_{y_0, \eta_0})) + \chi(\tau_{y_0, -\eta_0}, \gamma_{y_0, -\eta_0}(\tau_{y_0, -\eta_0})) \right].
\]
\end{proof}
The following theorem establishes the Fredholm property, ellipticity, and microlocal stability estimate for recovering $f$ from partial boundary data.
\begin{theorem}\label{stability-thm}
Assume that $\partial\Omega$ is strictly convex with respect to the the metric $\frac{\tau}{b}dx^2$, and that $T_1(\mathcal{K}, \Gamma) < T < \infty$. 
Then the following statements hold:
\begin{enumerate}

    \item The operator $A\chi \Lambda : L^2(\mathcal{K}) \to L^2(\Omega)$ is an elliptic pseudodifferential operator of order zero. Here, functions in $L^2(\mathcal{K})$ are extender by zero to all $\Omega$.
    \item The operator $A\chi \Lambda : L^2(\mathcal{K}) \to L^2(\Omega)$ is Fredholm.

    \item There exists a constant $C > 0$ and a smoothing operator $R$, such that
\begin{equation}
\|f\|_{L^2(\Omega)} \leq C \left(\|\Lambda f\|_{H^1(\Sigma)} + \|Rf\|_{L^2(\Omega)}\right).
\end{equation}
\end{enumerate}
\end{theorem}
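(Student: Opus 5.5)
The plan is to derive all three statements from Lemma \ref{principal-symbol-composition} and the standard calculus of elliptic pseudodifferential operators, following the Stefanov--Uhlmann scheme \cite{MR2519863}.

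\textbf{Ellipticity.} By Lemma \ref{principal-symbol-composition}, $A\chi\Lambda$ is a zero order pseudodifferential operator with principal symbol
\[
-\tfrac12\left[\chi\bigl(\tau_{y,\eta},\gamma_{y,\eta}(\tau_{y,\eta})\bigr)+\chi\bigl(\tau_{y,-\eta},\gamma_{y,-\eta}(\tau_{y,-\eta})\bigr)\right].
\]
Since $\chi\ge 0$, the two terms cannot cancel, so it suffices to show that at least one of them is positive for every $(y,\eta)\in S^*\mathcal{K}$. The cut-off is chosen with $\Gamma'\Subset\Gamma$, $T_1(\mathcal{K},\Gamma')<T$, and $\chi=1$ on $\Gamma'\times[\delta,T-\delta]$. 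Here $\delta>0$ is chosen so that $T-\delta>T_1(\mathcal{K},\Gamma')$. The lower bound $\delta$ on exit times is available because $\operatorname{dist}(\mathcal{K},\partial\Omega)>0$ and the speed $\sqrt{b/\tau}$ is bounded.

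By the definition of $T_1$, for each such $(y,\eta)$ one of $\gamma_{y,\pm\eta}$ hits $\Gamma'$ at a time in $[\delta,T-\delta]$, so the corresponding $\chi$-term equals $1$. The symbol is therefore bounded above by $-\tfrac12$ on $S^*\mathcal{K}$, and by homogeneity on the whole conic set. Strict convexity of $\partial\Omega$ for $\tfrac{\tau}{b}dx^2$ is what makes this evaluation legitimate. It ensures the exit times $\tau_{y,\pm\eta}$ depend smoothly on $(y,\eta)$, and that geodesics meet $\partial\Omega$ transversally, with no glancing rays. Consequently the forward and backward parametrices of Sections \ref{subsec:fwd_para} and the backward construction are valid FIOs near every relevant bicharacteristic, and the microlocal cut-offs patch together by a finite partition of $S^*\mathcal{K}$ using compactness.

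\textbf{Fredholm property and estimate.} Take a parametrix $B$, a zero order pseudodifferential operator with principal symbol equal to the inverse of the one above on a neighborhood of $\mathcal{K}$. Then $BA\chi\Lambda = I + R_1$ on $L^2(\mathcal{K})$, with $R_1$ smoothing, and symmetrically $A\chi\Lambda B = I + R_2$. Because $\mathcal{K}$ is compact, the operator $R_1$ restricted to functions supported in $\mathcal{K}$ is compact on $L^2$. This gives a left regularizer, hence finite-dimensional kernel and closed range. For the range statement, one argues with the cokernel in $L^2(\Omega)$ microlocally near $\mathcal{K}$, as in \cite{MR2519863}. If that is delicate, I would settle for the semi-Fredholm statement, which is all that item 3 needs.

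For item 3, write $f=BA\chi\Lambda f - R_1 f$. Then
\[
\|f\|_{L^2}\le \|B\|\,\|A\|_{H^1(\Sigma)\to L^2}\,\|\chi\Lambda f\|_{H^1(\Sigma)}+\|R_1 f\|_{L^2}.
\]
Boundedness of $A:H^1(\Gamma\times(0,T))\to L^2(\Omega)$ is taken as stated before Lemma \ref{principal-symbol-composition}. Multiplication by the smooth cut-off $\chi$ is bounded on $H^1$, and $\operatorname{supp}\chi\subset\Sigma$. Together these give the estimate with $R=-R_1$.

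\textbf{Main obstacle.} The hardest part is justifying that the principal symbol is uniformly bounded away from zero on all of $T^*\mathcal{K}\setminus 0$, rather than just pointwise. This requires the following:
\begin{itemize}
\item continuity of the exit times $\tau_{y,\pm\eta}$, which follows from transversal exit under strict convexity;
\item a careful choice of $\chi$ equal to $1$ on a compact neighborhood of the exit set;
\item verification that the discarded reflected and stationary ($k=3$) branches contribute only smoothing terms to $A\chi\Lambda$.
\end{itemize}
For the last point, I would show that the reflected singularities are removed because $\chi$ vanishes near $t=T$, and that the stationary branch is removed because $\mathcal{K}$ is disjoint from the support of $\chi$ in space. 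Both are then absorbed into $R$.
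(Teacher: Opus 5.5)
Your outline matches the paper's proof: the symbol from Lemma~\ref{principal-symbol-composition} is nonvanishing because $T>T_1(\mathcal K,\Gamma)$, an elliptic parametrix gives $I+R$ with $R$ smoothing, and boundedness of $A$ then passes from $\|A\chi\Lambda f\|_{L^2}$ to the boundary data. The details you add are correct and fill in what the paper leaves implicit: taking $\chi\ge 0$ with $\chi=1$ on $\Gamma'\times[\delta,T-\delta]$, transversal exits from strict convexity, and compactness of $R$ on $L^2(\mathcal K)$.

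Your hesitation on item 2 is also justified. Ellipticity gives only a left regularizer, i.e.\ finite-dimensional kernel and closed range. It does not give finite codimension of the range in $L^2(\Omega)$: the identity, viewed as extension by zero $L^2(\mathcal K)\to L^2(\Omega)$, is an elliptic operator of order zero whose range has infinite codimension. So the paper's one-line claim that Fredholmness follows directly from ellipticity overstates what is true. As you say, the semi-Fredholm property is all item 3 needs.

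The genuine gap is the boundedness of $A:H^1(\Gamma\times(0,T))\to L^2(\Omega)$, which you take as given. You inherit it from the paper, whose proof rests on the same claim, and it fails by order counting:
\begin{itemize}
\item For data $(0,0,f)$, the amplitude in \eqref{eq:meas_op_FIO} carries the factor $|\xi|^{-2}$, so $\Lambda$ has order $-2$.
\item In \eqref{eq:anzatsforboundary} the backward solution has amplitude $|\xi|^{-2}B$, and $\partial_t^2 v|_{t=0}$ is of order $0$ in $f$, so $A$ raises the order by two.
\item Hence $A$ is bounded from $H^2(\Sigma)$ to $L^2(\Omega)$, but it sends $H^1(\Sigma)$ only into $H^{-1}(\Omega)$.
\end{itemize}

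In fact item 3 cannot hold as written. The operator $P$ is strictly hyperbolic, with characteristic roots $0,\pm\sqrt{b/\tau}\,|\xi|$. So the solution with data $(0,0,f)$ lies in $H^2$ of space-time when $f\in L^2$, and in $H^1$ when $f\in H^{-1}$. Interpolation and the trace theorem then give $\|\Lambda f\|_{H^1(\Sigma)}\lesssim\|f\|_{H^{-1/2}}$, so $\Lambda:L^2(\mathcal K)\to H^1(\Sigma)$ is compact. Take an orthonormal sequence $f_k$ in $L^2(\mathcal K)$ (assuming $|\mathcal K|>0$). Then $\|\Lambda f_k\|_{H^1(\Sigma)}+\|Rf_k\|_{L^2}\to 0$, while $\|f_k\|_{L^2}=1$.

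The fix is to measure the data in $H^2(\Sigma)$. With $\|A\|_{H^2(\Sigma)\to L^2(\Omega)}$ in place of the $H^1$ bound, your chain of inequalities goes through unchanged and gives $\|f\|_{L^2}\le C(\|\Lambda f\|_{H^2(\Sigma)}+\|Rf\|_{L^2})$.
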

\begin{proof}

\textit{(1) Ellipticity:} The principal symbol of \( A\chi \Lambda \), computed in the Lemma \ref{principal-symbol-composition}, is homogeneous of order zero and is nonzero since $T_1(\mathcal{K},\Gamma)< T$, implying the operator is elliptic and of order zero.

\textit{(2) Fredholm property:} This is a direct implication of the ellipticity property.

\textit{(3) Stability inequality:} Since $A\chi \Lambda$ is elliptic, there exists a $\Psi$DO of order zero $Q$, such that $QA\chi \Lambda = I + R$ where $R$ is a smoothing operator. We then get
\[
\|f\|_{L^2} \leq C\left(\|A\chi\Lambda  f\|_{L^2} + \|R f\|_{L^2}\right).
\]
Since $A$ is bounded from $H^1(\Sigma)$ to $L^2(\Omega)$, we have
\[
\|f\|_{L^2} \leq C'\big(\|\Lambda f\|_{H^1} + \|Rf\|_{L^2}\big).
\]

\end{proof}

We remark that Theorem \ref{stability-thm} is formulated under partial observation. 
When global boundary data is available --- and under appropriate geometric conditions --- the measurement operator is injective (thanks to Corollary \ref{cor:uniqueness}) and the residual term can be removed from the previous inequality. In this case, the estimate reduces to the global stability result as stated in Theorem \ref{main-thm}.
\begin{corollary}[Stability for the inverse problem with complete data]\label{cor:stability-thm}
    Let $\Omega$ be a bounded open set with a smooth and strictly convex boundary for the metric $g=\frac{\tau}{b}dx^2$, and let $\mathcal{K}\subset\Omega$ compact and such that $T_1(\mathcal{K},\partial\Omega)<\infty$.  Assume that  $x^n=\operatorname{dist}_g(x,\partial\Omega)$ is smooth with strictly convex level sets $\{x^n=s\}$ for $0\le s\le T$, with $T>T_1(\mathcal{K},\partial\Omega)$. Then, there is $C>0$ such that
    \[
    \|f\|_{L^2(\mathcal{K})} \leq C\|\Lambda f\|_{H^1(\partial\Omega\times (0,T))}\qquad\forall f\in L^2(\mathcal{K}). 
    \]
\end{corollary}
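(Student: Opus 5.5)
The plan is the standard compactness–uniqueness argument that removes the smoothing term in Theorem \ref{stability-thm}, using the injectivity of Corollary \ref{cor:uniqueness}. First I check that the hypotheses put us in both settings at once. With $\Gamma=\partial\Omega$ and $T>T_1(\mathcal{K},\partial\Omega)$, Theorem \ref{stability-thm} applies with $\chi$ chosen equal to $1$ on a slightly smaller open subset $\Gamma'\Subset\partial\Omega$ so that $T_1(\mathcal{K},\Gamma')<T$. This gives
\[
\|f\|_{L^2(\mathcal{K})}\le C\left(\|\Lambda f\|_{H^1(\partial\Omega\times(0,T))}+\|Rf\|_{L^2(\Omega)}\right)\qquad \forall f\in L^2(\mathcal{K}),
\]
with $R$ smoothing. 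Since $\mathcal{K}$ is compact and $\Omega$ bounded, the restriction of $R$ to $L^2(\mathcal{K})$ is compact as a map into $L^2(\Omega)$.

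Next comes injectivity on $L^2(\mathcal{K})$. Corollary \ref{cor:uniqueness} gives $\Lambda f=0\Rightarrow f=0$ in $\Omega_T$, and Theorem \ref{Theorem-unique-continuation} is stated for smooth $f$. I would first extend it to $f\in L^2(\mathcal{K})$ with $\Lambda f=0$. One route is the elliptic estimate above: it shows $f$ agrees with $-Q^{-1}$-type corrections modulo smoothing. More precisely, $QA\chi\Lambda f=0$ gives $f=-Rf\in C^\infty$, so $f$ is smooth and the theorem applies directly. A second point is needed: $\mathcal{K}\subset\Omega_T$, i.e. $\operatorname{dist}_g(x,\partial\Omega)<T$ on $\mathcal{K}$. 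This follows because every geodesic through a point of $\mathcal{K}$ reaches $\partial\Omega$ within time $T_1<T$ in one direction, so the distance is less than $T$. Hence $\Lambda$ is injective on $L^2(\mathcal{K})$.

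Finally I argue by contradiction. Suppose there are $f_k\in L^2(\mathcal{K})$ with $\|f_k\|=1$ and $\|\Lambda f_k\|_{H^1}\to0$. Pass to a subsequence with $f_k\rightharpoonup f$ weakly in $L^2(\mathcal{K})$. Compactness of $R$ gives $Rf_k\to Rf$ strongly. The estimate applied to $f_k-f_l$ shows $(f_k)$ is Cauchy, so $f_k\to f$ strongly and $\|f\|=1$. Continuity of $\Lambda:L^2(\mathcal{K})\to H^1$ is the mapping property needed for the stability statement to make sense, and I would take it from the forward FIO description \eqref{eq:meas_op_FIO}. It gives $\Lambda f=0$, hence $f=0$ by injectivity, which is a contradiction.

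The main obstacle is the regularity and support bookkeeping in the injectivity step: upgrading Theorem \ref{Theorem-unique-continuation} from smooth $f$ to $L^2$ data via the elliptic parametrix, and ensuring $\mathcal{K}\subset\Omega^T$ so that the vanishing region $Q^T$ covers $\mathcal{K}$ at $t=0$. I would handle the first by the parametrix identity $f=-Rf$. If that is too cavalier microlocally, the fallback is mollification in time, since the equation has time-independent coefficients: convolving $u$ in $t$ preserves $Pu=0$ and the vanishing Cauchy data on slightly shorter intervals, and a slight decrease of $T$ is allowed because the inequality $T>T_1$ is strict.
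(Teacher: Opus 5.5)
Your proposal is correct and follows the route the paper indicates in the paragraph preceding the corollary: remove the smoothing remainder in Theorem \ref{stability-thm} by the standard compactness--uniqueness argument, using the injectivity from Corollary \ref{cor:uniqueness}. You also supply two details the paper leaves implicit, namely the upgrade $f=-Rf\in C^\infty$ so that Theorem \ref{Theorem-unique-continuation} applies, and the inclusion $\mathcal{K}\subset\Omega^T$ coming from $T_1(\mathcal{K},\partial\Omega)<T$. Drop the time-mollification fallback: it is unnecessary, and it would destroy the conditions $u(\cdot,0)=u_t(\cdot,0)=0$ that the Carleman argument relies on. Also, to pass to the limit you only need continuity of $\Lambda$ into $L^2(\partial\Omega\times(0,T))$, which follows from the cited well-posedness result rather than from the microlocal parametrix.
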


\section{Acknoledgment}

This work was partially supported by the Agencia Nacional de Investigaci\'on y Desarrollo (ANID), Grant FONDECYT  N$^\circ$1251207.

\section{Appendix}\label{APPENDIX}
For the sake of completeness, we will state and prove the Local Carleman estimate that was used in \S\ref{STEP-1}. Before stating the result, we present the notation. For scalar functions, we denote $\partial_{x^i} \varphi=\varphi_i, \varphi_x=\left(\varphi_1, \ldots, \varphi_n\right), \varphi^{\prime}=$ $\left(\varphi_x, \varphi_t\right) ; \varphi_{x x}=\left(\varphi_{j k}\right) ;\langle\xi, \eta\rangle_g=g^{i j} \xi_i \eta_j$ is the inner product induced by the metric $g$ in the cotangent bundle, and $|\xi|_g^2=g^{i j} \xi_i \xi_j$ is the corresponding norm; $\left(\nabla_g \varphi\right)=\left(g^{i j} \varphi_i\right)_j$ the gradient of $\varphi$ for the metric $g$; $\operatorname{div}_x\left(\left(f^i\right)_i\right)=\partial_{x^i} f^i$ the divergence operator (for the euclidean metric). 
\begin{lemma}\label{lemma-local-carleman-benjamin}
\textbf{(Local Carleman Estimate for the wave operator)}. Let $U$ be a neighborhood of a point $y \in \partial \Omega$ where local boundary coordinates are defined. Let $Q_0^y$ and $\Gamma_0^y$ be as in \eqref{definition-local-spaces}. Then, there exist $\tau_0>0$ and $C>0$, depending on $Q_0$ and $\Gamma_0$, such that for all $u \in C^2\left(\bar{Q}_0\right)$ satisfying $u=\frac{\partial u}{\partial \nu}=0$ on $\partial Q_0 \cap \partial \Omega$, and $u(x, 0)=0$ or $u_t(x, 0)=0$ in $\Omega$, the next estimate holds for every $\tau \geq \tau_0$:

\begin{equation}\label{proof-local-carleman-wave-equation}
 C \tau \int_{Q_0^y}\left(\left|v_t\right|^2+\left|v_x\right|_g^2\right)+C \tau^3 \int_{Q_0^y}|v|^2 d x d t \leq \int_{Q_0^y}\left|P_\tau v\right|^2 d x d t-\int_{Q_0^y}\left(\operatorname{div}_x Y+\partial_t Z\right) d x d t  
\end{equation}
where $P_\tau=e^{\tau \varphi} P_0 e^{\tau \varphi}$, and $v=\tilde{u} e^{\tau \varphi}$ with $\tilde{u}(t, x)=\chi\left(x^{\prime}\right) u(t, x)$, for $\chi \in C_0^{\infty}(U)$.
\end{lemma}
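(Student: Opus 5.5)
This is the classical local Carleman estimate for the wave operator $P_0=\partial_t^2-\partial_{x^j}(g^{ij}\partial_{x^i})$ with the weight $\varphi=(R-x^n)^2-at^2-r^2$. I would prove it by the standard conjugation and integration-by-parts argument, keeping every boundary contribution in divergence form in $Y$ and $Z$.

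\textbf{Conjugation and splitting.} With $v=\tilde u e^{\tau\varphi}$ I write $e^{\tau\varphi}P_0(e^{-\tau\varphi}v)$ (the paper's sign convention in $P_\tau$ should be read this way) as $P_\tau v = I_1+I_2$.

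The symmetric part is
\[
I_1=v_{tt}-\partial_i(g^{ij}v_j)+\tau^2\bigl(\varphi_t^2-|\varphi_x|_g^2\bigr)v .
\]
The antisymmetric part is
\[
I_2=-2\tau\bigl(\varphi_t v_t-\langle\varphi_x,v_x\rangle_g\bigr)+\Psi v ,
\]
where $\Psi$ is chosen as $-\tau(\varphi_{tt}-\partial_i(g^{ij}\varphi_j))$ plus a free term $\tau\tilde\gamma$, adjusted to balance the energy terms. Then
\[
|P_\tau v|^2\ge 2I_1I_2 .
\]
I expand $2I_1I_2$ term by term and move every derivative of a product into the form $\operatorname{div}_xY+\partial_tZ$. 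This reproduces the expression for $Z$ quoted in \eqref{equation for Z}, and in it $\Phi=\varphi_t^2-|\varphi_x|_g^2$.

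\textbf{Interior positivity.} The remaining bulk terms form a quadratic form in $(v_t,v_x,v)$. Its coefficients involve $\tau\varphi_{tt}$, $\tau\nabla^2_g\varphi$, derivatives of $g^{ij}$, and $\tau^3$ times a cubic expression in $\nabla\varphi$ and its Hessian.

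At the $\tau^3|v|^2$ level one needs the weight to be strongly pseudoconvex. In the present coordinates $|\varphi_x|_g^2=4(R-x^n)^2$ and $\varphi_t=-2at$. On $Q_0$ one has $\varphi>0$, and since $0<a<1$ this gives $|\varphi_x|_g^2-\varphi_t^2\ge c>0$, i.e.\ $\Phi$ stays negative on $Q_0$.

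At the gradient level, the Hessian of $x^n$ enters through $-\tfrac12\partial_{x^n}g^{\alpha\beta}$. Here the strict convexity \eqref{positive-definiteness-inequality} of the level sets $\{x^n=s\}$ provides the correct sign on the tangential components $|v_{x'}|^2$. The normal and time components are handled by choosing $a<1$ and $\tilde\gamma$ appropriately.

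Lower-order terms arise from derivatives of $g$ and from $\chi(x')$. I would absorb them by taking $R$ large, $r$ close to $R$ so that $Q_0$ is a thin region near $x^n=0$, and then $\tau\ge\tau_0$.

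\textbf{Main obstacle.} I expect the algebraic verification that the gradient quadratic form is positive definite to be the hardest step. The first thing I would try is the eigenvalue check: $2\tau(1-a)$ on $v_t$, $\tau\kappa_s$ from the second fundamental form on tangential directions, and $2\tau$ in the normal direction. Then I would pick the free constant in $\Psi$ between $a$ and $1$, so that the same term also produces the positive $\tau^3|v|^2$ term. This is exactly the route of [Lemma 1, \cite{BENJAMIN-ACOSTA-THERMO}], which I would follow closely.

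Finally, the hypothesis $u(x,0)=0$ or $u_t(x,0)=0$ is only used afterwards, when the $\partial_tZ$ integral is evaluated. Here no boundary terms are discarded, since they are kept explicitly on the right-hand side, so the stated inequality holds as written.
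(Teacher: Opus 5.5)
Your overall route is the one the paper follows (Lemma~1 of \cite{BENJAMIN-ACOSTA-THERMO}): conjugate, split into $P_\pm$, bound $|P_\tau v|^2$ from below by the cross term, collect exact derivatives into $\operatorname{div}_xY+\partial_tZ$, use \eqref{positive-definiteness-inequality} in tangential directions, and then fix $a<1$, a balancing constant and $R$. The one place you deviate is how the balancing constant enters, and as written that step fails.

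The paper uses the split with no free term. It then writes $|P_+v|^2\ge\frac{-\tilde\gamma}{\Phi\tau}|P_+v|^2$, which is legitimate for $\tau\ge\tau_0$ because $-\Phi>4r^2$ on $Q_0$. After expanding and dropping a nonnegative term, this gives $2\tau\tilde\gamma(|v_t|^2-|v_x|_g^2)-\tilde\gamma\tau^3\Phi|v|^2$ plus divergences.

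You instead add $\tau\tilde\gamma v$ to $I_2$, and there are two problems with this.
\begin{enumerate}
\item This alone breaks the identity $P_\tau v=I_1+I_2$. The same term must be removed from $I_1$; the extra cross terms this creates are divergences plus $O(\tau^2)|v|^2$, which the $\tau^3$ term absorbs.
\item The sign is backwards. Let $\Psi_0:=\partial_j(g^{ij}\varphi_i)-\varphi_{tt}$; since $\varphi_{tx}=0$, the unbalanced $|v_t|^2$ coefficient in $2I_1I_2$ is $2\tau(\varphi_{tt}+\partial_j(g^{ij}\varphi_i))-2\tau\Psi_0=4\tau\varphi_{tt}=-8a\tau<0$. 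Your extra term contributes
\[
2\Bigl(v_{tt}-\partial_j\bigl(g^{ij}\partial_iv\bigr)+\tau^2\Phi v\Bigr)\tau\tilde\gamma v
=2\tau\tilde\gamma\bigl(|v_x|_g^2-|v_t|^2\bigr)+2\tau^3\tilde\gamma\Phi|v|^2+\operatorname{div}_x(\cdots)+\partial_t(\cdots).
\]
With $\tilde\gamma>0$, which is what ``between $a$ and $1$'' means, this makes the $|v_t|^2$ coefficient more negative. Because $\Phi<0$ on $Q_0$, it also lowers the $\tau^3$ term instead of producing it. So the positivity you single out as the main step cannot be reached with that choice.
\end{enumerate}

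The repair is to subtract: put $\tau(\Psi_0-\tilde\gamma)v$ in $I_2$ and $+\tau\tilde\gamma v$ in $I_1$. This gives the same energy terms as the paper's device, modulo divergences and $O(\tau^2)|v|^2$.

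Work in boundary normal coordinates, where $g^{nj}=\delta_{nj}$ and $\varphi$ depends only on $x^n,t$. There the $|v_x|_g^2$ contributions from $\varphi_{tt}$, $\partial_j(g^{ij}\varphi_i)$ and $\Psi_0$ cancel, and the bulk is bounded below by
\[
2\tau(\tilde\gamma-4a)|v_t|^2+2\tau(4-\tilde\gamma)|v_n|^2+2\tau\bigl(4(R-x^n)\kappa-\tilde\gamma\bigr)|v_{x'}|_{\tilde g}^2 .
\]
So you need $4a<\tilde\gamma<4$ and $(R-\delta)\kappa>\tilde\gamma/4$.

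That last condition is the real reason $R$ must be large. The convexity term $4\tau(R-x^n)\,\partial_ng^{\alpha\beta}v_\alpha v_\beta$ has to dominate the balancing term in tangential directions. It is not about absorbing lower-order terms: in these coordinates no other terms involve derivatives of $g$, and $\chi$ creates no commutator inside this lemma because it is already built into $v$.

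The $\tau^3$ positivity does not need the free term at all. The natural coefficient is $2\tau^3(\Phi_t\varphi_t-\langle\Phi_x,\varphi_x\rangle_g)=32\tau^3\bigl((R-x^n)^2-a^3t^2\bigr)$. Since $a<1$, this exceeds $32\tau^3r^2$ on $Q_0$. Negativity of $\Phi$ is therefore not the relevant condition there.

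Finally, a careful derivation gives $-2\tau\varphi_t|v_t|^2$ in $Z$, as in the paper's own computation of $I_1$. It does not give the $-2\tau\varphi|v_t|^2$ printed in \eqref{equation for Z}, so your $Z$ will not literally match that formula.
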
 

\begin{remark}
In the previous estimate initial and boundary conditions by applying the Gauss-Ostrogradskii formula. Additionally, by using a smooth finite partition of unity subordinate to an open cover $\left\{U_i\right\}$, we can follow the same strategy used in the proof of Lemma \ref{bound for both norms} to derive the Carleman estimate obtained in \cite[Lemma 1]{BENJAMIN-ACOSTA-THERMO}:
$$
\begin{gathered}
\int_{Q_0}\left(\tau^3|u|^2+\tau\left|u^{\prime}\right|^2\right) e^{2 \tau \varphi} d x d t+\int_{\Gamma_0}\left(\left\langle A u^{\prime}, u^{\prime}\right\rangle+2\left\langle b, u^{\prime}\right\rangle u+d|u|^2\right) d S \\
\leq C \int_{Q_0}|P_0 u|^2 e^{2 \tau \varphi} d x d t.
\end{gathered}
$$
Here $d S$ denotes the surface measure on $\Gamma_0$, and the matrix-function $A(x, t)$, the vector-function $b(x, t)$, and the scalar-function $d(x, t)$ are continuous functions depending on $\varphi$ and $Q_0$. 
\end{remark}
\begin{proof}
The goal is to derive the Carleman estimate by bounding from below the integral $\int_{Q_0} |P_\tau v|^2 \, dx \, dt$.
We compute the derivatives of $\tilde{u}$:
\[
\tilde{u}_t = (v_t - \tau \varphi_t v) e^{-\tau \varphi}, \quad \tilde{u}_i = (v_i - \tau \varphi_i v) e^{-\tau \varphi},
\]
\[
\tilde{u}_{tt} = (v_{tt} - 2 \tau \varphi_t v_t + \tau^2 |\varphi_t|^2 v - \tau \varphi_{tt}) e^{-\tau \varphi}.
\]
For the spatial derivatives, we have:
\[
\begin{aligned}
\partial_j \left( g^{ij} \partial_i \tilde{u} \right) &= \partial_j \left( g^{ij} v_i e^{-\tau \varphi} \right) - \tau \partial_j \left( g^{ij} \varphi_i v e^{-\tau \varphi} \right)\\
&= \partial_j \left( g^{ij} v_i \right) e^{-\tau \varphi} - 2 \tau \langle \varphi_x, v_x \rangle_g e^{-\tau \varphi} - \tau \left( \partial_j \left( g^{ij} \varphi_i \right) \right) v e^{-\tau \varphi} + \tau^2 |\varphi_x|_g^2 v e^{-\tau \varphi}.
\end{aligned}
\]
Thus, we find:
\[
P_\tau v = P_{+} v + P_{-} v,
\]
where
\[
P_{+} v = v_{tt} - \partial_j (g^{ij} \partial_i v) + \tau^2 \Phi v, \quad \Phi = \varphi_t^2 - |\varphi_x|_g^2,
\]
\[
P_{-} v = 2 \tau \left( \langle \varphi_x, v_x \rangle_g - \varphi_t v_t \right) + \tau \Psi v, \quad \Psi = \partial_j (g^{ij} \varphi_i) - \varphi_{tt}.
\]

Since $|P_\tau v|^2 = |P_{+} v|^2 + 2 (P_{+} v)(P_{-} v) + |P_{-} v|^2 \geq |P_{+} v|^2 + 2 (P_{+} v)(P_{-} v)$, we need to estimate the two terms on the right-hand side from below. Let’s first compute the derivatives of $\varphi$ (where $\delta_{ij}$ represents the Kronecker delta):
\begin{equation}\label{PROOF-CARLEMAN-WAVE-values-of-derivative}
\begin{aligned}
\varphi_t = -2 \alpha t, \quad \varphi_{tt} &= -2 \alpha, \quad \varphi_i = \delta_{in} \left( -2 (R - x^n) \right), \quad \varphi_{ij} = 2 \delta_{in} \delta_{jn},\\
&\Phi = 4 (\alpha^2 t^2 - (R - x^n)^2), \quad \Psi = 2 + 2 \alpha.
\end{aligned}
\end{equation}
With this, now we can estimate both terms.\\

\noindent$\mathbf{\textbf{Estimate for} \, \left|P_{+} v\right|^2.}$ We see that in the set $\{\varphi>0\}$ we have
\[
\left(R-x^n\right)^2-\alpha t^2>r^2.
\]
Thus, using \eqref{PROOF-CARLEMAN-WAVE-values-of-derivative} and choosing $\alpha \in(0,1)$, we get
\[
-\Phi \geq 4\left(\left(R-x^n\right)^2-\alpha t^2\right)>4 r^2.
\]
Moreover, for any $\tilde{\gamma} > 0$, since $\frac{1}{4 r^2} > \frac{1}{-\Phi}$, if we multiply by $\tilde{\gamma} / \tau_0$ it is clear that for any $r > 0$ we can find $\tau_0 > 0$ such that for all $\tau \geq \tau_0$,
\[
0 < \frac{-\tilde{\gamma}}{\Phi \tau} < \frac{-\tilde{\gamma}}{\Phi \tau_0} \leq \frac{\tilde{\gamma}}{4 r^2 \tau_0} < 1.
\]
Then,
\[
\begin{aligned}
\left|P_{+} v\right|^2 &\geq \frac{-\tilde{\gamma}}{\Phi \tau} \left|P_{+} v\right|^2 = \frac{-\tilde{\gamma}}{\Phi \tau} \left|v_{tt} - \partial_j \left( g^{ij} \partial_i v \right) + \tau^2 \Phi v\right|^2 \\
&= \frac{-\tilde{\gamma}}{\Phi \tau} \left|v_{tt} - \partial_j \left( g^{ij} \partial_i v \right)\right|^2 - \tilde{\gamma} \tau^3 \Phi |v|^2 + 2 \tau \tilde{\gamma} \left( \partial_j \left( g^{ij} \partial_i v \right) - v_{tt} \right) v.
\end{aligned}
\]
Because,
\[
\left( \partial_j \left( g^{ij} \partial_i v \right) - v_{tt} \right) v = \operatorname{div}_x \left( v \left( \nabla_g v \right) \right) - \left|v_x\right|_g^2 - \partial_t \left( v v_t \right) + \left|v_t\right|^2,
\]
we end up with the estimate
\[
\begin{aligned}
\left|P_{+} v\right|^2 &\geq \frac{-\tilde{\gamma}}{\Phi \tau} \left|v_{tt} - \partial_j \left( g^{ij} \partial_i v \right)\right|^2 - \tilde{\gamma} \tau^3 \Phi |v|^2 + 2 \tau \tilde{\gamma} \left( \left|v_t\right|^2 - \left|v_x\right|_g^2 \right) \\
&\quad + 2 \tau \tilde{\gamma} \left( \operatorname{div}_x \left( v \left( \nabla_g v \right) \right) - \partial_t \left( v v_t \right) \right) \\
&\geq 2 \tau \tilde{\gamma} \left( \left|v_t\right|^2 - \left|v_x\right|_g^2 \right) + 2 \tau \tilde{\gamma} \left( \operatorname{div}_x \left( v \left( \nabla_g v \right) \right) - \partial_t \left( v v_t \right) \right).
\end{aligned}
\]

\noindent $\mathbf{\textbf{Estimate of} \, \, 2\left(P_{+} v\right)\left(P_{-} v\right)}$. We split the computations into 3 parts:

\[
\begin{aligned}
2\left(P_{+} v\right)\left(P_{-} v\right)&= 2\left(v_{t t}-\partial_j\left(g^{i j} \partial_i v\right)+\tau^2 \Phi v\right)\left(2 \tau\left(\left\langle\varphi_x, v_x\right\rangle_g-\varphi_t v_t\right)+\tau \Psi v\right) \\
& =I_1+I_2+I_3 .
\end{aligned}
\]
Expanding these terms we obtain the following. For $I_1$ we have
\[
\begin{aligned}
I_1= & 4 \tau\left\langle\varphi_x, v_x\right\rangle_g v_{t t}-4 \tau \varphi_t v_t v_{t t}+2 \tau \Psi v v_{t t} \\
= & \partial_t\left(4 \tau\left\langle\varphi_x, v_x\right\rangle_g v_t-2 \tau \varphi_t\left|v_t\right|^2+2 \tau \Psi v v_t\right) \\
-4&  \tau\left\langle\varphi_x, v_{t x}\right\rangle_g v_t+2 \tau \varphi_{t t}\left|v_t\right|^2-2 \tau \Psi_t v v_t-2 \tau \Psi\left|v_t\right|^2 \\
= & \partial_t\left(4 \tau\left\langle\varphi_x, v_x\right\rangle_g v_t-2 \tau \varphi_t\left|v_t\right|^2+2 \tau \Psi v v_t\right)\\
+& \operatorname{div}_x\left(-2 \tau\left|v_t\right|^2\left(\nabla_g \varphi\right)\right)+2 \tau \varphi_{t t}\left|v_t\right|^2- 2\tau\Psi_tvv_t -2 \tau \Psi\left|v_t\right|^2\\
= & \partial_t\left(4 \tau\left\langle\varphi_x, v_x\right\rangle_g v_t-2 \tau \varphi_t\left|v_t\right|^2+2 \tau \Psi v v_t\right)\\
+& \operatorname{div}_x\left(-2 \tau\left|v_t\right|^2\left(\nabla_g \varphi\right)\right)+2 \tau \varphi_{t t}\left|v_t\right|^2 -2 \tau \Psi\left|v_t\right|^2.
\end{aligned}
\]
For the term $I_2$ we get
\[
\begin{aligned} 
I_2  = &-4 \tau\left(\partial_j\left(g^{i j} \partial_i v\right)\right)\left\langle\varphi_x, v_x\right\rangle_g+4 \tau\left(\partial_j\left(g^{i j} \partial_i v\right)\right) \varphi_t v_t-2 \tau \Psi\left(\partial_j\left(g^{i j} \partial_i v\right)\right) v \\  = &\operatorname{div}_x\left(-4 \tau\left\langle\varphi_x, v_x\right\rangle_g\left(\nabla_g v\right)\right)+4 \tau\left\langle\nabla_g v, \nabla\left\langle\varphi_x, v_x\right\rangle_g\right\rangle\\
+ &\operatorname{div}_x\left(4 \tau \varphi_t v_t \nabla_g v\right)-4 \tau v_t\left\langle\nabla_g v, \nabla \varphi_t\right\langle - 4 \tau \varphi_t\left\langle\nabla_g v, \nabla v_t\right\rangle\\
+&\operatorname{div}_x\left(-2 \tau \Psi v\left(\nabla_g v\right)\right)+2 \tau|\nabla v|_g^2 \Psi+2 \tau v\left\langle\nabla_g v,\nabla\Psi\right\rangle\\
= & \operatorname{div}_x\left(-4 \tau\left\langle\varphi_x, v_x\right\rangle_g\left(\nabla_g v\right)\right)+4 \tau\left\langle\nabla_g v, \nabla\left\langle\varphi_x, v_x\right\rangle_g\right\rangle \\  +&\operatorname{div}_x\left(4 \tau \varphi_t v_t \nabla_g v\right)+\partial_t\left(-2 \tau\left|v_x\right|_g^2 \varphi_t\right)+2 \tau \varphi_{t t}\left|v_x\right|_g^2 \\
+ &\operatorname{div}_x\left(-2 \tau \Psi v\left(\nabla_g v\right)\right)+2 \tau\left|v_x\right|_g^2 \Psi,
\end{aligned}
\]
thus, expanding 
\begin{align*}
\left\langle\nabla_g v, \nabla\left\langle\varphi_x, v_x\right\rangle_g\right\rangle &=\left(g^{k l} v_k\right) \cdot \partial_l\left(g^{i j} \varphi_i v_j\right)\\
&= \left(g^{k l} v_k\right)\left(g_{\ell}^{i j} \varphi_i v_j\right) +\left(g^{k l} v_k\right)\left(g^{i j} v_j\right) \varphi_{i l}+\left(g^{k l} v_k\right)\left(g^{i j} \varphi_i\right) v_{j l}\\
&= \varphi_{x x}\left(\nabla_g v, \nabla_g v\right)+\frac{1}{2}\left\{\operatorname{div}_x\left(\left|v_x\right|_g^2\left(\nabla_g \varphi\right)\right)-\left(g_j^{k l} v_k v_l\right)\left(\nabla_g \varphi\right)\right. \\
&\left.-\left|v_x\right|_g^2\left(g_j^{i j} \varphi_i\right) -\left|v_x\right|_g^2\left(g^{i j} \varphi_{i j}\right)\right\},
\end{align*}
where there are two summands vanishing because $\varphi_i \neq 0$ only if $i=n$ and $\partial_k g^{n j}=0$ for all $j, k$, we obtain
\[
\begin{aligned}
I_2= & \operatorname{div}_x\left(-4 \tau\left\langle\varphi_x, v_x\right\rangle_g\left(\nabla_g v\right)+2 \tau\left|v_x\right|_g^2\left(\nabla_g \varphi\right)\right)+4 \tau \varphi_{x x}\left(\nabla_g v, \nabla_g v\right) \\
& -2 \tau\left\langle g_j^{k l} v_k v_l, \nabla_g \varphi\right\rangle-2 \tau\left|v_x\right|_g^2\left(g^{i j} \varphi_{i j}\right) \\
& +\operatorname{div}_x\left(4 \tau \varphi_t v_t \nabla_g v\right)+\partial_t\left(-2 \tau\left|v_x\right|_g^2 \varphi_t\right)+2 \tau \varphi_{t t}\left|v_x\right|_g^2 \\
& +\operatorname{div}_x\left(-2 \tau \Psi v\left(\nabla_g v\right)\right)+2 \tau\left|v_x\right|_g^2 \Psi.
\end{aligned}
\]
Finally, the term $I_3$ is such that
\[
\begin{aligned}
I_3&=4 \tau^3 \Phi v\left\langle\varphi_x, v_x\right\rangle_g-4 \tau^3 \Phi \varphi_t v v_t+2 \tau^3 \Phi \Psi|v|^2\\
&=\operatorname{div}_x\left(2 \tau^3 \Phi\left(\nabla_g \varphi\right)|v|^2\right)-2 \tau^3\left\langle\Phi_x, \nabla_g \varphi\right\rangle|v|^2-2 \tau^3 \Phi\left(\partial_j\left(g^{i j}\varphi_i\right)\right)|v|^2\\
&+\partial_t\left(-2 \tau^3 \Phi \varphi_t|v|^2\right)+2 \tau^3 \Phi_t \varphi_t|v|^2+2 \tau^3 \Phi \varphi_{t t}|v|^2 +2 \tau^3 \Phi \Psi|v|^2
\end{aligned}
\]
Putting all the above together, we get
\[
\begin{aligned}
2\left(P_{+} v\right)\left(P_{-} v\right)= & 2 \tau\left(\varphi_{t t}-\Psi\right)\left|v_t\right|^2 \\
& +4 \tau \varphi_{x x}\left(\nabla_g v, \nabla_g v\right)-2 \tau\left\langle g_j^{k l} v_k v_l, \nabla_g \varphi\right\rangle \\
& +2 \tau\left(-\left(g^{i j} \varphi_{i j}\right)+\varphi_{t t}+\Psi\right)\left|v_x\right|_g^2 \\
& +2 \tau^3\left(\Phi_t \varphi_t-\left\langle\Phi_x, \nabla_g \varphi\right\rangle\right)|v|^2 \\
& +\operatorname{div}_x(\tilde{Y})+\partial_t \tilde{Z},
\end{aligned}
\]
where
\[
\begin{aligned}
\tilde{Y}= & -2 \tau\left|v_t\right|^2\left(\nabla_g \varphi\right)-4 \tau\left\langle\varphi_x, v_x\right\rangle_g\left(\nabla_g v\right)+2 \tau\left|v_x\right|_g^2\left(\nabla_g \varphi\right)+4 \tau \varphi_t v_t\left(\nabla_g v\right) \\
& -2 \tau \Psi v\left(\nabla_g v\right)+2 \tau^3 \Phi|v|^2\left(\nabla_g \varphi\right) \\
\tilde{Z} & =4 \tau\left\langle\varphi_x, v_x\right\rangle_g v_t-2 \tau \varphi\left|v_t\right|^2+2 \tau \Psi v v_t-2 \tau\left|v_x\right|_g^2 \varphi_t-2 \tau^3 \Phi \varphi_t|v|^2.
\end{aligned}
\]
Using \eqref{PROOF-CARLEMAN-WAVE-values-of-derivative}, we see that 
\[
\begin{aligned}
& 2\left(P_{+} v\right)(P-v)=-4(1+2 \alpha) \tau\left|v_t\right|^2 \\
& \quad+8 \tau\left|v_n\right|^2+4\left(R-x^n\right) \tau\left(\frac{1}{2} \partial_n g^{k l} v_k v_l\right)-4 \tau(1-\alpha+(1+\alpha))\left|v_x\right|_g^2 \\
& \quad+32 \tau^3\left(\left(R-x^n\right)^2-\alpha^3 t\right)|v|^2  +\operatorname{div}_x(\tilde{Y})+\partial_t \tilde{Z}.
\end{aligned}
\]

Recalling the strict convexity of the level surfaces $\left\{x^n=s\right\}$ in \eqref{positive-definiteness-inequality}, by choosing $r=R-\delta$ in the definition of the weight function, we get that $x^n \in[0, \delta)$ inside $Q_0=Q \cap\{\varphi>0\}$. Therefore, by taking $\kappa=\min _{s \in[0, \delta]} \kappa_s$, we have that
\[
\frac{1}{2} \partial_n g^{k l} v_k v_l \geq \kappa\left|v_x\right|_{\tilde{g}}^2 \quad \text { in } Q_0.
\]
Now, choosing $\alpha \in(0,1)$, it follows that in $Q_0$
\[
\begin{aligned}
\left|P_\tau v\right|^2 \geq & 2 \tau(\tilde{\gamma}-(2+4 \alpha))\left|v_t\right|^2 \\
& +2 \tau(4-\tilde{\gamma})\left|v_n\right|^2+2 \tau\left(2\left(R-x^n\right)^2 \kappa-\tilde{\gamma}\right)\left|v_x\right|_{\tilde{g}}^2 \\
& +32 \tau^3 r^2|v|^2+\operatorname{div}_x(Y)+\partial_t Z,
\end{aligned}
\]
with
\[
Y=\tilde{Y}+2 \tau \tilde{\gamma} v\left(\nabla_g v\right), \quad Z=\tilde{Z}+2 \tau \tilde{\gamma} v v_t.
\]
We then choose $\tilde{\gamma}$ and $\alpha \in(0,1)$ such that
\[
4>\tilde{\gamma}>2+4 \alpha,
\]
and $R$ and $\delta$ so that in $Q_0$,
\[
\left(R-x^n\right)^2>(R-\delta)^2>\frac{\tilde{\gamma}}{2 \kappa}.
\]
In summary, and after integrating over $Q_0^y$, we obtain the desired estimate
\begin{equation*}
 C \tau \int_{Q_0^y}\left(\left|v_t\right|^2+\left|v_x\right|_g^2\right)+C \tau^3 \int_{Q_0^y}|v|^2 d x d t \leq \int_{Q_0^y}\left|P_\tau v\right|^2 d x d t-\int_{Q_0^y}\left(\operatorname{div}_x Y+\partial_t Z\right) d x d t.  \end{equation*}
 
\end{proof}
\bibliographystyle{acm}
\bibliography{bibliography}

@article {BENJAMIN-ACOSTA-THERMO,
    AUTHOR = {Acosta, Sebasti\'an and Palacios, Benjam\'in},
     TITLE = {Thermoacoustic tomography for an integro-differential wave
              equation modeling attenuation},
   JOURNAL = {J. Differential Equations},
  FJOURNAL = {Journal of Differential Equations},
    VOLUME = {264},
      YEAR = {2018},
    NUMBER = {3},
     PAGES = {1984--2010},
      ISSN = {0022-0396,1090-2732},
   MRCLASS = {35R30 (35R09)},
  MRNUMBER = {3721418},
MRREVIEWER = {Dmitry\ E.\ Pelinovsky},
       DOI = {10.1016/j.jde.2017.10.012},
       URL = {https://doi.org/10.1016/j.jde.2017.10.012},
}

@article {TATARU,
    AUTHOR = {Tataru, Daniel},
     TITLE = {Unique continuation for operators with partially analytic
              coefficients},
   JOURNAL = {J. Math. Pures Appl. (9)},
  FJOURNAL = {Journal de Math\'ematiques Pures et Appliqu\'ees. Neuvi\`eme
              S\'erie},
    VOLUME = {78},
      YEAR = {1999},
    NUMBER = {5},
     PAGES = {505--521},
      ISSN = {0021-7824},
   MRCLASS = {35A05 (35B60)},
  MRNUMBER = {1697040},
MRREVIEWER = {Masafumi\ Yoshino},
       DOI = {10.1016/S0021-7824(99)00016-1},
       URL = {https://doi.org/10.1016/S0021-7824(99)00016-1},
}

@incollection {HORMANDER-PRINCIPALLY-NORMAL,
    AUTHOR = {H\"ormander, Lars},
     TITLE = {On the uniqueness of the {C}auchy problem under partial
              analyticity assumptions},
 BOOKTITLE = {Geometrical optics and related topics ({C}ortona, 1996)},
    SERIES = {Progr. Nonlinear Differential Equations Appl.},
    VOLUME = {32},
     PAGES = {179--219},
 PUBLISHER = {Birkh\"auser Boston, Boston, MA},
      YEAR = {1997},
      ISBN = {0-8176-3958-6},
   MRCLASS = {35L10 (35A07)},
  MRNUMBER = {2033496},
}

@article {UHLMANN,
    AUTHOR = {Bukhgeim, A. L. and Dyatlov, G. V. and Uhlmann, G.},
     TITLE = {Unique continuation for hyperbolic equations with memory},
   JOURNAL = {J. Inverse Ill-Posed Probl.},
  FJOURNAL = {Journal of Inverse and Ill-Posed Problems},
    VOLUME = {15},
      YEAR = {2007},
    NUMBER = {6},
     PAGES = {587--598},
      ISSN = {0928-0219,1569-3945},
   MRCLASS = {35L20 (35B45 35B60 49J20 93B05)},
  MRNUMBER = {2367866},
       DOI = {10.1515/jiip.2007.032},
       URL = {https://doi.org/10.1515/jiip.2007.032},
}

@book{hormander1994,
  author    = {Lars H\"ormander},
  title     = {The Analysis of Linear Partial Differential Operators. IV},
  volume    = {275},
  series    = {Grundlehren der Mathematischen Wissenschaften},
  publisher = {Springer-Verlag},
  address   = {Berlin},
  year      = {1994},
  note      = {Fourier integral operators, Corrected reprint of the 1985 original}
}

@article {MR2519863,
    AUTHOR = {Stefanov, Plamen and Uhlmann, Gunther},
     TITLE = {Thermoacoustic tomography with variable sound speed},
   JOURNAL = {Inverse Problems},
  FJOURNAL = {Inverse Problems. An International Journal on the Theory and
              Practice of Inverse Problems, Inverse Methods and Computerized
              Inversion of Data},
    VOLUME = {25},
      YEAR = {2009},
    NUMBER = {7},
     PAGES = {075011, 16},
      ISSN = {0266-5611,1361-6420},
   MRCLASS = {35R30 (76Q05)},
  MRNUMBER = {2519863},
MRREVIEWER = {Aleksander\ Denisiuk},
       DOI = {10.1088/0266-5611/25/7/075011},
       URL = {https://doi.org/10.1088/0266-5611/25/7/075011},
}

@article{StefanovUhlmann2011,
  author = {P. Stefanov and G. Uhlmann},
  title = {Thermoacoustic Tomography Arising in Brain Imaging},
  journal = {Inverse Problems},
  volume = {27},
  year = {2011},
}

@article{Palacios2022,
  author = {B. Palacios},
  title = {Photoacoustic Tomography in Attenuating Media with Partial Data},
  journal = {Inverse Problems and Imaging},
  volume = {16},
  year = {2022},
}

@article{nicaise2020wellposedness,
  title={Wellposedness and long time behavior for a general class of Moore-Gibson-Thompson equations},
  author={Nicaise, Serge and Bounadja, Hizia},
  journal={Control and Cybernetics},
  volume={49},
  year={2020}
}

@article{kuznetsov1971equation,
  title={Equation of nonlinear acoustics},
  author={Kuznetsov, VP},
  journal={Sov. Phys. Acoust.},
  volume={16},
  pages={467--470},
  year={1971}
}

@article{westervelt1963parametric,
  title={Parametric acoustic array},
  author={Westervelt, Peter J},
  journal={The Journal of the acoustical society of America},
  volume={35},
  number={4},
  pages={535--537},
  year={1963},
  publisher={Acoustical Society of America}
}

@book{hamilton2024nonlinear,
  title={Nonlinear acoustics},
  author={Hamilton, Mark F and Blackstock, David T},
  year={2024},
  publisher={Springer Nature}
}

@article{crighton1979model,
  title={Model equations of nonlinear acoustics},
  author={Crighton, David G},
  journal={Annual Review of Fluid Mechanics},
  volume={11},
  pages={11--33},
  year={1979}
}

@article{coulouvrat1992equations,
  title={On the equations of non linear acoustics},
  author={Coulouvrat, F},
  journal={Journal d'acoustique (Les Ulis)},
  volume={5},
  number={4},
  pages={321--359},
  year={1992}
}

@incollection{greenleaf1974algebraic,
  title={Algebraic reconstruction of spatial distributions of acoustic absorption within tissue from their two-dimensional acoustic projections},
  author={Greenleaf, James F and Johnson, Sahvanna A and Lee, So L and Hermant, GT and Woo, EH},
  booktitle={Acoustical Holography: Volume 5},
  pages={591--603},
  year={1974},
  publisher={Springer}
}

@article{dines1979ultrasonic,
  title={Ultrasonic attenuation tomography of soft tissues},
  author={Dines, KA and Kak, AC},
  journal={Ultrasonic Imaging},
  volume={1},
  number={1},
  pages={16--33},
  year={1979},
  publisher={Elsevier}
}

@article{kaltenbacher2011wellposedness,
  title={Wellposedness and exponential decay rates for the Moore-Gibson-Thompson equation arising in high intensity ultrasound},
  author={Kaltenbacher, Barbara and Lasiecka, Irena and Marchand, Richard},
  journal={Control and Cybernetics},
  volume={40},
  number={4},
  pages={971--988},
  year={2011},
  publisher={Polska Akademia Nauk. Instytut Bada{\'n} Systemowych PAN}
}

@article{bjornum1986,
  author = {Bjørnum, E.},
  title = {Thermoviscous relaxation in nonlinear acoustic propagation},
  journal = {Wave Motion},
  volume = {8},
  pages = {375--386},
  year = {1986}
}

@article{cain1986,
  author = {Cain, C. A.},
  title = {Finite amplitude acoustic propagation in biological media},
  journal = {IEEE Transactions on Sonics and Ultrasonics},
  volume = {33},
  number = {6},
  pages = {594--601},
  year = {1986}
}

@article{ichida1983,
  author = {Ichida, M. and Sato, T. and Sato, S.},
  title = {Nonlinear ultrasound propagation in biological media},
  journal = {Ultrasonics},
  volume = {21},
  pages = {239--245},
  year = {1983}
}

@article{burov1994,
  author = {Burov, V. A. and Zotov, D. I. and Rumyantseva, O. D.},
  title = {Reconstruction of sound speed and attenuation distributions using nonlinear acoustic tomography},
  journal = {Acoustical Physics},
  volume = {40},
  number = {6},
  pages = {816--823},
  year = {1994}
}

@article{gemmeke2016,
  author = {Gemmeke, H. and others},
  title = {Model-based ultrasound imaging using nonlinear acoustic propagation models},
  journal = {IEEE Transactions on Ultrasonics, Ferroelectrics, and Frequency Control},
  volume = {63},
  number = {8},
  pages = {1182--1193},
  year = {2016}
}

@article{alaskar2021,
  author = {Alaskar, A. and others},
  title = {Nonlinear ultrasound tomography with attenuation reconstruction},
  journal = {Inverse Problems},
  volume = {37},
  number = {10},
  pages = {105007},
  year = {2021}
}

@article{javaherian2020,
  author = {Javaherian, A. and others},
  title = {Nonlinear ultrasound tomography based on full-wave acoustic models},
  journal = {IEEE Transactions on Medical Imaging},
  volume = {39},
  number = {7},
  pages = {2245--2256},
  year = {2020}
}

@article{liu2013,
  author = {Liu, S. and Triggiani, R.},
  title = {An inverse problem for a third order PDE arising in high-intensity ultrasound: global uniqueness and stability by one boundary observation},
  journal = {Journal of Inverse and Ill-Posed Problems},
  volume = {21},
  number = {3},
  pages = {347--379},
  year = {2013}
}

@article{kaltenbacher2021,
  author = {Kaltenbacher, B. and Rundell, W.},
  title = {On the identification of nonlinear damping parameters in nonlinear acoustic models from boundary measurements},
  journal = {Inverse Problems and Imaging},
  volume = {15},
  pages = {865--891},
  year = {2021}
}

@article{kaltenbacher2022,
  author = {Kaltenbacher, B. and Rundell, W.},
  title = {Inverse problems for nonlinear acoustic equations with fractional damping},
  journal = {Mathematics of Computation},
  volume = {91},
  pages = {245--276},
  year = {2022}
}

@article{fuyaoyu2026,
  author = {Fu, S. and Yao, Y. and Yu, C.},
  title = {Inverse problems for the Moore--Gibson--Thompson equation with time-dependent viscosity and potential terms},
  journal = {Journal of Mathematical Analysis and Applications},
  year = {2026}
}

@article{kaltenbacher2022nikolic,
  author = {Kaltenbacher, B. and Nikolic, V.},
  title = {Inverse problems for relaxed third-order acoustic equations},
  journal = {Inverse Problems},
  year = {2022}
}

@article{acosta2025inverse,
  title={Inverse photoacoustic tomography problem in media with fractional attenuation},
  author={Acosta, Sebastian and Palacios, Benjamin},
  journal={arXiv preprint arXiv:2510.03408},
  year={2025}
}

@article{stefanov2013recovery,
  title={Recovery of a source term or a speed with one measurement and applications},
  author={Stefanov, Plamen and Uhlmann, Gunther},
  journal={Transactions of the American Mathematical Society},
  volume={365},
  number={11},
  pages={5737--5758},
  year={2013}
}

@article{huang2023determination,
  title={Determination of source or initial values for acoustic equations with a time-fractional attenuation},
  author={Huang, Xinchi and Kian, Yavar and Soccorsi, {\'E}ric and Yamamoto, Masahiro},
  journal={Analysis and Applications},
  volume={21},
  number={05},
  pages={1105--1130},
  year={2023},
  publisher={World Scientific}
}

@article{Homan2013,
title = {Multi-wave imaging in attenuating media},
journal = {Inverse Problems and Imaging},
volume = {7},
number = {4},
pages = {1235-1250},
year = {2013},
issn = {1930-8337},
doi = {10.3934/ipi.2013.7.1235},
url = {https://www.aimsciences.org/article/id/bab0adb9-ba16-4214-86da-5e9108cb0102},
author = {Andrew Homan}
}

@article{covi2026propagation,
  title={Propagation of singularities and inverse problems for the viscoacoustic wave equation},
  author={Covi, Giovanni and de Hoop, Maarten and Salo, Mikko},
  journal={arXiv preprint arXiv:2603.24497},
  year={2026}
}

@article{grebnev2026high,
  title={High frequency wave propagation for the viscoelastic wave equation with singular memory},
  author={Grebnev, Haim and Stefanov, Plamen},
  journal={arXiv preprint arXiv:2608.30138},
  year={2026}
}

@article{kian2026inverse,
  title={Inverse coefficient problem for the wave equation with nonlocal attenuation},
  author={Kian, Yavar and Rezig, Zouhour and Uhlmann, Gunther},
  journal={arXiv preprint arXiv:2609.19854},
  year={2026}
}

@article{yu2026uniqueness,
  title={Uniqueness of an Inverse Coefficient Problem for a Time-Fractional Damped Wave Equation from Boundary Measurements},
  author={Yu, Kai},
  journal={arXiv preprint arXiv:2606.01812},
  year={2026}
}

@book{GrigisSjostrand1994,
  author    = {Grigis, Alain and Sj{\"o}strand, Johannes},
  title     = {Microlocal Analysis for Differential Operators:
               An Introduction},
  series    = {London Mathematical Society Lecture Note Series},
  volume    = {196},
  publisher = {Cambridge University Press},
  year      = {1994}
}
\end{document}